\newtheorem{theoremABC}{Theorem}
\newtheorem{corABC}{Corollary}
\theoremstyle{definition}
 \newtheorem{defi}{Definition}[section]
\theoremstyle{remark}
 \newtheorem{remark}[defi]{Remark}
\theoremstyle{plain}
\newtheorem{theo}[defi]{Theorem}
\newtheorem{prop}[defi]{Proposition}
\newtheorem{propdef}[defi]{Proposition-Definition}
\newtheorem{lemma}[defi]{Lemma}
\newtheorem*{acknow}{Acknowledgments}
\def\e#1\e{\begin{equation}#1\end{equation}}
\def\ea#1\ea{\begin{align}#1\end{align}}
\numberwithin{equation}{section}
\newcommand{\dd}{\mathbb{D}}
\newcommand{\rr}{\mathbb{R}}
\newcommand{\zz}{\mathbb{Z}}
\newcommand{\ddhat}{\widehat{\mathbb{D}}}
\newcommand{\Z}[1]{\zz_{#1}}
\newcommand{\abs}[1]{\left\vert #1 \right\vert}
\newcommand{\red}{/\!\!/}
\newcommand{\Acal}{\mathcal{A}}
\newcommand{\Bcal}{\mathcal{B}}
\newcommand{\Ccal}{\mathcal{C}}
\newcommand{\Dcal}{\mathcal{D}}
\newcommand{\Gcal}{\mathcal{G}}
\newcommand{\Hcal}{\mathcal{H}}
\newcommand{\Mcal}{\mathcal{M}}
\newcommand{\Pcal}{\mathcal{P}}
\newcommand{\Scal}{\mathcal{S}}
\newcommand{\Ucal}{\mathcal{U}}
\newcommand{\Xfrak}{\mathfrak{X}}
\newcommand{\Xreg}{\mathfrak{X}^{\rm reg}}
\newcommand{\Lund}{\underline{L}}
\newcommand{\kund}{\underline{k}}
\newcommand{\lund}{\underline{l}}
\newcommand{\tund}{\underline{t}}
\newcommand{\kundb}{\underline{k}^{\flat}}
\newcommand{\kundl}{\underline{k}^{l}}
\newcommand{\kundr}{\underline{k}^{r}}
\newcommand{\gamund}{\underline{\gamma}}
\newcommand{\xund}{\underline{x}}
\newcommand{\One}{\underline{1}}
\newcommand{\Crit}{\mathrm{Crit}}
\newcommand{\ind}{\mathrm{ind}}
\renewcommand{\d}{\mathrm{d}}
\renewcommand{\Vert}{\mathrm{Vert}}
\newcommand{\Edges}{\mathrm{Edges}}
\newcommand{\Leaves}{\mathrm{Leaves}}
\newcommand{\Roots}{\mathrm{Roots}}
\newcommand{\Ainf}{$A_\infty$}
\newcommand{\Fuk}{\mathcal{F}uk}
\newcommand{\Lie}{\mathcal{L}ie}
\newcommand{\LieMon}{\mathcal{L}ie\mathcal{M}on}
\newcommand{\LieMonpert}{\mathcal{L}ie\mathcal{M}on^{\rm pert}}
\newcommand{\Lier}{\mathcal{L}ie_{\rr}}
\newcommand{\Man}{\mathcal{M}an}
\newcommand{\Manact}{\mathcal{M}an^{\circlearrowleft}}
\newcommand{\Don}{\mathcal{D}on}
\newcommand{\Symp}{\mathcal{S}ymp}
\newcommand{\Ham}{\mathcal{H}am}
\newcommand{\Hamhat}{\widehat{\mathcal{H}am}}
\newcommand{\Rmod}{R\text{-}mod}
\newcommand{\Rumod}{(R,u)\text{-}mod}
\newcommand{\Rdmod}{(R,d)\text{-}mod}
\newcommand{\Rudmod}{(R,ud)\text{-}mod}
\newcommand{\Rudubimod}{(R,ud,u)\text{-}bimod}
\newcommand{\Ruddbimod}{(R,ud,d)\text{-}bimod}
\newcommand{\fBialg}{f\text{-}\mathcal{B}ialg}
\newcommand{\fCoalg}{f\text{-}\mathcal{C}oalg}
\newcommand{\fAlg}{f\text{-}\mathcal{A}lg}
\newcommand{\uBimod}{u\text{-}\mathcal{B}imod}
\newcommand{\uBimodact}{(u\text{-}\mathcal{B}imod)^{\circlearrowleft}}
\newcommand{\dBimodact}{(d\text{-}\mathcal{B}imod)^{\circlearrowleft}}
\newcommand{\PreuBimodact}{(\Pcal re\text{-}u\text{-}\mathcal{B}imod)^{\circlearrowleft}}
\newcommand{\PreBialg}{\mathcal{P}re\mathcal{B}ialg}
\newcommand{\PreAlg}{\mathcal{P}re\mathcal{A}lg}
\newcommand{\PreCoalg}{\mathcal{P}re\mathcal{C}oalg}
\newcommand{\Ch}{\mathcal{C}h}
\newcommand{\dCat}{d\text{-}\mathcal{C}at}
\newcommand{\Cob}{\mathcal{C}ob}
\newcommand{\WW}{Wehrheim and Woodward}
\title[The Morse complex is an $\infty$-functor]{The Morse complex is an $\infty$-functor}
\author{Guillem Cazassus}
\address{The Chinese University of Hong Kong}
\email{gcazassus@proton.me}
\thanks{GC was funded by EPSRC grant reference EP/T012749/1 and the Simons Collaboration grant no. 994320.}
\begin{document}

\begin{abstract}
We show that the Morse complex of a compact Lie monoid can be given the structure of an $f$-bialgebra, a chain-level version of bialgebras introduced in \cite{biass}; and that this assignment defines an $\infty$-functor. As a consequence, we obtain two other $\infty$-functors mapping closed smooth manifolds to their Morse complexes with their \Ainf-coalgebra structures; and closed smooth manifolds with compact Lie group actions to their Morse complexes, with a ``$u$-bimodule'' structure (a bimodule version for $f$-bialgebras).
\end{abstract}

\maketitle
\tableofcontents

\newpage

\section{Introduction}
\label{sec:intro}
In \cite{biass}, with Alexander Hock and Thibaut Mazuir we introduced the notion of $f$-bialgebras, a homotopy version of bialgebras; which we conjectured to appear in Morse and Floer theory, and to be suitable for describing the effect of a compact Lie group action on Fukaya categories. 
In this paper we focus on Morse theory.

\begin{defi}\label{def:LieMon} Let $\LieMon$ stand for the category of compact Lie monoids:
\begin{itemize}
\item  objects are closed smooth manifolds with components of possibly different dimensions, endowed with a smooth associative  product,
\item morphisms are smooth maps preserving the product.
\end{itemize}

\end{defi}

Homology groups define a functor from $\LieMon$ to the category of bialgebras. The $f$-bialgebras introduced in \cite{biass} also form a category. One might expect that Morse complexes define a functor to this category, but this is too much to ask: functoriality only holds up to homotopy.

In this paper we show that $f$-bialgebras also form an $(\infty,1)$-category $\fBialg$ (see Proposition-Definition~\ref{propdef:fBialg}). Our main result can be stated as follows:

\begin{theoremABC}
\label{th:LieMon_to_fBialg} Given some \emph{coherent choices of perturbations} (which exist), there exists an $\infty$-functor  $CM_* \colon \LieMon \to \fBialg$ that maps a compact Lie monoid (resp. a smooth morphism) to its Morse complex (resp. its Morse pushforward). 
Dually, there exists a contravariant cochain $\infty$-functor  $CM^* \colon \LieMon \to \fBialg$.
\end{theoremABC}

We will use weak Kan complexes as models for $(\infty,1)$-categories, and identify an ordinary category $\Ccal$ with its simplicial nerve $N(\Ccal)$. We will actually define the above mentioned $\infty$-functor on a larger weak Kan complex $\LieMonpert$ that contains all perturbation data, and projects to $\LieMon $ by forgetting these. We will define a ``coherent choice of perturbations'' to be a section $\LieMon \to \LieMonpert$ of the forgetful functor $\LieMonpert \to \LieMon$.

As a consequence of Theorem~\ref{th:LieMon_to_fBialg}, we obtain a statement for compact Lie group actions on closed smooth manifolds, which can be seen as a functorial version of \cite[Conj.~A]{biass}.

\begin{defi}\label{def:Manact} Let $\Manact$ stand for the category whose:

\noindent\begin{minipage}{.2\textwidth}\begin{tikzcd}
   G \ar{r}{\varphi}& G' \\
   X \ar{r}{f} \ar[,loop ,out=123,in=57,distance=2.5em]{}{} \ar[,loop ,out=-123,in=-57,distance=2.5em]{}{}& X' \ar[,loop ,out=123,in=57,distance=2.5em]{}{} \ar[,loop ,out=-123,in=-57,distance=2.5em]{}{}\\
   H \ar{r}{\psi}& H'
\end{tikzcd}
\end{minipage}
\begin{minipage}{.7\textwidth}
\begin{itemize}
\item objects consist in triples $(G,X,H)$, where $X$ is a closed smooth manifold, on which a compact Lie group $G$ (resp. $H$) acts on the left (resp. right).
\item morphisms are triples $(\varphi, f, \psi)$ as in the diagram: $\varphi,  \psi$ are group morphisms, and $f$ is a smooth map which is bi-equivariant through $(\varphi,  \psi)$.
\end{itemize}
\end{minipage}

\end{defi}
Observe that there is a functor  
\ea\label{eq:Manact_to_LieMon}
\Manact &\to \LieMon \\
(G,X,H) &\mapsto G \sqcup X \sqcup H \sqcup \left\lbrace pt \right\rbrace, \nonumber
\ea
where the product on $G \sqcup X \sqcup H \sqcup \left\lbrace pt \right\rbrace$ is given by:
\e
x\cdot y = \begin{cases} x\cdot y &\text{if defined,}\\
pt &\text{otherwise.}
\end{cases}
\e
In Definition~\ref{def:uBimodact} we will introduce another $(\infty, 1)$-category $\uBimodact$ which, loosely speaking, is to $\fBialg$ what $\Manact$ is to $\LieMon$; as well as a dual analog $\dBimodact$. We then obtain:

\begin{corABC}\label{cor:Manact_to_uBimodact} Given some coherent choices of perturbations (which exist), there exists an $\infty$-functor  $CM_*^{\circlearrowleft} \colon \Manact \to \uBimodact$. 
Dually, there exists a contravariant cochain $\infty$-functor  $CM^{*,\circlearrowleft} \colon \Manact \to \dBimodact$.
\end{corABC}

Concretely, the content of this Corollary is:
\begin{itemize}
\item to a triple $T=(G,X,H)$, corresponds a triple 
\e
(A,M,B) = (CM_*(G),CM_*(X), CM_*(H)) = CM_*(T),
\e 
where $A, B$ are $f$-bialgebras, and $M$ is an $(A,B)$ $u$-bimodule in the sense of \cite[Def.~3.13]{biass}, i.e. a bimodule w.r.t. the ascending multiplicative structure.
\item to a morphism of triples $\Phi_{01}\colon T_1 \to T_0$ corresponds a morphism of $u$-bimodules  $(\Phi_{01})_*$ in the sense of \cite[Def.~3.15]{biass},
\item these pushforwards are functorial up to homotopy:
\e
(\Phi_{01} \circ \Phi_{12})_* = (\Phi_{01})_* \circ (\Phi_{12})_* + \d H_{012} +  H_{012} \d
\e
\item such homotopies, in their turn, satisfy some coherence relations up to some higher homotopies,
\item etc... 
\end{itemize}

Forgetting one multiplicative structure, we construct two variations $\fAlg, \fCoalg$ of $\fBialg$ that can be viewed respectively as $\infty$-categories of \Ainf-algebras and \Ainf-coalgebras. With $\Man$ the category of closed smooth manifolds:

\begin{corABC}[]\label{cor:fun_Man}

Given some coherent choices of perturbations, there exists an $\infty$-functor  $CM_* \colon \Man \to \fCoalg$ that maps a closed smooth manifold (resp. a smooth map) to its Morse complex (resp. its Morse pushforward). 
Dually, there exists  a similar $\infty$-functor  $CM^* \colon \Man \to \fAlg$ corresponding to Morse cochains.

\end{corABC}

Both $\fCoalg$ and $\fAlg$ come with forgetful functors  to the $\infty$-category of chain complexes $N_{dg}(\Ch_{\Rmod})$. Therefore, in particular:
\begin{corABC}[]\label{cor:fun_Man_Kom} Morse chains and cochains provide $\infty$-functors $\Man \to N_{dg}(\Ch_{\Rmod})$.
\end{corABC}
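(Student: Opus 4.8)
The plan is to obtain Corollary~\ref{cor:fun_Man_Kom} by post-composing the $\infty$-functors of Theorem~\ref{th:fun_Man} with the forgetful $\infty$-functors to chain complexes alluded to just above the statement. An $\infty$-functor between weak Kan complexes is nothing but a map of simplicial sets, composites of such maps are again $\infty$-functors, and $N_{dg}(\Ch_{\Rmod})$ is well known to be a weak Kan complex (it is the dg-nerve of a dg-category). Hence it suffices to construct two simplicial maps
\[
U^{\mathrm{co}}\colon \fCoalg \longrightarrow N_{dg}(\Ch_{\Rmod}),
\qquad
U^{\mathrm{alg}}\colon \fAlg \longrightarrow N_{dg}(\Ch_{\Rmod}),
\]
and then define the functors claimed in the corollary as $U^{\mathrm{co}}\circ CM_*$ and $U^{\mathrm{alg}}\circ CM^*$ on $\Man$.

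To build $U^{\mathrm{co}}$ (the case of $U^{\mathrm{alg}}$ being dual), I would unwind the definition of $\fCoalg$: an $n$-simplex assigns to each vertex $i\in\{0,\dots,n\}$ an \Ainf-coalgebra with underlying graded module $C_i$ whose arity-one cooperation is a differential $\partial_i$ with $\partial_i^2=0$, and to each non-degenerate face $i_0<\dots<i_k$ a component of a homotopy-coherent tower of \Ainf-(co)morphisms, whose \emph{linear part} $f_{i_0\cdots i_k}\colon C_{i_0}\to C_{i_k}$ is an $R$-module map of degree $k-1$. Discarding from the \Ainf-relations every cooperation of arity $\geq 2$ leaves precisely the identities defining an $n$-simplex of the dg-nerve of $\Ch_{\Rmod}$, namely, schematically,
\[
\partial_{i_k}\circ f_{i_0\cdots i_k}\;\pm\;f_{i_0\cdots i_k}\circ\partial_{i_0}
\;=\;\sum_{0<j<k}\pm\Bigl(f_{i_j\cdots i_k}\circ f_{i_0\cdots i_j}\;-\;f_{i_0\cdots \widehat{i_j}\cdots i_k}\Bigr).
\]
Assigning to the given $n$-simplex the data $\bigl((C_i,\partial_i)_i,\,(f_{i_0\cdots i_k})\bigr)$ visibly commutes with face and degeneracy maps, hence defines the simplicial map $U^{\mathrm{co}}$; the same recipe, dualized (cooperations replaced by operations, linear parts of \Ainf-morphisms taken in the opposite direction of composition as dictated by the definition of $\fAlg$), yields $U^{\mathrm{alg}}$.

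Unwinding the composite, the resulting $\infty$-functor $\Man\to N_{dg}(\Ch_{\Rmod})$ sends a closed manifold $M$ to its Morse complex $\bigl(CM(M),\partial\bigr)$, a smooth map to the linear part of its Morse pushforward, and higher simplices to the coherent chain homotopies witnessing the (weak) functoriality of the pushforward; the cochain version is obtained from $CM^*$ by the dual construction. The only genuinely non-formal point is the bookkeeping in the previous paragraph — matching signs and degrees of the linear parts of \Ainf-morphisms with the dg-nerve conventions — and this is the step I expect to need the most care; everything else (well-definedness on the relevant weak Kan complexes, the simplicial identities, and the behaviour on objects and $1$-morphisms) is immediate from Theorem~\ref{th:fun_Man} together with the constructions of the preceding sections.
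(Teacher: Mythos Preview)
Your approach is essentially the paper's: deduce the corollary by composing the $\infty$-functors of Theorem~\ref{th:fun_Man} with forgetful $\infty$-functors $\fCoalg,\fAlg \to N_{dg}(\Ch_{\Rmod})$. The paper packages your construction of $U^{\mathrm{co}}$ and $U^{\mathrm{alg}}$ slightly more cleanly: since $\fCoalg \subset N_{dg}(\Ch_{\Rdmod})$ and $\fAlg \subset N_{dg}(\Ch_{\Rumod})$ by definition, the forgetful functors of graded linear categories $\Rdmod \to \Rmod$ and $\Rumod \to \Rmod$ (projection onto the $\lund=(1)$, resp.\ $\kund=(1)$, component) induce dg-functors on chain complexes and hence simplicial maps on dg-nerves automatically, so the sign and degree bookkeeping you flag as the delicate point is absorbed into the functoriality of the dg-nerve construction.
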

To summarize, one has the following diagram:
\begin{center}
\begin{tikzcd}[column sep=tiny]
\LieMon  \ar[rrr, bend left=20, "CM_*"'] \arrow[rrr, bend right=15, "CM^*"] & \
 & 
  & \fBialg 
   & \\
\Manact \ar[rr, "CM_*^{\circlearrowleft}"]\ar[u]
&
 & \uBimodact  \ar[from=d, crossing over]\ar[ur]
  & 
   & \dBimodact , \ar[from=llll, crossing over, bend right=15, "CM^{*,\circlearrowleft}"'] \ar[ul]\\
\Man \ar[rr, "CM_*"] \ar[u]
&
 & \fCoalg  \ar[dr]
  & 
   & \fAlg ,\ar[dl] \ar[u] \ar[from=llll, crossing over, bend right=15, "CM^*"'] \\
&
 & 
   & N_{dg}(\Ch_{\Rmod}) 
    &
\end{tikzcd}
\end{center}

Finally, we believe that the functor from Corollary~\ref{cor:Manact_to_uBimodact} admits a categorification, and that such categorification should be part of a Morse theory counterpart to Donaldson-Floer theory and extended TFTs. Indeed, the objects of $\Manact$ correspond to the 1-morphisms  of a smooth (i.e. non-symplectic) version of a Moore-Tachikawa (partial) category $\Lier$ introduced in \cite{ham}. We believe that the functor should be categorified to a (partial) $(\infty,2)$-functor for this category, and should also have a Fukaya category $(\infty,2)$-counterpart. We will discuss this in more detail in Section~\ref{sec:rel_other_appr}, and refer to \cite{ham} and \cite[Sec.~1.2]{biass} for more context.

\subsection{Related work}\label{ssec:rel_work}

Grafted flowtrees also appear in Mazuir's work \cite{Mazuir1,Mazuir2}, who attempted to define an $\infty$-category of \Ainf algebras \cite[Sec.~2.4]{Mazuir2}. A construction of such an $\infty$-category is also given in \cite{OhTanaka_quotients_localizations}, who apply this construction to construct group actions on wrapped Fukaya categories \cite{OhTanaka_actions}, following \cite{Teleman_icm}.

Related constructions appear in \cite{HLSfinite} \cite{HLSlie} for defining equivariant Lagrangian Floer homology. More recently, \cite{PascaleffSibilla} suggest construction of $\infty$-category versions of ``Fukaya-type'' categories, via shifted symplectic geometry.

On the combinatorics side, \cite{PilaudPoliakova_Hochschild} introduce polytopes corresponding to ``$m$-painted $n$-trees'', which might correspond to our $J(m)_n$.

\subsection{Organization of the paper}\label{ssec:org_paper}

We first gather some background notions about Morse theory, infinity categories and $f$-bialgebras  in Section~\ref{sec:background}. We then give an informal outline of the constructions in Section~\ref{sec:informal_outline}. To convey the main ideas in the simplest setting, we start by outlining the proof of Corollary~\ref{cor:fun_Man_Kom}, and then explain how to elaborate on this proof and prove our other results.

In Section~\ref{sec:higher_bimultipl} we introduce the ``$n$-grafted bimultiplihedron'': a moduli space of abstract $n$-grafted biforests, which generalizes the constructions in \cite{biass}, and construct their ``partial compactification''. These will play the role of parameter spaces to construct our structures, analogously to the associahedron for the Fukaya category.

In Section~\ref{sec:three_wKan} we construct the target $\infty$-categories $\fBialg$, $\uBimodact$, $\dBimodact$  $\fAlg$ and $\fCoalg$, as subcategories of $dg$ nerves of chains of ``foresty module categories'' that we introduce.

In Section~\ref{sec:tautol_fam_pert} we construct tautological families of grafted graphs over the $n$-grafted bimultiplihedron, which allows us to set the perturbation scheme we will use.

In Section~\ref{sec:Moduli_spaces_Lie} we construct the moduli spaces of $n$-grafted flowgraphs, and use them to construct the functor of Theorem~\ref{th:LieMon_to_fBialg}. We then explain how to prove Corollaries~\ref{cor:Manact_to_uBimodact} and \ref{cor:fun_Man}.

In Section~\ref{sec:rel_other_appr} we explain in more detail the $(\infty,2)$-functor outlined above, relations with extended TFTs, and with constructions from \WW, Ma'u and Bottman.

\begin{acknow}We thank Dominic Joyce, Guillaume Laplante-Anfossi, Thibaut Mazuir, Alex Ritter and  Bruno Vallette for helpful conversations.
\end{acknow}

\section{Some background}
\label{sec:background}
Here we quickly review some standard material, and set our notations and conventions.

\subsection{Morse theory}
\label{ssec:Morse_background}

We briefly recall the Morse complex and the Morse pushforwards of smooth maps, mostly borrowing material from \cite{equiv}. We refer to \cite{AudinDamian} and \cite[Section~2.8]{KMbook} for more details. 
Let $X$ be a compact smooth manifold of dimension $n$, and $f\colon X\to \rr$ a Morse function. Each critical point $x$ has a Morse index $\ind(x)$. Denote respectively the set of critical points and index $k$ critical points by $\Crit(f)$ and $\Crit_k(f)$.

\begin{defi}\label{def:pseudo_grad} A \emph{pseudo-gradient} for $f$ is a vector field $V\in\mathfrak{X}(X)$ on $X$ such that for all $x\in X\setminus \Crit(f)$, $d_xf. V<0$; and such that in a Morse chart near a critical point, $V$ is the negative gradient of $f$ for the standard metric on $\rr^n$. Denote by $\mathfrak{X}(X,f)\subset\mathfrak{X}(X)$ the space of pseudo-gradients for $f$. This is a convex (hence contractible) space.
\end{defi}

We will usually fix a pseudo-gradient on $X$ and leave it implicit in the notations: we will denote $\phi_X^t$ the time $t$ flow of $V$.

\begin{defi}\label{def:stable_unstable_mfd}Let $x\in \Crit_k(f)$   and  $V\in \mathfrak{X}(X,f)$. Define the \emph{stable} (resp.  \emph{unstable}) submanifold of $x$ by:
\ea
S_x &= \left\lbrace y\in X :\lim_{t\to +\infty} \phi_X^t(y)=x \right\rbrace,\\
U_x &= \left\lbrace y\in X :\lim_{t\to -\infty} \phi_X^t(y)=x \right\rbrace .
\ea
The subsets $S_x$ and $U_x$ are smooth (non-proper) submanifolds diffeomorphic respectively to $\rr^k$ and $\rr^{n-k}$.  
\end{defi}

\begin{defi}\label{def:Palais_Smale_cond} A pseudo-gradient $V$ is \emph{Palais-Smale} if for any pair $x,y$ of critical points, $U_x$ intersects $S_y$ transversally.
\end{defi}

Let $R$ be a ring (say $R= \zz$). If $x\in \Crit f$, let $o,o'$ be the two orientations of $U_x$. As in \cite{seidel2008fukaya}, let the normalization of $x$ be the line
\e
\abs{x}_R = (R o \oplus R o' )/o+o'.
\e
If $V$ is Palais-Smale, $\Mcal_{\partial}(x,y) = (U_x\cap S_y)/\rr$, where $\rr$ acts on $U_x\cap S_y$ by the flow of $V$. It is \emph{oriented relatively to $x$ and $y$}, in the sense that orientations $o_x, o_y$ on $U_x, U_y$ canonically induce an orientation on $\Mcal_{\partial}(x,y)$, and reversing either  $o_x$ or $o_y$ reverses the orientation on $\Mcal_{\partial}(x,y)$. If $\ind (y) = \ind (x) -1$, a choice of $o_x, o_y$ gives an integer $\# \Mcal_{\partial}(x,y)_{o_x, o_y}\in \zz$, and a map
\e
\abs{\Mcal_{\partial}(x,y)}_R \colon \abs{x}_R \to \abs{y}_R 
\e
independent on $o_x, o_y$. 
\begin{defi}\label{def:Morse_complex} Assume $V\in\mathfrak{X}(X,f)$  is Palais-Smale. Define the \emph{Morse complex} 
\[
CM_*(X,f),\ \partial\colon CM_*(X,f)\to CM_{*-1}(X,f) 
\] 
by
\ea
CM_k(X,f) &= \bigoplus_{x\in \Crit_k(f)} \abs{x}_R\\
\partial  &= \sum_{\ind (y) = \ind (x) -1}\abs{\Mcal_{\partial}(x,y)}_R.
\ea
One has $\partial^2 = 0$ and  $HM_*(X,f) = H_*(X, R)$. Intuitively, $x\in  \Crit_k(f)$ corresponds to a $k$-chain obtained by triangulating $U_x$.
\end{defi}

Let $F\colon X \to Y$ be a differentiable map between two smooth compact manifolds. 
Endow $X$ and $Y$ with two Morse functions $f\colon X\to \rr$, $g\colon Y\to \rr$, and pseudo-gradients $V, W$. Let $x\in \Crit_k(f)$ be a generator of $CM_k(X,f)$ (say $k\geq 1$ for the following discussion). Heuristically, $x$ corresponds to the $k$-chain of its unstable manifold $U_x$, therefore its image $F_*x$ should correspond to $F(U_x)$, which is a priori unrelated to $g$. Apply the flow of $W$ to it: for $t$ large enough, most points will fall down to local minimums, except those points lying in a stable manifold $S_y$ of a critical point $y$ of index $l\geq 1$. If $k=l$, then a small neighborhood of those points will concentrate to $U_y$, which now corresponds to a generator of $CM(Y,g)$. Loosely speaking, we want to replace $F(U_x)$ by ``$\lim_{t\to +\infty}\phi^t_{Y}( F(U_x))$''. This defines the right homology class, but breaks (ordinary) functoriality at the chain level. We will show that functoriality can be restored in the $\infty$-category world.

Therefore the previous discussion motivates the following definition. Assume that $f$, $g$ and two pseudo-gradients $V, W$ are chosen so that, for any critical points $x$ and $y$ of $f$ and $g$ respectively, the graph $\Gamma(F)$ intersects $U_x\times S_y$ transversely in $X\times Y$. Then $\Mcal(F;x,y) = \Gamma (F) \cap (U_x \times S_y)\simeq  U_x \cap F^{-1} (S_y)$ is oriented relatively to $x$ and $y$, and of dimension $\ind(x) - \ind(y)$. 
Define then 
\ea \nonumber
F_* &\colon CM_*(X,f)\to CM_*(Y,g)\text{ by} \\
F_*  &=  \sum_{\ind (y) = \ind (x) }\abs{\Mcal(F;x,y)}_R. \label{eq:Morse_pushforwards}
\ea
since $\Mcal(F;x,y)$ corresponds to those points whose neighborhoods concentrate to $U_y$. 
This map induces the actual pushforward in homology \cite[prop.~2.8.2]{KMbook}.

It is convenient to think of $\Mcal(F;x,y)$ as a moduli space of \emph{grafted flow lines} from $x$ to $y$: By this we mean a pair of flow lines $(\gamma_-,\gamma_+)$, with
\begin{align*}
\gamma_- &\colon \rr_- \to M,\ \gamma_-'(t) =  V(\gamma_-(t)), \\
\gamma_+ &\colon \rr_+ \to N,\ \gamma_+'(t) =  W(\gamma_+(t))
\end{align*}
such that 
\begin{align*}
&\lim_{t\to -\infty}{\gamma_-(t)} = x, \\ 
&\lim_{t\to +\infty}{\gamma_+(t)} = y, \\
&F\left(\gamma_-(0)\right) = \gamma_+(0). 
\end{align*}
The identification with $\Mcal(F;x,y)$ is given by:
\e
(\gamma_-,\gamma_+)\mapsto (\gamma_-(0),\gamma_+(0)).
\e

\subsection{Infinity categories}
\label{ssec:infty_cat_background}

We quickly recall the weak Kan complex model of $(\infty,1)$-categories, and refer to \cite{GrothShortCourse} for more details.

A \emph{simplicial set} is a functor $\Delta^{\rm op} \to \Scal et$, with $\Delta$ standing for the category of finite ordinals $[n] = (0< \cdots < n)$ and order-preserving maps. The category of simplicial sets is denoted $s\Scal et$. 
Any order-preserving map induces maps between simplices, in particular one has (co)faces and (co)degeneration maps.

A simplicial set is a \emph{weak Kan complex} if every inner horn can be filled. An \emph{inner horn} is a collection of $n$ $(n-1)$-simplices arranged as the $i$-th  horn of the standard $n$-simplex $\Lambda_i^n = \partial \Delta^n \setminus \partial_i \Delta^n$, $1\leq i \leq n-1$.

Any category  $\Ccal$ has a nerve $N(\Ccal) \in s\Scal et$, where an $n$-simplex consists of objects $x_0, \ldots ,x_n$ and maps $f_{ij} = f_{i(i+1)} \circ \cdots \circ f_{(j-1)j} \colon x_j \to x_i$, for $i<j$.

A variant of this construction exists for $dg$ categories. It first appeared in \cite[Def.~A2.1]{HinichSchechtman} under the name of Sugarawa $n$-simplices, and was extended to $dg$ categories and further studied by Lurie \cite[Constr.~1.3.1.6]{LurieHA}.

If $\sigma = [\sigma_0, \ldots , \sigma_m]$, 
we will write $m = \dim \sigma$, and for $0\leq i \leq m$, we denote 
\ea
\partial_i \sigma &= [\sigma_0, \ldots , \widehat{\sigma}_i ,  \ldots , \sigma_m] , \\
\sigma_{\leq i} &= [\sigma_0, \ldots , \sigma_i] , \\
\sigma_{\geq i}  &= [\sigma_i, \ldots , \sigma_m] .
\ea

Let $\Ccal$ be a $dg$ category, its $dg$ nerve $N_{dg}(\Ccal)$ is a weak Kan complex such that, for $n\geq 0$, its $n$-simplices are pairs $(\lbrace A_i \rbrace_{0\leq i \leq n} , \lbrace\varphi_{\sigma} \rbrace_{\sigma\subset [n]})$, with $A_i$ objects of $\Ccal$, and for $\sigma = [\sigma_0, \ldots , \sigma_m]\subset [n]$, $m\geq 1$, $\varphi_{\sigma} \colon A_{\sigma_m} \to A_{\sigma_0}$ is of degree $m-1$, and satisfy the coherence relations:
\e\label{eq:cohrel_dg_nerve}
\partial (\varphi_{\sigma}) = \sum_{i=1}^{m-1} (-1)^{i} ( \varphi_{\partial_i \sigma} - \varphi_{\sigma_{\leq i}}  \circ \varphi_{\sigma_{\geq i}} ).
\e

In particular, this construction applies to the $dg$ category of chain complexes $\Ch_{\Acal}$ over a graded linear category $\Acal$.

\subsection{$f$-bialgebras}
\label{ssec:f_bialg_background}

We very briefly remind some notions and notations from \cite{biass}, and refer to this paper for more detail.

A (rooted ribbon) \emph{forest} $\varphi
$ consists in:
\begin{itemize}
\item Ordered finite sets of leaves $\Leaves(\varphi)$ and roots $\Roots(\varphi)$,
\item Finite sets of edges $\Edges(\varphi)$ and vertices  $\Vert(\varphi)$,
\item Source and target maps 
\ea
s &\colon \Edges(\varphi) \to \Vert(\varphi) \cup \Leaves(\varphi),\nonumber\\
t &\colon \Edges(\varphi) \to \Vert(\varphi) \cup \Roots(\varphi),\nonumber
\ea
\end{itemize}
satisfying some conditions.

An ascending (resp. descending) forest $U=(\varphi,h)$ (resp. $D=(\varphi,h)$) is a forest equipped with a function $h\colon \Vert(\varphi) \to \rr$ that decreases (resp. increases) along edges. A biforest is a pair $B = (U,D)$.

We use multi-indices $\kund = (k_1, \ldots , k_a)$ (resp.  $\lund = (l_1, \ldots , l_b)$) to denote the type of an ascending (resp. descending) forests. That is, the ascending forest has $a$ trees, and the i-th tree has $k_i \geq 1$ leaves.

We say that $\varphi$ is \emph{vertical} (or trivial) if it has no vertices, i.e. $\kund = (1, \ldots , 1 )$, i.e. all edges are infinite. 
We say that $\varphi$ is \emph{almost vertical} if it is not vertical, is trivalent and has no internal edges, i.e. $\kund$ has only 1 and 2 as coefficients (and at least one 2).

The moduli spaces of ascending forests, descending forests, and biforests of a given type are respectively denoted $U^{\kund}$, $D_{\lund}$ and $J^{\kund}_{\lund} = U^{\kund} \times D_{\lund}$.

We will use the following notations associated to a multi-index $\kund$:
\begin{itemize}
\item $\abs{\kund} = k_1 + \cdots + k_a$ corresponds to the number of leaves of forests,
\item $n({\kund})=a$ the number of trees (i.e. the number of roots),
\item $\tilde{\kund}$ the multi-index obtained by removing all the entries in $\kund$ equal to 1,
\item $\tilde{a} = \tilde{n}({\kund}) = n(\tilde{\kund})$ the number of nonvertical trees,
\item $v(\kund) = \abs{\kund} - n({\kund})$ the generic number of vertices (see below),
\item we will denote $\One_a= (1, \ldots,1)$ the multi-index with $a$ entries equal to 1. 
\end{itemize}
To a multi-index $\kund$ we associate the set
\e
\Vert(\kund) = \left\lbrace 1, \ldots , \abs{\kund} \right\rbrace \setminus \left\lbrace k_1, k_1 + k_2, \ldots , \abs{\kund} \right\rbrace ,
\e
which has cardinality $v(\kund) = \abs{\kund} - n({\kund})$. One has canonical identifications
\e\label{eq:identif_bimult}
i^{\kund}_{\lund} \colon J^{\kund}_{\lund} \to \rr^{\Vert(\kund) \cup \Vert(\lund)} \simeq \rr^{v(\kund)} \oplus  \rr^{v(\lund)},
\e
which permit to orient $J^{\kund}_{\lund}$.

If  $U = (\varphi, h)$ is either an ascending or descending forest we associate the following spaces

\ea
\abs{U} &= \coprod_{e\in \Edges(\varphi)}I_e, \\
\langle U \rangle &= \abs{U}_{/_\sim}
\ea
where $I_e \subset \rr$ is either $[h(v_2), h(v_1)]$ or  $[h(v_1), h(v_2)]$, if $e\colon v_1\to v_2$ (when $h(v_i) = \pm \infty$ the bound is excluded). In $\langle U \rangle$ the equivalence relation we quotient out by is given by  identifying endpoints corresponding to a given vertex.

We also still denote $h\colon \abs{U} \to \rr$ and $h\colon \langle U \rangle \to \rr$ the obvious ``vertical'' projections. The graph associated to the biforest, which we refer to as the \emph{Henriques intersection} (see \cite[Fig.~3]{biass}), is defined to be the fibered product
\e
\Gamma(U, D) = \langle U \rangle\times_{\rr} \langle D \rangle,
\e
with induced height function. 
We let $c(\kund,\lund)$ stand for the dimension of the symmetry group preserving $\Gamma(U, D)$:
\e
c(\kund,\lund)  = \begin{cases} 
\tilde{a} &\text{ if }\tilde{b}=0,\\
\tilde{b} &\text{ if }\tilde{a}=0,\\
1&\text{ otherwise.}
\end{cases}
\e

Let $\kund^0 , \kund^1$ be two multi-indices such that $\abs{\kund^0} = n(\kund^1)$. Let ``\emph{$\kund^1$ glued on top of $\kund^0$}'' be the multi-index defined as:
\e
\kund^1 \sharp \kund^0 = ( k^1_1 + \cdots + k^1_{k^0_1} , k^1_{k^0_1 +1} + \cdots + k^1_{k^0_1 + k^0_2}, \cdots , k^1_{k^0_1 +\cdots + k^0_{a^0-1} +1} + \cdots + k^1_{a^1}).
\e
If $U^0$ and $U^1$ are ascending forests of type $\kund^0$ and $\kund^1$, this corresponds to gluing $U^1$ on top of $U^0$.

If $\kund = \kund^1 \sharp \kund^0$ and $\lund = \lund^0 \sharp \lund^1$, one has:
\e
J^{\kund}_{\lund} \simeq J^{\kund^0}_{\lund^0} \oplus J^{\kund^1}_{\lund^1},
\e
and from \cite[Lem.~2.1]{biass}, this isomorphism affects orientation by
\e
(-1)^{\heartsuit^{\kund^1}_{\kund^0} + \heartsuit^{\lund^0}_{\lund^1} + v(\lund^0) ( v(\kund^1) + v(\lund^1) )},
\e
where
\ea
\heartsuit^{\kund^1}_{\kund^0} &= \sum_{h = 1}^{a^1} (k^1_{h} -1) v_{\geq h}(\kund^0)\text{, with} \\
v_{\geq h}(\kund) &= \mathrm{Card} \left(\Vert (\kund) \cap [ h,+\infty ) \right) \\
&= (k_i - i') + (k_{i+1} - 1) + \cdots +(k_{a} - 1)\text{, if} \nonumber\\
h&= k_1 + \cdots + k_{i-1} + i'. \nonumber
\ea
The quantity $\heartsuit^{\kund^1}_{\kund^0} \in \Z{2}$ is the signature of the permutation of $\left\lbrace 1, \ldots , v(\kund) \right\rbrace$ corresponding to the identification 
$\Vert (\kund) \simeq \Vert (\kund^0) \cup \Vert (\kund^1)$.

Operations associated to biforests have inputs and outputs lying on a rectangular grid, as in \cite[Fig.~10]{biass}. These correspond to a ``rectangular tensor product'' chain complex $(A^b)^a = (A^{\otimes b})^{\otimes a}$. When splitting $a=a_1+ \cdots +a_k$ we use the obvious identification
\e\label{eq:iso_split_a}
(A^b)^a \simeq (A^b)^{a_1} \otimes \cdots \otimes (A^b)^{a_k}.
\e
However, when splitting $b=b_1+ \cdots +b_k$, the identification 
\e\label{eq:iso_split_b}
(A^b)^a \simeq (A^{b_1})^{a} \otimes \cdots \otimes (A^{b_k})^{a}
\e
is given by first exchanging $b$ and $a$ (with the Koszul sign convention), and then applying (\ref{eq:iso_split_a}).

If $a=a_1+a_2+a_3$, we identify 
\e
(A^b)^a\simeq (A^b)^{a_1 + a_3} \otimes (A^b)^{a_2} 
\e
via $\xund_1 \otimes \xund_2 \otimes \xund_3 \mapsto (-1)^{\abs{\xund_2}\abs{\xund_3}} (\xund_1 \otimes \xund_3 ) \otimes \xund_2$ Likewise, if $b=b_1+b_2+b_3$, we will identify 
\e
(A^b)^a\simeq (A^{b_1 + b_3})^a \otimes (A^{a_2})^a 
\e 
similarly, after exchanging $b$ and $a$.

Given maps $P_i\colon (A^b)^{a_i} \to (A^d)^{c_i}$ and  $P\colon (A^b)^{a} \to (A^d)^{c}$, with $a = a_1+ \cdots + a_k$ and $c = c_1+ \cdots + c_k$, whenever we write $P \simeq P_1 \otimes \cdots \otimes P_k$, we mean that the following diagram commutes, where the vertical maps are the identifications (\ref{eq:iso_split_a}):
\e
\xymatrixcolsep{.8in}
\xymatrix{ (A^b)^{a} \ar[d]\ar[r]^P & (A^d)^{c} \ar[d]\\
(A^b)^{a_1} \otimes \cdots \otimes (A^b)^{a_k} \ar[r]^{P_1 \otimes \cdots \otimes P_k} & (A^d)^{c_1} \otimes \cdots \otimes (A^d)^{c_k} 
}
\e
Likewise for maps $P_i\colon (A^{b_i})^{a} \to (A^{d_i})^{c}$ and  $P\colon (A^b)^{a} \to (A^d)^{c}$, with $b = b_1+ \cdots + b_k$, $d = d_1+ \cdots + d_k$, and the identifications (\ref{eq:iso_split_b}).

We defined an $f$-bialgebra $(A, \alpha)$ as a collection of operations 
\e
{\alpha}^{\kund}_{\lund}\colon (A^b)^{\abs{\kund}} \to (A^{\abs{\lund}})^{a},
\e
satisfying some coherence and simplification relations, which we will recall in Section~\ref{sec:three_wKan}. 
If  $(A, \alpha)$  and $(B, \beta)$  are two $f$-bialgebra, we define a morphism of $f$-bialgebras  $f\colon A \to B$ as a collection of operations:
\e
f^{\kund}_{\lund} \colon  (A^b)^{\abs{\kund}} \to (B^{\abs{\lund}})^{a}
\e
satisfying coherence and simplification relations that we will also recall in Section~\ref{sec:three_wKan}.

We also define an \emph{ascending $(A,B)$-bimodule} (or $(A,B)$ $u$-bimodule) $(M,\mu)$ to be a family of operations
\e\label{eq:u_bimod_operations}
\mu^{(\kund^l |\epsilon | \kund^r)}_{\lund}\colon (A  ^{b} )^{\abs{\kund^l}} \otimes (M  ^{b} )^{\epsilon} \otimes (  B^{b} )^{\abs{\kund^r}} \to (A  ^{\abs{\lund}} )^{a^l} \otimes (M  ^{\abs{\lund}} )^{\epsilon} \otimes (  B^{\abs{\lund}} )^{a^r},
\e
satisfying relations analogous to $f$-bialgebras. In the above, $\epsilon=0$ or $1$. When $\epsilon=0$, $\kund = (\kund^l |0 | \kund^r)$ corresponds to separating an ascending biforest to a left and a right subforest. Either $\kund^l$ or $\kund^r$ are allowed to be empty, but not at the same time. When $\epsilon=1$, $\kund = (\kund^l |1 | \kund^r)$ corresponds to choosing a leaf of $\kund$, and separate the corresponding forest to a left, central, and a right part \cite[Fig.~11]{biass}.

Finally, given an $(A,B)$ $u$-bimodule $(M,\mu)$, an $(A',B')$ $u$-bimodule $(M',\mu)$, and morphisms of $f$-bialgebras $f\colon A \to A'$, $g\colon B \to B'$, we define an $(f,g)$-equivariant morphism $\Psi\colon M \to M'$ as a collection of morphisms
\e\label{eq:mph_u_bimod_operations}
\Psi^{(\kund^l |\epsilon | \kund^r)}_{\lund}\colon (A  ^{b} )^{\abs{\kund^l}} \otimes (M  ^{b} )^{\epsilon} \otimes (  B^{b} )^{\abs{\kund^r}} \to ({A'}^{\abs{\lund}} )^{a^l} \otimes ({M'}  ^{\abs{\lund}} )^{\epsilon} \otimes (  {B'}^{\abs{\lund}} )^{a^r},
\e
satisfying relations analogous to morphisms of $f$-bialgebras.

\section{Informal outline}
\label{sec:informal_outline}

As mentioned in Section~\ref{ssec:Morse_background}, Morse pushforwards are not functorial in the ordinary sense: functoriality holds up to homotopy. That is, if $\phi_{01}\colon X_1 \to X_0$ and $\phi_{12}\colon X_2 \to X_1$ are smooth maps between closed manifolds, then 
\e
\partial (\varphi_{012}) =  (\phi_{01})_* \circ (\phi_{12})_* - (\phi_{01} \circ \phi_{12})_*,
\e
where $\varphi_{012}\colon CM(X_2) \to CM(X_0)$ is defined using a parametrized moduli space, and is part of a larger family that constitutes a simplicial map, as we construct below.

Consider a sequence of closed smooth manifolds $X_0$, ..., $X_n$. On each $X_i$, fix a Morse function $f_i$, a Palais-Smale pseudo-gradient $V_i$; and let $\phi_{X_i}^{t}$ stand for its time $t$ flow. Let also $\phi_{i(i+1)}$ be smooth maps between them, as in the diagram below ($\Phi_{\Lund}$ will be introduced later).
\begin{center}
\begin{tikzcd}
 X_n  \ar[r, "\phi_{(n-1)n}"] \ar[rrrr, start anchor={[yshift=1ex]}, bend left=30, "\Phi_{\Lund}"] 
& X_{n-1} \ar[r, "\phi_{(n-2)(n-1)}"] \ar[,loop ,out=-123,in=-57,distance=2.5em]{}{} 
 & \cdots \ar[r, "\phi_{12}"]
  & X_{1} \ar[r, "\phi_{01}"] \ar[,loop ,out=-123,in=-57,distance=2.5em]{}{} 
   & X_{0} .\\ 
&\phi_{X_{n-1}}^{L_{n-1}} 
 & 
  &\phi_{X_1}^{L_1} 
   &  
\end{tikzcd}
\end{center}

Let us denote $I = [0, +\infty)$ and $\overline{I} = [0, +\infty]$. 

\begin{defi}\label{def:n_grafted_line} An \emph{$n$-grafted line} from $x\in X_n$ to $y\in X_0$ of lengths $\Lund = (L_1, \ldots , L_{n-1}) \in I^{n-1}$ is a family of flowlines $\gamund =(\gamma_0, \gamma_1, \ldots, \gamma_n)$ as below (see Figure~\ref{fig:rectangular_box}).

The lengths $\Lund$ determine a sequence $h_{01}< \cdots < h_{(n-1)n}$ of grafting heights:
\e\label{eq:grafting_heights}
h_{01} = 0,\ h_{12} = L_1,\ h_{23} = L_1+ L_2,\ \ldots , h_{(n-1)n} = L_1+ \cdots + L_{n-1}.
\e
We call $h(\Lund) = h_{(n-1)n}$ the total height of $\Lund$. The flowlines are:
\ea
\gamma_n &\colon (-\infty, - h_{(n-1)n}] \to X_n , \nonumber\\
\gamma_{n-1} &\colon [ - h_{(n-1)n}, - h_{(n-2)(n-1)}] \to X_{n-1} , \nonumber\\
\vdots & \nonumber\\
\gamma_{1} &\colon [- h_{12},- h_{01}] \to X_{1} , \nonumber\\
\gamma_0 &\colon [- h_{01}, +\infty)  \to X_0 ,\nonumber 
\ea
and are required to satisfy the grafting conditions, for $0\leq i <n$:
\e
\gamma_i (-h_{i(i+1)}) = \phi_{i(i+1)}( \gamma_{i+1} (-h_{i(i+1)})),
\e
and the limit conditions:
\e
  \lim_{-\infty} \gamund = x ,\ \ \  \lim_{+\infty} \gamund =  y   ,
\e
with $\lim_{-\infty} \gamund = \lim_{-\infty} \gamma_{0}$ and $\lim_{+\infty} \gamund = \lim_{+\infty} \gamma_{n}$.
\end{defi}

\begin{remark}
Since we consider negative gradient flows, we parametrize the domains by $t = -h$. 
\end{remark}

\begin{figure}[!h]
    \centering
    \def\svgwidth{.80\textwidth}
    %% Creator: Inkscape 1.2.2 (b0a8486541, 2022-12-01), www.inkscape.org
%% PDF/EPS/PS + LaTeX output extension by Johan Engelen, 2010
%% Accompanies image file '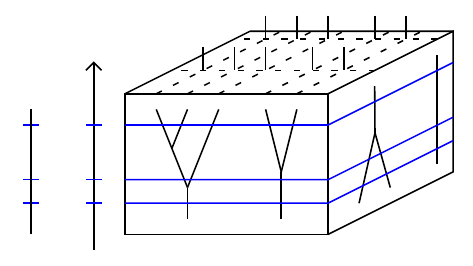' (pdf, eps, ps)
%%
%% To include the image in your LaTeX document, write
%%   \input{<filename>.pdf_tex}
%%  instead of
%%   \includegraphics{<filename>.pdf}
%% To scale the image, write
%%   \def\svgwidth{<desired width>}
%%   \input{<filename>.pdf_tex}
%%  instead of
%%   \includegraphics[width=<desired width>]{<filename>.pdf}
%%
%% Images with a different path to the parent latex file can
%% be accessed with the `import' package (which may need to be
%% installed) using
%%   \usepackage{import}
%% in the preamble, and then including the image with
%%   \import{<path to file>}{<filename>.pdf_tex}
%% Alternatively, one can specify
%%   \graphicspath{{<path to file>/}}
%% 
%% For more information, please see info/svg-inkscape on CTAN:
%%   http://tug.ctan.org/tex-archive/info/svg-inkscape
%%
\begingroup%
  \makeatletter%
  \providecommand\color[2][]{%
    \errmessage{(Inkscape) Color is used for the text in Inkscape, but the package 'color.sty' is not loaded}%
    \renewcommand\color[2][]{}%
  }%
  \providecommand\transparent[1]{%
    \errmessage{(Inkscape) Transparency is used (non-zero) for the text in Inkscape, but the package 'transparent.sty' is not loaded}%
    \renewcommand\transparent[1]{}%
  }%
  \providecommand\rotatebox[2]{#2}%
  \newcommand*\fsize{\dimexpr\f@size pt\relax}%
  \newcommand*\lineheight[1]{\fontsize{\fsize}{#1\fsize}\selectfont}%
  \ifx\svgwidth\undefined%
    \setlength{\unitlength}{226.77165354bp}%
    \ifx\svgscale\undefined%
      \relax%
    \else%
      \setlength{\unitlength}{\unitlength * \real{\svgscale}}%
    \fi%
  \else%
    \setlength{\unitlength}{\svgwidth}%
  \fi%
  \global\let\svgwidth\undefined%
  \global\let\svgscale\undefined%
  \makeatother%
  \begin{picture}(1,0.5625)%
    \lineheight{1}%
    \setlength\tabcolsep{0pt}%
    \put(0,0){\includegraphics[width=\unitlength,page=1]{rectangular_box.pdf}}%
    \put(0.17909219,0.44918361){\color[rgb]{0,0,0}\makebox(0,0)[lt]{\lineheight{1.25}\smash{\begin{tabular}[t]{l}$h$\end{tabular}}}}%
    \put(0.11575521,0.11601567){\color[rgb]{0,0,0}\makebox(0,0)[lt]{\lineheight{1.25}\smash{\begin{tabular}[t]{l}$h_{01}$\end{tabular}}}}%
    \put(0.11435078,0.17545657){\color[rgb]{0,0,0}\makebox(0,0)[lt]{\lineheight{1.25}\smash{\begin{tabular}[t]{l}$h_{12}$\end{tabular}}}}%
    \put(0.11498472,0.28707252){\color[rgb]{0,0,0}\makebox(0,0)[lt]{\lineheight{1.25}\smash{\begin{tabular}[t]{l}$h_{23}$\end{tabular}}}}%
    \put(0,0){\includegraphics[width=\unitlength,page=2]{rectangular_box.pdf}}%
    \put(0.0504728,0.3670994){\color[rgb]{0,0,0}\makebox(0,0)[lt]{\lineheight{1.25}\smash{\begin{tabular}[t]{l}$x$\end{tabular}}}}%
    \put(0.04807453,0.01379086){\color[rgb]{0,0,0}\makebox(0,0)[lt]{\lineheight{1.25}\smash{\begin{tabular}[t]{l}$y$\end{tabular}}}}%
  \end{picture}%
\endgroup%

      \caption{On the left, a $3$-grafted flowline. On the right, a $3$-grafted biforest. Inside the box lives a $3$-grafted intersection graph.}
      \label{fig:rectangular_box}
\end{figure}
 
One can form the moduli space
\e
\Mcal_{[n]}(x;y) = \left\lbrace (\Lund, \gamund)\ \left| \gamund\text{ is an $n$-grafted line of lengths $\Lund$ from $x$ to $y$}  \right. \right\rbrace,
\e
which (if one perturbs the pseudogradients appropriately) is generically transversely cut out, and has dimension $\ind(x) - \ind(y) +n-1$. When of dimensions zero, it is an oriented finite set, which can be counted to define 
\e
\varphi_{0, \ldots,n} \colon A_n \to A_0,
\e
with $A_i = CM_*(X_i)$. When of dimension one, it compactifies to a compact oriented 1-dimensional manifold with boundary, and the count of its boundary points gives the coherence relation, as we shall see.

To orient these moduli spaces, it is helpful to notice that $\varphi_{0, \ldots, n}$ can be seen as a pushforward, as we now explain. 
Indeed, given $\Lund = (L_1, \ldots , L_{n-1}) \in I^{n-1}$, let 
\e\label{eq:Phi_Lund_grafted_line}
\Phi_{\Lund} := \phi_{01} \circ \phi^{L_{1}}_{X_{1}} \circ \phi_{12} \circ \cdots \circ \phi^{L_{n-1}}_{X_{n-1}}\circ \phi_{(n-1)n}.
\e
As a single map, we get
\e\label{eq:Phi_grafted_line}
\Phi\colon I^{n-1} \times X_{n} \to X_0.
\e
Equip $I^{n-1}$ with a Morse function $f_I$ having a single critical point $o$ which is in its interior, and is a local maximum. Let $V_I$ be a pseudo-gradient. Orient the unstable manifold $U_o \simeq I^{n-1}$ with the standard orientation of $I^{n-1}$. Then $\widehat{X}_n = I^{n-1} \times X_{n}$ is equipped with $f_{\widehat{X}_n} = f_{I} + f_{{X}_n}$ and  $V_{\widehat{X}_n} = V_{I} + V_{{X}_n}$, and
\e
CM_*(I^{n-1} \times X_{n}) \simeq A_n[-(n-1)].
\e
Under this identification, one has $\varphi_{0, \ldots,n}= \Phi_*$.

Let us pretend that $\Phi$ extends smoothly to the closure $\widetilde{X}_n = \overline{I}^{n-1} \times X_{n}$. 
This is far from being true: the limit of the flow at infinity is a discontinuous map. Nevertheless, it gives the right intuition and the right signs.

Observe that for the above choice of Morse functions, $CM_*(\widetilde{X}_n)$ models the complex of $\widetilde{X}_n$ relative to its boundary. 
As a general fact, we have that 
\e\label{eq:bdary_pullback}
\partial (\Phi_*) = (\Phi_{|\partial \widetilde{X}_n})_*, 
\e
where 
\ea
\partial (\Phi_*) &= \partial_{X_0} \circ \Phi_* -  \Phi_* \circ \partial_{\widetilde{X}_n} \\
&\simeq \partial_{X_0} \circ \Phi_* - ( -1)^{n-1}  \Phi_* \circ \partial_{X_n} = \partial (\varphi_{0, \ldots,n}), 
\ea
and $\Phi_{|\partial \widetilde{X}_n}$ stands for the restriction of $\Phi$ to 
\e\label{eq:bdary_cube_times_Mn}
\partial \widetilde{X}_n = \left( \bigcup_{i=1}^{n-1} (-1)^{i} ( F_i^{0} \cup  - F_i^{+\infty} ) \right) \times X_n ,
\e 
where $F_i^{\alpha} = \left\lbrace x_i = \alpha \right\rbrace$ are the faces of the cube $\overline{I}^{n-1}$. From (\ref{eq:bdary_cube_times_Mn}), we obtain
\ea
(\Phi_{|\partial \widetilde{X}_n})_* = \sum_{i=1}^{n-1} (-1)^{i} ( \varphi_{0, \ldots, \widehat{i}, \ldots,n} - \varphi_{0, \ldots,i} \circ \varphi_{i, \ldots,n}),
\ea
and therefore (\ref{eq:bdary_pullback}) is the coherence relation (\ref{eq:cohrel_dg_nerve}) of the $\infty$-category $N_{dg}(\Ch_{\Rmod})$. This ends the informal proof of Corollary~\ref{cor:fun_Man_Kom}.

Let us now turn to Corollary~\ref{cor:fun_Man}: we now want to include the comultiplicative structure of the Morse complex. For a given manifold $X$, it is dual to the cup-product in cohomology, which is induced by the diagonal map $\Delta \colon X \to X\times X$. At the chain level, it becomes the \Ainf-algebra structure introduced by Fukaya by counting Morse flow trees \cite{Fukaya_htpy,FukayaOh_zeroloop,Mescher_book}. Since we consider chains, we get a dual \Ainf-coalgebra structure.

Consider a sequence of closed smooth manifolds and smooth maps as before. 
A natural idea is to replace the vertical line in the previous construction by a vertical tree: one would get $n$-grafted trees with $l$ leaves. These would form a family 
\e
J(n)_l = I^{n-1} \times J_l,
\e
where $J_l$ stands for the interior of the multiplihedron, which parametrizes trees with one grafting level. These would allow one to define moduli spaces of grafted flow trees that one could count in order to define maps

\e
(\varphi_{0\ldots n})_{l} \colon A_n \to (A_0)^l .
\e
When trying to write down coherence relations for these maps, one faces the issue that the natural compactification of $J(n)_l$ is not ``face-coherent'', i.e. its boundary faces are not products of similar spaces $J(n')_{l'}$. This prevents to express boundary terms of the one-dimensional compactified moduli spaces as combinations of maps $(\varphi_{0\ldots n'})_{l'} $. A similar problem appeared in \cite{biass}, we refer to Section~1.1 of this paper for a more detailed discussion.  A similar solution applies in the present setting: one can enlarge the family by replacing trees by forests. Therefore, the integer $l$ becomes a multi-index $\lund = (l_1, \ldots , l_b)$; one gets spaces of grafted forests $J(n)_{\lund}$, which are now face-coherent; and induce a family of maps
\e
(\varphi_{0\ldots n})_{\lund} \colon (A_n)^b \to (A_0)^{l_1 + \cdots + l_b} ,
\e
for which one can write down coherence relations. This will give us a definition for an $\infty$-category of \Ainf -coalgebras, which we call $\fCoalg$; and an $\infty$-functor as stated.

The proof of Theorem~\ref{th:LieMon_to_fBialg} is then analogous, replacing forests by Henriques intersections of biforests, as in \cite{biass} (right of Figure~\ref{fig:rectangular_box}). One will then consider spaces of grafted biforests $J(n)^{\kund}_{\lund}$ indexed by two multi-indices $\kund = (k_1, \ldots , k_a)$ and $\lund = (l_1, \ldots , l_b)$ which will induce a family of maps
\e
(\varphi_{0\ldots n})^{\kund}_{\lund} \colon ((A_n)^b)^{k_1 + \cdots + k_a} \to ((A_0)^{l_1 + \cdots + l_b})^a .
\e
Since we can regard $J(n)_{\lund}=J(n)^{(1)}_{\lund}$ as a sub-family of $J(n)^{\kund}_{\lund}$, we will construct and (``partially'') compactify $J(n)^{\kund}_{\lund}$; and then derive from it the definitions of $\fBialg$, $\fCoalg$ and $\fAlg$. In fact, they will be subcategories of some $dg$ nerves.

In the above discussion we implicitly assumed that all moduli spaces were transversely cut out. This might not be possible if one  uses fixed pseudo-gradients on each $X_i$. 
Nevertheless, as long as the domains are stable, one can use domain-dependent perturbations of these pseudo-gradients, as is now standard in Morse and Floer theory. 
Since domains vary smoothly in families, perturbations should also depend smoothly on the domains. Therefore, we will follow a similar approach as in \cite{seidel2008fukaya}, and construct tautological families of grafted graphs over $J(n)^{\kund}_{\lund}$, which will serve as domains for the perturbations.

\section{Higher bimultiplihedron}
\label{sec:higher_bimultipl}

\begin{defi}\label{def:n_grafted_biforest} For $n\geq 1$, an \emph{$n$-grafted biforest} is a pair $(\Lund, B)$, where $\Lund \in I^{n-1}$, and $B=(U,D)$ is a biforest, i.e. $U$ (resp. $D$) is an ascending (resp. descending) forest. 
A $0$-grafted biforest is a class of biforests, modulo translation.

If $U$ is of type $\kund$ and $D$ of type $\lund$, let the type of $(\Lund, U,D)$ be $\dd = (n,\kund,\lund)$. 
Let $J_{\dd} = J(n)^{\kund}_{\lund}$ stand for the moduli space of grafted forests of type $\dd$. If $n=0$ we let, as in \cite{biass}:
\e
J(0)^{\kund}_{\lund}= K^{\kund}_{\lund} =\begin{cases} J^{\kund}_{\lund} /\rr &\text{if }c(\kund,\lund)=1, \\ \emptyset &\text{otherwise}.\end{cases}
\e
From the identification (\ref{eq:identif_bimult}), $J(n)^{\kund}_{\lund}$ is the standard corner: 
\e
J(n)^{\kund}_{\lund}= K^{\kund}_{\lund} =  I^{n-1} \times  J^{\kund}_{\lund} \simeq I^{n-1} \times  \rr^{v(\kund)} \times \rr^{v(\lund)},
\e
and we orient it as such.
\end{defi}

As in \cite{biass}, we construct a partial compactification by attaching boundary components along gluing maps.

If $\dd^0 = (n^0,\kund^0,\lund^0)$ and  $\dd^1 = (n^1,\kund^1,\lund^1)$, write when it makes sense:
\e
\dd^1 \sharp \dd^0 = (n^0 + n^1,\kund^1 \sharp \kund^0,\lund^0 \sharp \lund^1).
\e

Recall that one has a natural (unoriented) identification $J^{\kund}_{\lund}\simeq J^{\kund^0}_{\lund^0}\times J^{\kund^1}_{\lund^1}$. 
We denote $v^i = (1, \ldots, 1) \in J^{\kund^i}_{\lund^i}$ the generators of the translation actions. 
If $\dd= \dd^1 \sharp \dd^0$, let the gluing map be the linear map

\ea
g_{\dd^0, \dd^1}\colon I \times J_{\dd^0} \times J_{\dd^1} &\to J_{\dd} \\
(L, (\Lund^0, B^0),(\Lund^1, B^1)) &\mapsto (\Lund, B ),\nonumber
\ea
with, identifying the quotients as linear subspaces $J(0)^{\kund}_{\lund} \subset J^{\kund}_{\lund}$:

\begin{itemize}
\item if $n_0, n_1 \geq 1$,
\ea
\Lund &= (L_1^0, \ldots , L_{n_0 - 1}^0, L, L_1^1 \ldots, L_{n_1 - 1}^1) , \\
B &= (B^0 , B^1 + ( h(\Lund^0 )+ L  ) v^1) .
\ea
That is, when $L$ is large enough,  $B$ is obtained by gluing $B^1$, shifted upwards by $h(\Lund^0 )+ L$, on top of $B^0$.

\item if $n_0=0$ and $ n_1 \geq 1$,
\ea
\Lund &= \Lund^1 , \\
B &= (B^0 - L v^0, B^1 ) .
\ea

\item if $n_1=0$ and $ n_0 \geq 1$,
\ea
\Lund &= \Lund^0 , \\
B &= (B^0 , B^1 + L v^1) .
\ea

\item if  $n_0=n_1=0$,
\e
B = \mathrm{proj}(B^0 , B^1 + L v^1) ,
\e
with $\mathrm{proj}\colon J^{\kund}_{\lund} \to J(0)^{\kund}_{\lund}$ the projection to the quotient.
\end{itemize}

Then  we can define 
\e
\left(\overline{J}_{\dd} \right)_{\leq 1} =  \left( {J}_{\dd} \cup \bigcup_{\dd= \dd^1 \sharp \dd^0} I \times {J}_{\dd^0} \times {J}_{\dd^1} \right) /\sim ,
\e
where $\sim$ identifies points in $I \times {J}_{\dd^0} \times {J}_{\dd^1}$ with their images under the gluing map $g_{\dd^0, \dd^1}$. It has boundary:
\e
\partial \left(\overline{J}_{\dd} \right)_{\leq 1} = \bigcup_{i=1}^{n-1} \partial_i J_{\dd} \cup \bigcup_{\dd= \dd^1 \sharp \dd^0} (-1)^{\rho} {J}_{\dd^0} \times {J}_{\dd^1} , 
\e
where we let $\partial_i J_{\dd} = \left\lbrace L_i =0 \right\rbrace \subset \partial J_{\dd}$, and $\rho= \rho_{\dd^0, \dd^1} \in \Z{2}$ is the orientation of $g_{\dd^0, \dd^1}$, which we compute below. 
Observe that $\partial_i J_{\dd} \simeq (-1)^{i} J_{\partial_i \dd }$, with $\partial_i \dd =(n-1, \kund,\lund)$.

\begin{remark}\label{rem:J_over_cube}
The projection $J_{\dd} \to I^{n-1}$ extends to $\left(\overline{J}_{\dd} \right)_{\leq 1} \to \overline{I}^{n-1}$, so we think of $\left(\overline{J}_{\dd} \right)_{\leq 1}$ as lying on top of the cube, and its boundary components are or three different kinds:
\begin{itemize}
\item $L_i=0$, contributing to the vertical parts of the boundary $J_{\dd}$,
\item $L_i = +\infty$, contributing to the vertical parts of the form ${J}_{\dd^0} \times {J}_{\dd^1}$, with $n_0, n_1\geq 1$,
\item $n_0=0$ or $n_1 = 0$, contributing to horizontal boundaries.
\end{itemize}
\end{remark}

\begin{lemma}\label{prop:or_glueing_maps} With $x^i = v(\kund^i)$, $y^i = v(\lund^i)$, $\heartsuit_k^{0,1} = \heartsuit^{\kund^1}_{\kund^0}$, $\heartsuit_l^{0,1} = \heartsuit^{\lund^0}_{\lund^1}$, the orientation of the gluing map is given by:
\e\label{eq:rho}
\rho = \heartsuit_k^{0,1} + \heartsuit_l^{0,1} + y^0 (x^1 + y^1)+ n_0 + 1  + (n_1 + 1 )(x^0 + y^0)
\e

\end{lemma}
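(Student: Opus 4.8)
The plan is to compute the sign $\rho_{\dd^0,\dd^1}$ by decomposing the gluing map $g_{\dd^0,\dd^1}$ into a composite of elementary linear isomorphisms whose individual orientation behaviour is already known, and then multiplying the signs. Recall that $J_{\dd}\simeq I^{n-1}\times\rr^{v(\kund)}\times\rr^{v(\lund)}$, oriented by putting the cube coordinates first, then the $\kund$-vertex coordinates (in the order of $\Vert(\kund)$), then the $\lund$-vertex coordinates. The source $I\times J_{\dd^0}\times J_{\dd^1}$ is oriented as $I\times(I^{n_0-1}\times\rr^{x^0}\times\rr^{y^0})\times(I^{n_1-1}\times\rr^{x^1}\times\rr^{y^1})$. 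Comparing these two orientations is exactly the content of the lemma, and the point is that $g_{\dd^0,\dd^1}$ is (up to the affine shift by $(h(\Lund^0)+L)v^1$, which does not affect orientation) the standard block identification together with a reshuffle of coordinate blocks.

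The key steps, in order, are as follows. First, I would treat the ``interior'' biforest part: the identification $J^{\kund}_{\lund}\simeq J^{\kund^0}_{\lund^0}\times J^{\kund^1}_{\lund^1}$ is precisely the one analysed in \cite[Lem.~2.1]{biass}, contributing the sign $\heartsuit^{\kund^1}_{\kund^0}+\heartsuit^{\lund^0}_{\lund^1}+v(\lund^0)(v(\kund^1)+v(\lund^1))=\heartsuit_k^{0,1}+\heartsuit_l^{0,1}+y^0(x^1+y^1)$. This accounts for the first three summands of \eqref{eq:rho}. Second, I would account for the cube/grafting-length coordinates: on the target side the lengths appear in the order $(L^0_1,\ldots,L^0_{n_0-1},L,L^1_1,\ldots,L^1_{n_1-1})$, i.e. the new length $L$ is inserted between the $\Lund^0$-block (of size $n_0-1$) and the $\Lund^1$-block (of size $n_1-1$), whereas on the source side $L$ comes first. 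Moving the single coordinate $L$ past the $n_0-1$ coordinates of $\Lund^0$ costs $(-1)^{n_0-1}=(-1)^{n_0+1}$. Third — and this is the step that produces the cross-term $(n_1+1)(x^0+y^0)$ — I must compare the grouping ``(all cube coords) then (all $\kund$-coords) then (all $\lund$-coords)'' on the target with the grouping ``($\dd^0$-block: $I^{n_0-1}$, $\rr^{x^0}$, $\rr^{y^0}$) then ($\dd^1$-block: $I^{n_1-1}$, $\rr^{x^1}$, $\rr^{y^1}$)'' on the source: pulling the $\dd^1$ length-block $I^{n_1-1}$ leftward past the $\rr^{x^0}\oplus\rr^{y^0}$ block it straddles gives the sign $(-1)^{(n_1-1)(x^0+y^0)}=(-1)^{(n_1+1)(x^0+y^0)}$. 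The degenerate cases $n_0=0$ and/or $n_1=0$ must be checked separately against the explicit formulas for $g_{\dd^0,\dd^1}$ given before the lemma, using that $J(0)^{\kund}_{\lund}$ sits inside $J^{\kund}_{\lund}$ as the translation-transverse subspace; one verifies the formula \eqref{eq:rho} still matches (here $n_0+1$ or $(n_1+1)$ reduces appropriately).

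I expect the main obstacle to be bookkeeping the precise insertion position of the new length coordinate $L$ and the exact block order in which $\rr^{v(\kund)}$ and $\rr^{v(\kund^0)}\oplus\rr^{v(\kund^1)}$ are to be compared, so that the $\heartsuit$ contributions from \cite[Lem.~2.1]{biass} are not double-counted with the cube-coordinate permutation; in particular one must be careful that the sign from \cite[Lem.~2.1]{biass} was computed for the pure biforest spaces $J^{\kund}_{\lund}$ (no cube factor), so only the genuinely new permutations — the position of $L$, and the interleaving of the length-block $I^{n_1-1}$ with the $\dd^0$ vertex coordinates — contribute the extra terms $n_0+1$ and $(n_1+1)(x^0+y^0)$. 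Once the order of operations is fixed, the computation is a routine multiplication of signs of transpositions of coordinate blocks, and collecting them mod $2$ yields \eqref{eq:rho}.
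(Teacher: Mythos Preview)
Your proposal is correct and follows essentially the same approach as the paper: both decompose the gluing map into the biforest identification $J^{\kund}_{\lund}\simeq J^{\kund^0}_{\lund^0}\times J^{\kund^1}_{\lund^1}$ (contributing $\heartsuit_k^{0,1}+\heartsuit_l^{0,1}+y^0(x^1+y^1)$ via \cite[Lem.~2.11]{biass}), the swap of $L$ past $I^{n_0-1}$ (contributing $n_0-1\equiv n_0+1$), and the swap of $I^{n_1-1}$ past $J^0\simeq\rr^{x^0+y^0}$ (contributing $(n_1-1)(x^0+y^0)\equiv(n_1+1)(x^0+y^0)$). The paper handles the degenerate cases $n_0=0$ or $n_1=0$ by the remark that multiplying by $I^{n-1}$ is formally equivalent to quotienting by $\rr$, which matches your separate verification.
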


\begin{remark} When $(n_0, n_1) = (0,0)$, $(1,0)$ and $(0,1)$, we respectively get the formulas for $\rho, \rho_0$ and $\rho_1$ in \cite[Prop.~2.12]{biass}
\end{remark}

\begin{proof}

Below we will prove the formula when $n_0, n_1\geq 1$, but observe that when $n=0$, multiplying by $I^{n-1}$ is formally equivalent to quotienting by $\rr$. This agrees with our orientation conventions, so the formula remains valid when $n_0$ or $n_1=0$.
Let $J = {J}^{\kund}_{\lund}$ and $J^i = {J}^{\kund^i}_{\lund^i}$.  
The gluing map is given by composing the isomorphisms\footnote{To be precise, gluing map is composing these isomorphisms with a shear mapping, which preserve the orientation.}
\ea
J_{\dd} &= I^{n-1} \times J ,\nonumber\\
 &\simeq I^{n^0-1} \times  I \times  I^{n^1 -1} \times J^0 \times J^1 , \nonumber\\
 &\simeq  I \times I^{n^0-1} \times J^0 \times   I^{n^1 -1}  \times J^1 = I \times J_{\dd^1} \times J_{\dd^1}  .\nonumber
\ea
From \cite[Lemma~2.11]{biass}, the isomorphism on the second line affects sign by $\heartsuit_k^{0,1} + \heartsuit_l^{0,1} + y^0 (x^1 + y^1)$. The isomorphism on the third line exchanges $I^{n^0-1}$ with $I$, contributing to $\dim I^{n^0-1} \cdot\dim I$; and $I^{n^1 -1}$ with $J^0$, contributing to $\dim I^{n^1 -1} \cdot\dim J^0$.
\end{proof}

\section{The target weak Kan complexes}
\label{sec:three_wKan}

In this section we construct the target $\infty$-categories of Theorem~\ref{th:LieMon_to_fBialg} and Corollaries~\ref{cor:Manact_to_uBimodact} and \ref{cor:fun_Man}. We first introduce graded linear categories 
$\Rudmod$, $\Rudubimod$, $\Ruddbimod$, $\Rumod$ and $\Rdmod$. Taking their associated $dg$ categories of chain complexes and applying the $dg$ nerve construction will give us weak Kan complexes $\PreBialg$, $\uBimodact$, $\dBimodact$, $\PreAlg$ and $\PreCoalg$. We will then define our target categories as subcategories of those. We start by constructing $\fBialg$ in Section~\ref{ssec:fBialg}, and then explain how to obtain $\fAlg$ and $\fCoalg$ in Section~\ref{ssec:fAlg_fCoalg}.

\subsection{The weak Kan complex $\fBialg$}
\label{ssec:fBialg}

Geometrically speaking, for proving Theorem~\ref{th:LieMon_to_fBialg}, one follows the proof of Corollary~\ref{cor:fun_Man_Kom}, and replaces the line by biforests. Algebraically, this corresponds to replacing the category $R\text{-}mod$ by the following:

\begin{propdef}\label{propdef:udRmod}
The following defines a graded linear category, called the \emph{category of ascending-descending foresty $R$-modules}, and denoted $\Rudmod$:

\begin{itemize}
\item Objects consist of graded $R$-modules,

\item Given two such objects $A$ and $B$, let
\e
\hom_{\Rudmod}(A,B) :=\prod_{\kund, \lund} \hom_{R\text{-}mod}\left( (A^{b})^{\abs{\kund}} , (B^{\abs{\lund}})^{a}\right)[- (v(\kund) + v(\lund))] .
\e
We will denote its elements as $\varphi = \left\lbrace \varphi^{\kund}_{\lund} \right\rbrace_{\kund, \lund}$, and think of $\varphi^{\kund}_{\lund} $ as lying in a rectangular box as in \cite[Fig.~10]{biass}. Given a homogeneous element $\varphi^{\kund}_{\lund} $, $\deg \varphi^{\kund}_{\lund} $ will stand for its degree as an element of $\hom_{\Rudmod}(A,B)$, while we will denote $\abs{\varphi^{\kund}_{\lund} }$ its degree as an element of $\hom_{R\text{-}mod}\left( (A^{b})^{\abs{\kund}} , (B^{\abs{\lund}})^{a}\right)$. That is, $\deg \varphi^{\kund}_{\lund} = \abs{\varphi^{\kund}_{\lund} } - (v(\kund) + v(\lund)) $.

\item Given $\varphi = \left\lbrace \varphi^{\kund}_{\lund} \right\rbrace \colon A\to B$ and $\psi = \left\lbrace \psi^{\kund}_{\lund} \right\rbrace \colon B\to C$, define $\psi\circ \varphi \colon A\to C$ by:
\e
(\psi\circ \varphi)^{\kund}_{\lund} = \sum_{ \begin{subarray}{c} \kund = \kund^1 \sharp \kund^0 \\ \lund = \lund^0 \sharp \lund^1 \end{subarray}} (-1)^{s^{0,1}} \cdot  \psi^{\kund^0}_{\lund^0}\circ {\varphi}^{\kund^1}_{\lund^1},
\e
where, using notations as in Proposition~\ref{prop:or_glueing_maps},

\e\label{eq:sign_s}
s^{0,1} = \heartsuit_k^{0,1} + \heartsuit_l^{0,1} + y^0 (x^1 + y^1) + (x^0 + y^0) \cdot \deg (\varphi^{\kund^1}_{\lund^1}).
\e

\item The identity morphism of $A$ is:
\e
(id_A)^{\kund}_{\lund} = \begin{cases} id_{(A^b)^a}  &\text{if }\kund = \One_a \text{ and }\lund = \One_b ,\\
   0      &\text{otherwise. } 
 \end{cases}
\e
\end{itemize}

\end{propdef}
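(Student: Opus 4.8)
\textbf{Proof proposal for Proposition-Definition~\ref{propdef:udRmod}.}

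The claim to be verified is that the data described indeed assemble into a graded linear category: we must check that $\circ$ is well-defined (the sum is finite for each fixed $\kund,\lund$ and lands in the right graded piece), that it is $R$-bilinear and respects the grading (i.e.\ $\deg(\psi\circ\varphi)=\deg\psi+\deg\varphi$), that $\circ$ is associative, and that $id_A$ as defined is a two-sided unit. The plan is to take these in turn, with associativity being the one that carries real content.

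First, well-definedness and the grading statement. For fixed $\kund,\lund$ there are only finitely many ways to write $\kund=\kund^1\sharp\kund^0$ and $\lund=\lund^0\sharp\lund^1$ (the splittings are controlled by a choice of partition of the roots, hence finite), so the sum defining $(\psi\circ\varphi)^{\kund}_{\lund}$ is finite. The summand $\psi^{\kund^0}_{\lund^0}\circ\varphi^{\kund^1}_{\lund^1}$ uses the identification $J^{\kund}_{\lund}\simeq J^{\kund^0}_{\lund^0}\oplus J^{\kund^1}_{\lund^1}$ together with the splitting isomorphisms (\ref{eq:iso_split_a}), (\ref{eq:iso_split_b}) to view the composite as a map $(A^b)^{\abs{\kund}}\to (C^{\abs{\lund}})^{a}$; one reads off from $v(\kund)=v(\kund^0)+v(\kund^1)$ and $v(\lund)=v(\lund^0)+v(\lund^1)$ that $\abs{\psi^{\kund^0}_{\lund^0}\circ\varphi^{\kund^1}_{\lund^1}}=\abs{\psi^{\kund^0}_{\lund^0}}+\abs{\varphi^{\kund^1}_{\lund^1}}$ and hence, after the degree shift, $\deg=\deg\psi^{\kund^0}_{\lund^0}+\deg\varphi^{\kund^1}_{\lund^1}$, so every summand contributes in the same total degree and $\circ$ is homogeneous of degree $0$ in the sense required of a graded linear category. $R$-bilinearity is immediate since each summand is bilinear in $(\psi^{\kund^0}_{\lund^0},\varphi^{\kund^1}_{\lund^1})$ and the sign $(-1)^{s^{0,1}}$ does not depend on $\psi,\varphi$ except through $\deg\varphi^{\kund^1}_{\lund^1}$, which is constant on homogeneous components.

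Next, the unit axiom. Plugging $\psi=id_B$ into the formula, the only splittings that survive are those with $\kund^0=\One_{a}$, $\lund^0=\One_{b}$, forcing $\kund^1=\kund$, $\lund^1=\lund$; one then checks the sign $s^{0,1}$ vanishes for this splitting ($\heartsuit$-terms vanish because one of the forests is vertical, $y^0=v(\One_b)=0$, and $x^0+y^0=0$), so $id_B\circ\varphi=\varphi$; symmetrically $\varphi\circ id_A=\varphi$, again using that the relevant $\heartsuit$-terms and $v$'s vanish and that $\deg$ of the vertical component is zero.

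The main obstacle is associativity: $(\chi\circ\psi)\circ\varphi=\chi\circ(\psi\circ\varphi)$. Both sides expand, after reindexing, as a sum over triple splittings $\kund=\kund^2\sharp\kund^1\sharp\kund^0$, $\lund=\lund^0\sharp\lund^1\sharp\lund^2$ of terms $\pm\,\chi^{\kund^0}_{\lund^0}\circ\psi^{\kund^1}_{\lund^1}\circ\varphi^{\kund^2}_{\lund^2}$, and the content is entirely in matching the two signs. The strategy is the standard one: both signs are obtained by composing the elementary splitting isomorphisms of $J^{\kund}_{\lund}$ into three factors in two different orders (first splitting off the bottom piece, or first splitting off the top piece), together with Koszul signs from commuting the degree-shifted maps past tensor factors; the associativity of $\sharp$ on multi-indices (inherited from \cite{biass}) guarantees the two orders produce the same three-fold decomposition, and the cocycle-type identity for $\heartsuit$ — namely $\heartsuit^{\kund^2}_{\kund^1\sharp\kund^0}+\heartsuit^{\kund^1}_{\kund^0}\equiv\heartsuit^{\kund^2\sharp\kund^1}_{\kund^0}+\heartsuit^{\kund^2}_{\kund^1}$, which expresses that $\heartsuit$ is the signature of the permutation identifying $\Vert(\kund)$ with $\Vert(\kund^0)\cup\Vert(\kund^1)\cup\Vert(\kund^2)$ and such signatures compose — together with the bookkeeping of the $y^i(x^j+y^j)$ and $(x^i+y^i)\deg$ terms, forces $s^{0,1}+s^{(01),2}\equiv s^{1,2}+s^{0,(12)}\pmod 2$ on each triple-splitting summand. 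I expect the proof to consist of carefully writing $s^{0,1}$ as a sum of ``elementary'' contributions, verifying the cocycle relation for each type of contribution separately (the $\heartsuit_k$ and $\heartsuit_l$ parts via the permutation-signature interpretation already recorded after \cite[Lem.~2.1]{biass}, the quadratic $y(x+y)$ parts by a direct bilinearity computation, and the $\deg$-dependent part using $\deg\varphi^{\kund^1\sharp\kund^2}_{\lund^1\sharp\lund^2}=\deg\varphi^{\kund^1}_{\lund^1}+\deg\varphi^{\kund^2}_{\lund^2}$), and concluding. This is the same sign-coherence bookkeeping already carried out for the gluing maps in Lemma~\ref{prop:or_glueing_maps}, which is the geometric shadow of exactly this algebraic associativity, so one may also phrase the verification as pulling the computation back along $g_{\dd^0,\dd^1}$.
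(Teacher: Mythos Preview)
Your proposal is correct and follows essentially the same approach as the paper: the paper also reduces associativity to the sign identity $s^{01,2}+s^{0,1}=s^{0,12}+s^{1,2}$ (phrased as ``$s^{0,1}$ is an \emph{associative} quantity''), verifies it term by term using the permutation-signature interpretation of $\heartsuit$ and the observation that $b^0\cdot c^1$ is associative whenever $b,c$ are additive, and dismisses the unit check as straightforward. Your write-up is somewhat more thorough on the preliminaries (finiteness, grading, the explicit unit computation), which the paper leaves implicit; the only cosmetic slip is that in your $\deg$-additivity line you write $\varphi^{\kund^1\sharp\kund^2}_{\lund^1\sharp\lund^2}$ where you mean the composite $(\psi\circ\varphi)^{\kund^{12}}_{\lund^{12}}$.
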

\begin{proof}
Let us prove associativity of composition, after some notational preparation. Assume we are given eight multi-indices such that $\kund = \kund^2 \sharp \kund^1 \sharp \kund^0$ and $\lund = \lund^0 \sharp \lund^1\sharp \lund^2 $. Denote, when it makes sense, $\kund^{ij} = \kund^j \sharp \kund^i$, $\lund^{ij} = \lund^i \sharp \lund^j$. Likewise, let $x^{ij}=v(\kund^{ij})= x^i + x^j$ and  $y^{ij}=v(\lund^{ij}) = y^i + y^j$. 
Let $s^{01,2}$ (resp. $s^{0,12}$) be the sign associated to 
$(\kund^{01},\lund^{01} )$,  $(\kund^2,\lund^2)$ and the relevant maps (resp. $ (\kund^0,\lund^0)$ and $ (\kund^{12},\lund^{12} )$). Likewise we denote $\heartsuit_k^{01,2}$, $\heartsuit_k^{0,12}$, $\heartsuit_l^{01,2}$, $\heartsuit_l^{0,12}$.

Say that a quantity $a^{0,1}$ is associative if $a^{01,2} + a^{0,1}=a^{0,12}+a^{1,2}$. We would like to show that $s^{0,1}$ is associative. 
Recall from \cite{biass} that $\heartsuit_k^{0,1}$ is defined as the signature of a permutation of vertices $\Vert (\kund^{01}) \to \Vert (\kund^0)\cup\Vert (\kund^1)$. 
Observe first that $\heartsuit_k^{0,1}$ is associative, since both $\heartsuit_k^{01,2}+ \heartsuit_k^{0,1}$ and $\heartsuit_k^{0,12}+ \heartsuit_k^{1,2}$ correspond to the same permutation, 
corresponding to the map 
\e
\Vert (\kund) \to \Vert (\kund^0)\cup\Vert (\kund^1)\cup\Vert (\kund^2).
\e
Likewise, $\heartsuit_l^{0,1}$ is associative. Observe now that if $b$ and $c$ are additive quantities, in the sense that $b^{01} = b^0 + b^1$ and $c^{01} = c^0 + c^1$, then $a^{0,1}= b^0 \cdot c^1$ is associative. Notice now that $s$ is a sum of quantities of the above forms, therefore it is associative. Associativity of composition follows:
\ea
((\varphi\circ \chi )\circ \psi)^{\kund}_{\lund}&= \sum_{ \begin{subarray}{c} \kund = \kund^2 \sharp \kund^{01} \\ \lund = \lund^{01} \sharp \lund^2 \end{subarray}} (-1)^{s^{01,2}} \cdot  (\varphi\circ \chi )^{\kund^{01}}_{\lund^{01}}\circ {\psi}^{\kund^2}_{\lund^2} \nonumber\\
 &= \sum_{ \begin{subarray}{c} \kund = \kund^2 \sharp \kund^{1}\sharp \kund^{0} \\ \lund = \lund^{0} \sharp  \lund^{1} \sharp \lund^2 \end{subarray}} (-1)^{s^{01,2} +s^{0,1}} \cdot  \varphi^{\kund^0}_{\lund^0} \circ \chi^{\kund^1}_{\lund^1} \circ {\psi}^{\kund^2}_{\lund^2} \nonumber\\
 &= \sum_{ \begin{subarray}{c} \kund = \kund^{12} \sharp \kund^{1} \\ \lund = \lund^{0} \sharp \lund^{12} \end{subarray}} (-1)^{s^{0,12}} \cdot  \varphi^{\kund^{0}}_{\lund^{0}}\circ (\chi \circ {\psi})^{\kund^{12}}_{\lund^{12}} \nonumber\\
 &= (\varphi\circ (\chi \circ \psi))^{\kund}_{\lund} .\nonumber
\ea
The fact that $id_A$ are identities is straightforward to check.
\end{proof}

Therefore, it has an associated $dg$-category of chain complexes $\Ch_{\Rudmod}$, on which the $dg$ nerve construction applies:

\begin{defi}\label{def:PreBialg}
Let the weak Kan complex $\PreBialg = N_{dg} (\Ch_{\Rudmod})$.
\end{defi}
The above could serve as a target for Theorem~\ref{th:LieMon_to_fBialg}, however it misses the important relationship with \Ainf -(co)algebras and bialgebras that the Morse complexes have (Propositions~3.5 and 3.7 in \cite{biass}). Therefore we will consider a subcategory of it. Let us first unravel the definition.

The objects of $\Ch_{\Rudmod}$ are pairs $(A, \alpha)$, with $\alpha \colon A \to A$ of degree $-1$, i.e. $\abs{ \alpha^{\kund}_{\lund} } = v(\kund) + v(\lund) -1 = \dim J(0)^{\kund}_{\lund} $; and such that $\alpha \circ \alpha = 0$, i.e.
\e\label{eq:alpha_circ_alpha=0}
0= \sum (-1)^{s^{0,1}} \alpha^{0} \circ \alpha^{1},
\e
where we sum over all splittings of $\kund$ and $\lund$, and $\alpha^{i} = \alpha^{\kund^{i}}_{\lund^{i}} $. In this case,
\ea
s^{0,1} &= \heartsuit_k + \heartsuit_l + y^0 (x^1 + y^1) + (x^0 + y^0)  \\
&= 1+ \rho^{0,1},\nonumber
\ea
with $\rho^{0,1}$ the orientation of the gluing map  (\ref{eq:rho}), for $n_0=n_1=0$. Therefore (up to an overall minus sign), (\ref{eq:alpha_circ_alpha=0}) is the coherence relation of $f$-bialgebras as in \cite[Def.~3.2]{biass}.

If $(A, \alpha)$ and $(B, \beta)$ are as above, the space of morphisms $\varphi\colon (A, \alpha) \to (B, \beta) $ consists of families of maps $\varphi^{\kund}_{\lund} \colon  (A^{b})^{\abs{\kund}} \to (B^{\abs{\lund}})^{a}$, and is equipped with the differential $\partial(\varphi) = \beta \circ\varphi  - (-1))^{\deg \varphi} \varphi \circ \alpha  $, i.e.
\e\label{eq:partial_varphi}
\partial(\varphi)^{\kund}_{\lund} = \sum (-1)^{s^{0,1}_{\beta}} \beta^{0} \circ\varphi^1  + (-1))^{1+ \deg \varphi + s^{0,1}_{\alpha}} \varphi^{0} \circ \alpha^1 , 
\e
with the same conventions as before, and
\ea
s^{0,1}_{\beta}  &=  \heartsuit_k + \heartsuit_l + y^0 (x^1 + y^1) + (x^0 + y^0) \cdot \deg (\varphi), \\
 s^{0,1}_{\alpha} &=  \heartsuit_k + \heartsuit_l + y^0 (x^1 + y^1) + (x^0 + y^0).
\ea
In particular, if $\deg \varphi = 0$, i.e. $\abs{\varphi^{\kund}_{\lund} } = v(\kund) + v(\lund)$; (\ref{eq:partial_varphi})  is the coherence relation of morphisms of  $f$-bialgebras as in \cite[Def.~3.9]{biass}.

Now, an $n$-simplex in the $dg$-nerve consists in chain complexes $(A_{0}, \alpha_{0})$, ..., $(A_{n}, \alpha_{n})$ together with, for each $\sigma = [\sigma_0, \ldots , \sigma_m]\subset [0, \ldots , n]$ of $\dim \sigma \geq 1$, morphisms $\varphi_{\sigma} \colon A_{\sigma_m} \to A_{\sigma_0}$ of degree $m-1$, i.e. 

\e
\abs{(\varphi_{\sigma})^{\kund}_{\lund} } = v(\kund) + v(\lund) +m -1 = \dim J(m)^{\kund}_{\lund},
\e
and such that 
\e\label{eq:cohrel_varphi_sigma}
\partial (\varphi_{\sigma}) = \sum_{i=1}^{m-1} (-1)^i (\varphi_{\partial_i \sigma} - \varphi_{\sigma_{\leq i}} \circ \varphi_{\sigma_{\geq i}}).
\e
If $\sigma = [\sigma_0]$, letting $\varphi_{\sigma} = \alpha_{\sigma_0}$, moving $\partial (\varphi_{\sigma})$ to the right and absorbing it in the second sum (corresponding to $\dim \sigma^0 = 0$ or  $\dim \sigma^1 = 0$); the equation becomes, for all $\ddhat = (\sigma,\kund, \lund)$:
\e\label{eq:cohrel_R_ddhat}\tag{$R_{\ddhat}$}
0=\sum_{i=1}^{m-1} (-1)^i \varphi_{\partial_i \ddhat} +\sum_{\ddhat = \ddhat^1 \sharp \ddhat^0} (-1)^{\rho^{0,1}}   \varphi_{\ddhat^0} \circ \varphi_{\ddhat^1}  ,
\e
where  $\partial_i \ddhat = (\partial_i \sigma, \kund, \lund)$, and $\ddhat^f = (\sigma^f, \kund^f, \lund^f)$.

\begin{propdef}\label{propdef:fBialg}

Let $\fBialg$ be the simplicial subset of  $\PreBialg$ such that its $n$-simplices are $\left\lbrace A_i, \varphi_{\sigma} \right\rbrace$ as above, satisfying furthermore that:
\begin{itemize}
\item If $\dim \sigma = 0$, $\varphi_{\sigma}$ satisfies the simplification relations of $f$-bialgebras, which we recall below.
\item If $\dim \sigma = 1$, $\varphi_{\sigma}$ satisfies the simplification relations of morphisms of $f$-bialgebras, which we recall below.
\end{itemize}
This also defines a weak Kan complex.

The simplification relations  for $f$-bialgebras are \cite[Def.~3.2]{biass}:
\begin{itemize}
\item For $a,b\geq 1$, $\alpha^{\One_a}_{\One_b}$ coincides with the tensor product differential induced by the differential $\partial_A = \alpha^1_1$ of $A$. We will refer to this condition as $(T^a_b)$.
\item If $\kund = \One_a$ and $\lund = (1, \ldots 1, l_j , 1, \ldots, 1)$, with $l_j\geq 2$, then
\e\label{eq:V_a_lund}\tag{$V^{a}_{\lund}$}
{\alpha}^{\kund}_{\lund} \simeq id_{(A^{j-1})^a} \otimes {\alpha}^{\kund}_{l_j} \otimes id_{(A^{b-j})^a},
\e
where $\simeq$ is in the sense of (\ref{eq:iso_split_b}).
\item Likewise, if  $\lund = \One_b$ and $\kund = (1, \ldots 1, k_i , 1, \ldots, 1)$, with $k_i\geq 2$, then
\e\label{eq:V_kund_b}\tag{$V^{\kund}_{b}$}
{\alpha}^{\kund}_{\lund} \simeq id_{(A^{b})^{i-1}} \otimes {\alpha}^{k_i}_{\lund} \otimes id_{(A^{b})^{a-i}},
\e
where $\simeq$ is in the sense of (\ref{eq:iso_split_a}).
\item If either $\kund = \One_a$ and $\lund$ has at least two entries $\geq 2$, or vice versa, then ${\alpha}^{\kund}_{\lund} =0$. We will refer to these conditions as $(V^{a}_{\lund})$ and $(V^{\kund}_{b})$ as well.

\item (vertical tree deletion) Suppose that $\lund$ is an almost vertical forest, i.e. only with 1 and 2. Let $\kund$ have a vertical tree at position $i$, i.e. $k_i = 1$, and let $\widehat{\kund} = (k_1, \ldots , \widehat{k_i} , \ldots, k_a)$, then
\e\label{eq:D_kund_i_lund}\tag{$D^{\kund,i}_{\lund}$}
{\alpha}^{\kund}_{\lund} \simeq {\alpha}^{\widehat{\kund}}_{\lund} \otimes  {\widetilde{\alpha}}^{1}_{\lund},
\e
where ${\widetilde{\alpha}}^{1}_{\lund} =  {\widetilde{\alpha}}^{1}_{l_1} \otimes \cdots \otimes {\widetilde{\alpha}}^{1}_{l_b}$, with ${\widetilde{\alpha}}^{1}_{1}= id_A$ and ${\widetilde{\alpha}}^{1}_{2}= \alpha^1_2$.

Likewise, if $\kund$ is an almost vertical forest. Let $\lund$ have a vertical tree at position $i$, i.e. $l_i = 1$, and let $\widehat{\lund} = (l_1, \ldots , \widehat{l_i} , \ldots, l_b)$, then
\e\label{eq:D_kund_lund_i}\tag{$D^{\kund}_{\lund,i}$}
{\alpha}^{\kund}_{\lund} \simeq {\alpha}^{\kund}_{\widehat{\lund}} \otimes  \widetilde{\alpha}^{\kund}_{1},
\e
where ${\widetilde{\alpha}}^{\kund}_{1} =  {\widetilde{\alpha}}^{k_1}_{1} \otimes \cdots \otimes {\widetilde{\alpha}}^{k_a}_{}$, with ${\widetilde{\alpha}}^{1}_{1}= id_A$ and ${\widetilde{\alpha}}^{2}_{1}= \alpha^2_1$.
\end{itemize}

The simplification relations for morphisms of $f$-bialgebras are \cite[Def.~3.9]{biass}:

\begin{itemize}
\item If $\lund$ is vertical, then
\e\label{eq:W_kund_b}\tag{$W^{\kund}_{b}$}
\varphi^{\kund}_{\lund}  \simeq 
\varphi^{k_1}_{\lund}  \otimes \cdots \otimes  \varphi^{k_a}_{\lund}.
\e

\item If $\kund$ is vertical, then
\e\label{eq:W_a_lund}\tag{$W^{a}_{\lund}$}
\varphi^{\kund}_{\lund}  \simeq  \varphi^{\kund}_{l_1} \otimes \cdots \otimes \varphi^{\kund}_{l_b} .
\e
\end{itemize}

\end{propdef}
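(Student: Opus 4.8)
The plan is to check two things: that $\fBialg$ is a simplicial subset of $\PreBialg=N_{dg}(\Ch_{\Rudmod})$, and that it is a weak Kan complex. For the second point I would bootstrap off the fact, recalled in Section~\ref{ssec:infty_cat_background}, that $dg$ nerves are weak Kan complexes: one fills inner horns already in $\PreBialg$ and then arranges the filler to land in $\fBialg$.

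\emph{That $\fBialg$ is a simplicial subset.} By construction $\fBialg$ is carved out of $\PreBialg$ by conditions imposed only on the $0$- and $1$-dimensional faces of a simplex (the simplification relations for $f$-bialgebras, resp. for their morphisms). Stability under the face maps $\partial_j$ is then immediate, since a $0$- or $1$-dimensional face of $\partial_j\tau$ is already such a face of $\tau$. Under a degeneracy $s_j$ the only new low-dimensional faces produced are a repeated object, still an $f$-bialgebra, and on the doubled edge the identity morphism $id_{A_j}$; so it remains to observe that $id_A$ satisfies \eqref{eq:W_kund_b} and \eqref{eq:W_a_lund}, which is transparent from its defining formula: both sides of each relation vanish unless $\kund$ and $\lund$ are both vertical, in which case both equal $id_{(A^b)^a}$.

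\emph{Horn filling.} Fix an inner horn $\Lambda^n_i\to\fBialg$, $1\le i\le n-1$ (so $n\ge 2$), and pick any filler $\tau$ of it in $\PreBialg$. The data of $\tau$ not already specified by the horn are $\varphi_{\{0,\dots,n\}}$ and $\varphi_{\{0,\dots,n\}\setminus\{i\}}$; the first index set has cardinality $n+1\ge 3$, and the second has cardinality $n$, which is $\ge 3$ precisely when $n\ge 3$. Since the simplification relations only constrain $\varphi_\sigma$ with $|\sigma|\le 2$, it follows that for $n\ge 3$ \emph{any} $\PreBialg$-filler automatically lies in $\fBialg$: all its $0$- and $1$-dimensional faces coincide with those of the horn, which already lies in $\fBialg$.

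There remains the case $n=2$, where $i=1$ and the new datum $\varphi_{\{0,2\}}$ is a $1$-simplex, on which the morphism simplification relations must hold. The horn $\Lambda^2_1$ supplies morphisms of $f$-bialgebras $\varphi_{[0,1]},\varphi_{[1,2]}$; I would fill it by setting $\varphi_{[0,2]}:=\varphi_{[0,1]}\circ\varphi_{[1,2]}$ and $\varphi_{[0,1,2]}:=0$. The coherence relation \eqref{eq:cohrel_varphi_sigma}, which here reads $\partial(\varphi_{[0,1,2]})=-\bigl(\varphi_{[0,2]}-\varphi_{[0,1]}\circ\varphi_{[1,2]}\bigr)$, then holds by construction, and as a composite of two degree-$0$ cycles $\varphi_{[0,2]}$ is again a morphism of $f$-bialgebras in the sense of \eqref{eq:partial_varphi}. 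The one step that requires genuine work — and which I expect to be the main obstacle — is to check that $\varphi_{[0,2]}$ still satisfies \eqref{eq:W_kund_b} and \eqref{eq:W_a_lund}, i.e. that these relations are stable under the composition of $\Rudmod$. I would prove this by unwinding the composition formula: when $\lund$ (resp. $\kund$) is vertical, every term $\psi^{\kund^0}_{\lund^0}\circ\varphi^{\kund^1}_{\lund^1}$ occurring in it has $\lund^0=\lund^1=\lund$ (resp. $\kund^0=\kund^1=\kund$) vertical, so applying the relation to each factor and regrouping the tensor factors via \eqref{eq:iso_split_a} and \eqref{eq:iso_split_b}, while tracking the $\heartsuit$- and Koszul contributions to the sign \eqref{eq:sign_s}, reproduces the asserted tensor decomposition of $\varphi_{[0,2]}$. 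Granting this, the filler lies in $\fBialg$, so $\fBialg$ is a weak Kan complex.
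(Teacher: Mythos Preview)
Your approach is essentially the same as the paper's: both reduce the weak Kan property to the statement that the simplification relations \eqref{eq:W_kund_b}, \eqref{eq:W_a_lund} are closed under the composition of $\Rudmod$, which is exactly the computation you flag as ``the main obstacle''. One simplification the paper exploits that you may want to use: when $\lund=\One_b$ (and dually when $\kund=\One_a$), all the signs $s^{0,1}$ in the composition formula vanish, so the regrouping of tensor factors goes through without any $\heartsuit$- or Koszul bookkeeping.
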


\begin{proof}
Since the horn-filling property is satisfied in $\PreBialg$, and since the extra assumptions are only on objects and 1-morphisms; 
 it is enough to show that if $\varphi$ and $\psi$ are composable 1-morphisms satisfying the simplification relations, then so does $\varphi\circ\psi$.

Assume $\lund = \One_b$, observe that in this case all signs $s^{0,1}$ are trivial. Then,
\ea
(\varphi\circ\psi)^{\kund}_{\One_b} &= \sum_{\kund = \kund^1 \sharp \kund^0}  \varphi^{\kund^0}_{\One_b} \circ \psi^{\kund^1}_{\One_b}  \nonumber\\  &\simeq \sum_{\kund = \kund^1 \sharp \kund^0} ( \varphi^{k^0_1}_{\One_b} \otimes \cdots \otimes \varphi^{k^0_{a^0}}_{\One_b}  ) \circ ( \psi^{k^1_1}_{\One_b} \otimes \cdots \otimes \psi^{k^1_{a^1}}_{\One_b}  )\nonumber\\ 
&= \left( \sum_{(k_1) = \kund^1_1 \sharp (k^0_1)}  \varphi^{k^0_1}_{\One_b} \circ \psi^{\kund^1_1}_{\One_b}  \right) \otimes \cdots \otimes  \left( \sum_{(k_{a^0}) = \kund^1_{a^0} \sharp (k^0_{a^0})}  \varphi^{k^0_{a^0}}_{\One_b} \circ \psi^{\kund^1_{a^0}}_{\One_b} \right)   \nonumber\\
&=  (\varphi\circ\psi)^{k_1}_{\One_b} \otimes \cdots \otimes  (\varphi\circ\psi)^{k_a}_{\One_b} , \nonumber
\ea
where from the second to the third line we wrote $\kund^0 = (k^0_1, \ldots , k^0_{a^0})$, and decomposed $\kund^1$ to  $\kund^1_1$, ..., $\kund^1_{a^0}$, where $\kund^1_{i}$ corresponds to the sub-forest growing on top of the $i$-th tree of $\kund^0$.

The proof of the simplification relation when $\kund = \One_a$ is similar.
\end{proof}

\subsection{The weak Kan complexes $\uBimodact$ and $\dBimodact$}
\label{ssec:uBimodact_dBimodact}

We now define $\Rudubimod$ and $\uBimodact$, which are $u$-bimodule counterparts of $\Rudmod$ and $\fBialg$. One can also define $\Ruddbimod$ and $\dBimodact$ analogously by exchanging the roles of $\kund$ and $\lund$. As this would be for Morse cochain complexes, we leave the details to the contravariant readers.

We denote bimodule multi-indices $\kundb = (\kundl |\epsilon | \kundr)$. 
Let also $\dd = (\kundb,\lund)$, $v(\dd) = v(\kund) + v(\lund)$.

\begin{propdef}\label{propdef:Rudubimod}

Let $\Rudubimod$ be the graded linear category whose:
\begin{itemize}
\item Objects consist of triples $(A,M,B)$ of graded $R$-modules,

\item Given two triples $T=(A,M,B)$ and  $U=(C,N,D)$, let 
\ea
&\hom_{\Rudubimod}(T,U ) \\
&:=\prod_{\dd = (\kundb, \lund)} \hom_{R\text{-}mod}\left( In(\dd) , Out(\dd) \right)[- v(\dd)]\text{, with} \\
In&(\dd) = (A^{b})^{\abs{\kundl}} \otimes (M^{b})^{\epsilon} \otimes (B^{b})^{\abs{\kundr}} , \\
Out&(\dd) = (C^{\abs{\lund}})^{a^l}\otimes (N^{\abs{\lund}})^{\epsilon}\otimes (D^{\abs{\lund}})^{a^r}
\ea

\item Given $\varphi = \left\lbrace \varphi^{\kundb}_{\lund} \right\rbrace \colon T\to U$ and $\psi = \left\lbrace \psi^{\kundb}_{\lund} \right\rbrace \colon U\to V$, define $\psi\circ \varphi \colon T\to V$ by:
\e
(\psi\circ \varphi)_{\dd} = \sum_{ \dd = \dd^1 \sharp \dd^0} (-1)^{s^{0,1}} \cdot  \psi_{\dd^0}\circ {\varphi}_{\dd^1},
\e
where $s^{0,1}$ is given by (\ref{eq:sign_s}).

\item The identity morphism of $T$ is:
\e
(id_T)_{\dd} = \begin{cases} id_{In(\dd)}  &\text{if }\kund = \One_a \text{ and }\lund = \One_b ,\\
   0      &\text{otherwise. } 
 \end{cases}
\e
\end{itemize}

\end{propdef}
\begin{proof}Same as Proposition-Definition~\ref{propdef:udRmod}.
\end{proof}

\begin{defi}\label{def:uBimodact} Let $\PreuBimodact = N_{dg}(Ch(\Rudubimod))$. Let $\uBimodact$ be the simplicial subset of it, with $n$-simplices $\left\lbrace T_i, \mu_\sigma \right\rbrace$ such that
\begin{itemize}
\item if $\dim \sigma = 0$, $\mu_\sigma$ satisfies the simplification relations of $u$-bimodules given in \cite[Def.~3.13]{biass},
\item if $\dim \sigma = 1$, $\mu_\sigma$ satisfies the simplification relations of morphisms of $u$-bimodules given in \cite[Def.~3.15]{biass}.
\end{itemize}
\end{defi}

\begin{remark} The assignment $(A,M,B) \mapsto A\oplus M \oplus B$ gives a linear functor $\Rudubimod \to \Rudmod$, which induces a simplicial map $\uBimodact \to \fBialg$.

\end{remark}

\subsection{The weak Kan complexes $\fAlg$ and $\fCoalg$}
\label{ssec:fAlg_fCoalg}

Consider the following two non-full non-unital\footnote{in the sense that the identity morphisms are different} subcategories of $\Rudmod$. Loosely speaking, $\Rumod$ (resp. $\Rdmod$) is obtained by setting $\lund = (1)$ (resp. $\kund = (1)$).

\begin{defi}
Let the category of \emph{ascending (foresty) $R$-modules} $\Rumod$ be such that:
\begin{itemize}
\item Objects consist of graded $R$-modules,

\item Given two such objects $A$ and $B$, let
\e
\hom_{\Rumod}(A,B) :=\prod_{\kund} \hom_{R\text{-}mod}\left( A^{\abs{\kund}} , B^{a}\right)[- v(\kund) ] .
\e

\item Given $\varphi = \left\lbrace \varphi^{\kund} \right\rbrace \colon A\to B$ and $\psi = \left\lbrace \psi^{\kund} \right\rbrace \colon B\to C$, define $\psi\circ \varphi \colon A\to C$ by:
\e
(\psi\circ \varphi)^{\kund} = \sum_{ \begin{subarray}{c} \kund = \kund^1 \sharp \kund^0  \end{subarray}} (-1)^{s^{0,1}} \cdot  \psi^{\kund^0}\circ {\varphi}^{\kund^1},
\e
where, using notations as in Proposition~\ref{prop:or_glueing_maps},

\e
s^{0,1} = \heartsuit_k^{0,1}  + x^0  \cdot \deg (\varphi^{\kund^1}).
\e

\item The identity morphism of $A$ is:
\e
(id_A)^{\kund} = \begin{cases} id_{A^a}  &\text{if }\kund = \One_a  ,\\
   0      &\text{otherwise. } 
 \end{cases}
\e
\end{itemize}

Likewise, define the category of \emph{descending (foresty) $R$-modules} $\Rdmod$, with same objects and such that:
\ea
\hom_{\Rdmod}(A,B) &:=\prod_{ \lund} \hom_{R\text{-}mod}\left( A^{b} , B^{\abs{\lund}})\right)[-  v(\lund)] , \\
(\psi\circ \varphi)_{\lund} &= \sum_{ \begin{subarray}{c} \lund = \lund^0 \sharp \lund^1 \end{subarray}} (-1)^{s^{0,1}} \cdot  \psi_{\lund^0}\circ {\varphi}_{\lund^1}\text{, with}\\
s^{0,1} &= \heartsuit_l^{0,1} + y^0 \cdot (y^1 + \deg (\varphi_{\lund^1}) ) , \\
(id_A)_{\lund} &= \begin{cases} id_{A^b}  &\text{if }\lund = \One_b ,\\
   0      &\text{otherwise. } 
 \end{cases}
\ea
\end{defi}

\begin{defi} Let $\PreAlg = N_{dg}(\Ch_{\Rumod})$, and let $\fAlg$ be the simplicial subset such that:
\begin{itemize}
\item Objects $(A, \alpha)$ satisfy:
\e
\alpha^{\kund} = \begin{cases} \sum_{i=1}^{a} (id)^{i-1} \otimes \alpha^1 \otimes  (id)^{i-1} &\text{if }\kund = \One_a, \\
(id)^{i-1} \otimes \alpha^{k_i} \otimes  (id)^{i-1} &\text{if }\kund = (1, \ldots 1, k_i ,  1, \ldots 1 ), \\
0 &\text{if }\tilde{n}(\kund) \geq 2 .
\end{cases}
\e
That is, the $\alpha^{\kund}$ are completely determined by the $\alpha^{k}$, which form an \Ainf -algebra, with the sign convention of \cite[Prop.~3.5]{biass}.
\item 1-morphisms $\varphi\colon A\to B$ satisfy $\varphi^{\kund} = \varphi^{k_1} \otimes \cdots \otimes \varphi^{k_a} $. That is, the $\alpha^{\kund}$ are completely determined by the $\varphi^{k}$, which form an \Ainf -morphism, with the sign convention of \cite[Prop.~3.11]{biass}.
\end{itemize}

Likewise, let $\PreCoalg = N_{dg}(\Ch_{\Rdmod})$, and let $\fCoalg$ be the simplicial subset whose objects and 1-morphisms satisfy similar relations.
\end{defi}

One has forgetful functors, and their induced forgetful $\infty$-functors:
\e
\xymatrix{
\Rudmod \ar[r]\ar[d]  & \Rumod \ar[d] & \fBialg \ar[r]\ar[d]  & \fAlg \ar[d]\\
\Rdmod \ar[r] & \Rmod & \fCoalg \ar[r] & N_{dg}(\Ch_{\Rmod}) .
}
\e

\section{Tautological families of graphs, perturbations}
\label{sec:tautol_fam_pert}

\subsection{Tautological families of graphs}
\label{ssec:Tautological_families_of_graphs}
We now construct the source space for our perturbations, called the ``tautological grafted biforest''. It is a space $\Gcal$ endowed with a projection to $J = \coprod_{n, \kund, \lund} J(n)^{\kund}_{\lund}$, such that the fiber over a point is a grafted graph corresponding to that point. We construct it step by step.

First, fix $\kund$, and let $\varphi$ be an isomorphism class of trivalent forests of type $\kund$. Let $U^{\kund}_{\varphi} \subset U^{\kund}$ stand for the corresponding chamber of ascending forests of the form $(\varphi,h)$, with some height function $h\colon \Vert (\varphi) \to \rr$. 

Let $e\in \Edges(\varphi)$, recall that it has a source  $s(e) \in \Vert(\varphi) \cup \Leaves(\varphi) $ and a target $t(e) \in \Vert(\varphi) \cup \Roots(\varphi) $. Let the corresponding heights 
\ea
h_s(e) = \begin{cases} h(s(e)) \\ +\infty \end{cases}\text{and }\ \  & h_t(e) = \begin{cases} h(t(e)) \\ -\infty \end{cases} ,
\ea
where these quantities are equal to $\pm \infty$ when the corresponding endpoint is not a vertex. 
Define
\e
\Ucal^{\kund}_{\varphi, e} := \left\lbrace (h,z) \ : \  (\varphi,h)\in U^{\kund}_{\varphi} \text{ and } h_t(e) \leq z \leq h_s(e)  \right\rbrace  ,
\e
so that $\Ucal^{\kund}_{\varphi, e}$ projects to $U^{\kund}_{\varphi}$, and the fiber over $(\varphi,h)$ is the interval $[ h_t(e) , h_s(e)  ]$ (excluding the bound when infinite). Let then 
\e
\Ucal^{\kund}_{\varphi} := \coprod_e \Ucal^{\kund}_{\varphi, e} ,
\e
and let the \emph{tautological ascending forest}:
\e
\Ucal^{\kund} := \left(  \coprod_{\varphi} \Ucal^{\kund}_{\varphi} \right) /\sim ,
\e
where if $(\varphi,h) = (\varphi',h')\in U^{\kund}$, and an edge $e$ of $\varphi$ corresponds to an edge $e'$ of $\varphi'$ we identify the corresponding intervals.

The space $\Ucal^{\kund}$ projects to $U^{\kund}$, has a height function $h^{\kund} \colon\Ucal^{\kund} \to \rr$ corresponding to projecting to the $z$ coordinate, and the fiber over $(\varphi,h)$ is a union of intervals $[ h_t(e) , h_s(e)  ]$ for each edge $e$. It corresponds to the ascending graph, disconnected at its vertices.

Likewise, one has a \emph{tautological descending forest} $(\Dcal_{\lund}, h_{\lund}) = (\Ucal^{\lund}, -h^{\lund})$, with the same projection to $D_{\lund} \simeq U^{\lund}$. Its fibers correspond to the (disconnected) descending graph.

Consider now the fiber product with respect to height functions: 
\e
\widetilde{\Gcal}^{\kund}_{\lund} = \Ucal^{\kund} \times_{\rr} \Dcal_{\lund},
\e
which inherits a height function denoted $\widetilde{h}^{\kund}_{\lund}$, and a projection to $J^{\kund}_{\lund} = U^{\kund} \times D_{\lund} $. Observe that the fibers correspond to Henriques intersections of the corresponding ascending and descending forests.

We now want to form ${\Gcal(n)}^{\kund}_{\lund}$. Consider first the case $n=0$, and assume that $c(\kund, \lund) = 1$ (otherwise let ${\Gcal(n)}^{\kund}_{\lund} = \emptyset$). Observe that the $\rr$ action on $J^{\kund}_{\lund}$ lifts to $\widetilde{\Gcal}^{\kund}_{\lund}$, Let ${\Gcal(0)}^{\kund}_{\lund}$ be the quotient
\e
{\Gcal(0)}^{\kund}_{\lund} = \widetilde{\Gcal}^{\kund}_{\lund} /\rr .
\e

Assume now that $n\geq 1$. Consider first
\e
\widetilde{\Gcal}(n)^{\kund}_{\lund} = I^{n-1} \times \widetilde{\Gcal}^{\kund}_{\lund} .
\e
Let  ${\Hcal(n)}^{\kund}_{\lund} \subset \widetilde{\Gcal}(n)^{\kund}_{\lund}$ correspond to the hypersurface of grafting heights, defined as follows. If $\Lund \in I^{n-1}$, let 
\e
H(\Lund) := \left\lbrace h_{01},  h_{12}, \ldots , h_{(n-1)n} \right\rbrace \subset \rr
\e
stand for the corresponding set of grafting heights, as defined in (\ref{eq:grafting_heights}). Let then 
\e
{\Hcal(n)}^{\kund}_{\lund} := \left\lbrace (\Lund, g) \ : \ \widetilde{h}^{\kund}_{\lund}(g) \in H(\Lund) \right\rbrace.
\e
Define then 
\e
{\Gcal}(n)^{\kund}_{\lund} =  \widetilde{\Gcal}(n)^{\kund}_{\lund} \setminus\!\!\!\setminus {\Hcal(n)}^{\kund}_{\lund},
\e
where the symbol $\setminus\!\!\setminus$ stands for ``cutting'', i.e. removing and gluing back twice.

To summarize, we have constructed a space ${\Gcal}(n)^{\kund}_{\lund}$ with a height function 
\e
{h}(n)^{\kund}_{\lund} \colon {\Gcal}(n)^{\kund}_{\lund} \to \rr,
\e
and a projection ${\Gcal}(n)^{\kund}_{\lund} \to {J}(n)^{\kund}_{\lund}$ whose fibers correspond to the Henriques intersection graphs, disconnected at vertices and grafting levels.

Furthermore, cutting along $ {\Hcal(n)}^{\kund}_{\lund}$ divides  ${\Gcal}(n)^{\kund}_{\lund}$ into levels ${\Gcal}(n;i)^{\kund}_{\lund}$, for $i = 0, \ldots, n$, which are mapped to $[ h_{(i-1)i}, h_{i(i+1)} ]$ by ${h}(n)^{\kund}_{\lund}$.

\subsection{Perturbations}
\label{ssec:Perturbations}
Let us now define the perturbation spaces that we will use in order to prove Theorem~\ref{th:LieMon_to_fBialg}.

Let us write
\e
J(n) = \coprod_{ \kund, \lund} J(n)^{\kund}_{\lund} \ \ \ ;\ \ \ \Gcal(n) = \coprod_{ \kund, \lund} \Gcal(n)^{\kund}_{\lund} ,
\e
and let the unions over all sub-simplices 
\e
J([n]) = \coprod_{ \sigma = [\sigma_0, \ldots , \sigma_m] \subset [n] } J_{\sigma} (m) \ \ \ ;\ \ \ \Gcal([n]) = \coprod_{ \sigma = [\sigma_0, \ldots , \sigma_m] \subset [n] } \Gcal_{\sigma}(m),
\e
where $J_{\sigma} (m) \simeq J (m)$ and $\Gcal_{\sigma}(m) \simeq \Gcal(m)$.

\begin{defi} Let $\LieMon'$ stand for the category whose objects are triples $(G,f,V)$ of a compact Lie monoid $G$, endowed with a Morse function $f$ and a Palais-Smale pseudo-gradient $V$; and whose morphisms are smooth morphisms as in $\LieMon$.
\end{defi}

Let $S\in N(\LieMon')_n$ be an $n$-simplex. That is $S = \lbrace G_i,f_i,V_i, \phi_{ij} \rbrace$, with $i=0, \ldots , n-1$; $\phi_{i(i+1)} \colon G_{i+1} \to G_i$, and for $i<j$,  
\e
\phi_{ij} = \phi_{i(i+1)} \circ \phi_{(i+1)(i+2)} \circ \cdots \circ \phi_{(j-1)j} \colon G_{j} \to G_i .
\e

Our perturbation space involved in the moduli spaces  in the next section is the subspace
\e
\Xfrak (S) \subset C^{\infty} ( \Gcal([n]) , \Xfrak (G_0) \sqcup \cdots \sqcup  \Xfrak (G_n) )
\e
of domain-dependent vector fields $V$ safisfying several conditions that we describe below.

\begin{remark}
Observe that $\Gcal([n])$ is a disjoint union of manifolds with corners, we consider smooth maps with respect to these structures.
\end{remark}

\noindent\textbf{Level condition:} $V$ maps a level $\Gcal_{\sigma}(m;i)$ of $\Gcal_{\sigma}(m)$ 
to the corresponding space $\Xfrak (G_{\sigma_i})$.

The next condition is to guarantee that breaking occurs as one would expect. Let  $\Gcal^{\rm br}([n]) \subset \Gcal([n])$ (resp. $\Gcal^{\rm pert}([n]) \subset \Gcal([n])$) corresponds to points which are at distance $\geq 1$ (resp. $\leq 1$) of the endpoints of the interval they belong, in their corresponding fiber.

\noindent\textbf{Breaking condition:} $V$ maps $\Gcal^{\rm br}_\sigma(m;i)$ to the given pseudo-gradient $V_{\sigma_i}$. That is, we will only perturb on $\Gcal^{\rm pert}([n])$, as in \cite{abouzaid2011topological,Mazuir1}.

The next condition is for when the height of a level shrinks to zero. Recall that if $m\geq 2$, $1\leq i \leq m-1$, as unoriented manifolds,
\e
\partial_i J_{\sigma}(m)^{\kund}_{\lund} \simeq J_{\partial_i \sigma}(m-1)^{\kund}_{\lund}.
\e

Under this identification, we have that 
\e
{\Gcal_{\sigma}(m)^{\kund}_{\lund}}_{| \partial_i J(m)  } \simeq \Gcal_{\partial_i \sigma}(m-1)^{\kund}_{\lund} \sqcup {\Gcal_{\sigma}(m;i)^{\kund}_{\lund}}_{| \partial_i J(m)  },
\e
where the fibers of ${\Gcal_{\sigma}(m;i)^{\kund}_{\lund}}_{| \partial_i J(m)  }$ are a finite number of points.

\noindent\textbf{Shrinking condition:} $V$ commutes with the inclusions 
\e
\Gcal_{\partial_i \sigma}(m-1)^{\kund}_{\lund} \hookrightarrow {\Gcal_{\sigma}(m)^{\kund}_{\lund}}_{| \partial_i J(m)  } .
\e

The above conditions guarantee that we can recognize the terms $\partial (\varphi_\sigma )^{\kund}_{\lund}$ and $ (\varphi_{\partial_i \sigma} )^{\kund}_{\lund}$ in the coherence relations (\ref{eq:cohrel_varphi_sigma}). For the remaining summands, let us introduce the following terminology.
\begin{defi}\label{def:Lambda_spacing} Consider a grafted biforest $p = (\Lund , B) \in J(n )^{\kund}_{\lund} $ (possibly ungrafted, i.e. $n=0$ and $\Lund = \emptyset$). For $\Lambda >1$, say that $p$ is $\Lambda$-spaced at height $H\in \rr$ if 
\e
{\rm dist} ( \left\lbrace H\right\rbrace, H(\Lund) \cup h(\Vert B) )\geq \Lambda.
\e 
\end{defi}
In this case, the perturbation neighborhoods can be written as disjoint unions, with $\ddhat= (\sigma, \kund, \lund)$:
\e
\left(\Gcal_{\ddhat}^{\rm pert} \right)_p = \left(\Gcal_{\ddhat}^{\rm pert} \right)_p^0  \sqcup \left(\Gcal_{\ddhat}^{\rm pert} \right)_p^1 , 
\e
each part respectively corresponding to the part below and above $H$ (with respect to the height function).

Furthermore, the level $H$ induces a splitting $\ddhat= \ddhat^1 \sharp \ddhat^0$, by which we mean:
\begin{itemize}
\item $\sigma^0 = [\sigma_0, \ldots , {\sigma}_i ]$ and $ \sigma^1 = [{\sigma}_i , \ldots , \sigma_m]$, for some $0\leq i \leq m = \dim \sigma$,
\item $\kund =\kund^1 \sharp \kund^0$, and $\lund =\lund^0 \sharp \lund^1$.
\end{itemize}  
The grafted biforest $p$ splits accordingly to $p^{0}\in J_{\ddhat^0}$ and $p^{1}\in J_{\ddhat^1}$. 
One then has, \emph{in most cases}, identifications
\ea
\left(\Gcal_{\ddhat}^{\rm pert} \right)_p^0  &\simeq \left(\Gcal_{\ddhat^0}^{\rm pert} \right)_{p^0 }, \label{eq:Gcal_pert_0} \\
\left(\Gcal_{\ddhat}^{\rm pert} \right)_p^1  &\simeq  \left(\Gcal_{\ddhat^1}^{\rm pert} \right)_{p^1 } .\label{eq:Gcal_pert_1}
\ea
This is true as long as, for each  floor $f=0,1$, $\dim \sigma^f >0$ or $c(\kund^f, \lund^f) =1$. If not, 
recall that we have set $J_{\ddhat^f} = \emptyset$. In this case, if $m^f = 0$, $\lund^f = \One_{b^f}$ and $\kund^f = (k^f_1, \ldots , k^f_{a^f})$,
\e\label{eq:Gcal_pert_degen_l}
\left(\Gcal_{\ddhat}^{\rm pert} \right)_p^f \simeq \left(\Gcal_{\ddhat^f_1}^{\rm pert} \right)_{p^f_1 } \sqcup \cdots \sqcup \left(\Gcal_{\ddhat^f_{a^f}}^{\rm pert} \right)_{p^f_{a^f} },
\e
with $\ddhat^f_i = (\sigma^f, k^f_i, \One_{b^f})$ and $p^f_i \in J_{\ddhat^f_i}$. 
If $m^f = 0$, $\kund^f = \One_{a^f}$ and $\lund^f = (l^f_1, \ldots , l^f_{b^f})$,
\e\label{eq:Gcal_pert_degen_k}
\left(\Gcal_{\ddhat}^{\rm pert} \right)_p^f \simeq \left(\Gcal_{\ddhat^f_1}^{\rm pert} \right)_{p^f_1 } \sqcup \cdots \sqcup \left(\Gcal_{\ddhat^f_{b^f}}^{\rm pert} \right)_{p^f_{b^f} },
\e
with $\ddhat^f_j = (\sigma^f, \One_{a^f}, l^f_j )$ and $p^f_j \in J_{\ddhat^f_j}$.

\noindent\textbf{Spacing condition:} There exists $\Lambda_0 >1$ such that for any $\Lambda \geq \Lambda_0$ and any $\Lambda$-separated grafted biforest as above, $V$ commutes with the identifications (\ref{eq:Gcal_pert_0}), (\ref{eq:Gcal_pert_1}), (\ref{eq:Gcal_pert_degen_l}) and  (\ref{eq:Gcal_pert_degen_k}).

\begin{remark}One could have extended $\Gcal_{\dd} \to J_{\dd}$ to the partial compactification $\left( \overline{J}_{\dd} \right)_{\leq 1}$ constructed in Section~\ref{sec:higher_bimultipl}. The above condition would then ensure that $V$ extends over it. 
\end{remark}

\begin{remark}For any grafted biforest, one can always find a $\Lambda$-spacing level by taking $\abs{H}$ large enough. For such a level, the condition is vacuous. 
\end{remark}

The above conditions would be enough to guarantee that we construct an $\infty$-functor to $N_{dg}(\PreBialg)$. To ensure the extra conditions of $\fBialg$ are satisfied, we impose a few more conditions.

Equations (\ref{eq:V_kund_b}), (\ref{eq:V_a_lund}) are already guaranteed by the spacing condition. Let us turn to (\ref{eq:D_kund_i_lund}) and (\ref{eq:D_kund_lund_i} ).

Assume that $m=\dim \sigma = 0$, $\kund$ is such that $k_i=1$ for some $i$, and $\lund$ is almost vertical ($l_j=1$ or $2$). 
With $\widehat{\kund} = (k_1, \ldots , \widehat{k}_i, \ldots , k_a)$, one has:
\e
\Gcal_{\sigma}(0)^{\kund}_{\lund} \simeq \Gcal_{\sigma}(0)^{\widehat{\kund}}_{\lund}  \sqcup \left( J_{\sigma}(0)^{\kund}_{\lund} \times   \coprod_{j=1}^b \Gcal_{\sigma}(0)^{1}_{l_j} \right),
\e
which induces a map

\e\label{eq:Gcal_del_asc_vertic_tree}
\Gcal_{\sigma}(0)^{\kund}_{\lund} \to \Gcal_{\sigma}(0)^{\widehat{\kund}}_{\lund}  \sqcup   \coprod_{j=1}^b \Gcal_{\sigma}(0)^{1}_{l_j} ,
\e
where $\Gcal_{\sigma}(0)^{1}_{l_j}$ is either a line ($l_j=1$) or three half-lines ($l_j=2$).

Likewise, if $\kund$ is almost-vertical and $l_j=1$, one gets a similar map

\e\label{eq:Gcal_del_desc_vertic_tree}
\Gcal_{\sigma}(0)^{\kund}_{\lund} \to \Gcal_{\sigma}(0)^{\kund}_{\widehat{\lund}}  \sqcup   \coprod_{i=1}^a \Gcal_{\sigma}(0)^{k_i}_{1} .
\e

\noindent\textbf{Deletion condition:} $V$ commutes with the maps (\ref{eq:Gcal_del_asc_vertic_tree}) and (\ref{eq:Gcal_del_desc_vertic_tree}).

Finally, for the simplification relations (W) of 1-morphisms, assume $\dim \sigma = 1$, $\lund = \One_b$, then
\ea
J_{\sigma}(1)^{\kund}_{\lund} &\simeq J_{\sigma}(1)^{k_1}_{\lund} \times \cdots \times J_{\sigma}(1)^{k_a}_{\lund}\text{, and}\\
\Gcal_{\sigma}(1)^{\kund}_{\lund} &\simeq \coprod_{i=1}^{a} J_{\sigma}(1)^{k_1}_{\lund} \times \cdots \times \Gcal_{\sigma}(1)^{k_i}_{\lund} \times \cdots \times J_{\sigma}(1)^{k_a}_{\lund},
\ea
which gives a map
\e\label{eq:Gcal_W_lvertic}
\Gcal_{\sigma}(1)^{\kund}_{\lund} \to \coprod_{i=1}^{a}  \Gcal_{\sigma}(1)^{k_i}_{\lund} .
\e
Likewise, if $\kund= \One_a$, one has a similar map
\e\label{eq:Gcal_W_kvertic}
\Gcal_{\sigma}(1)^{\kund}_{\lund} \to \coprod_{j=1}^{b}  \Gcal_{\sigma}(1)^{\kund}_{l_j} .
\e

\noindent\textbf{Condition W:} $V$ commutes with  (\ref{eq:Gcal_W_lvertic}) and (\ref{eq:Gcal_W_kvertic}).

This ends the conditions defining $\Xfrak(S )$.

\section{Moduli spaces and construction of the functors}
\label{sec:Moduli_spaces_Lie}

\subsection{Moduli spaces}
\label{ssec:Moduli_spaces}

Let us now introduce the moduli space involved in Theorem~\ref{th:LieMon_to_fBialg}. Let $S\in N(\LieMon')_n$ be an $n$-simplex as in the previous section, and $V\in \Xfrak(S)$.
\begin{defi}\label{def:mod_space}
Let $\ddhat = (\sigma , \kund, \lund)$, with $\sigma = [\sigma_0, \ldots , \sigma_m]\subset [n]$; and a pair of families of critical points
\ea
x = \left\lbrace x_{ij}\right\rbrace &\in  \hspace{.5em} in (\ddhat) := \left( \left( \Crit (f_{\sigma_m}) \right)^{b}\right)^{\abs{\kund}} , \\
y = \left\lbrace y_{ij}\right\rbrace &\in out (\ddhat) := \left( \left( \Crit (f_{\sigma_0}) \right)^{\abs{\lund}}\right)^{a} .
\ea

Let $\Mcal_{\ddhat}(x,y;V)$ be the moduli space of pairs $(p,\gamma)$, where $p\in J_{\ddhat}$, and 
\e
\gamma \colon \left( \Gcal_{\ddhat} \right)_p \to G_0 \sqcup \cdots \sqcup G_n
\e
such that:
\begin{itemize}
\item $\gamma$ maps the $i$-th level to $G_{\sigma_i}$:
\e
\gamma_i \colon \left( \Gcal_{\ddhat;i} \right)_p \to G_{\sigma_i} .
\e
\item On each interval, $\gamma$ is a flowline for $V$, i.e. $ \frac{\d \gamma}{\d t} = V(\gamma)$, with $t=-h$.
\item At pairs of points $(t_i, t_{i+1})\in \left( \Gcal_{\ddhat;i} \right)_p  \times \left( \Gcal_{\ddhat;i+1} \right)_p $ coming from the same point of the grafting surface $\Hcal_{\dd}$, $\gamma$ satisfies the grafting condition
\e
\phi_{\sigma_i \sigma_{i+1}} \left( \gamma(t_{i+1}) \right) = \gamma(t_i).
\e
\item $\gamma$ satisfies a multiplicative condition at vertices of the ascending forest. That is, if $t_1, \ldots, t_d$ correspond to endpoints of the ordered incoming edges $e_1, \ldots, e_d$ of a given ascending vertex (and are on the same branch of the descending forest), and $t'$ the endpoint for the outgoing edge, then
\e
\gamma( t_1) \times  \cdots \times  \gamma(t_d ) = \gamma (t').
\e
\item $\gamma$ coincides at endpoints corresponding to a given vertex of the descending forest (and on the same branch of the ascending forest). That is, for such endpoints $t_1, \ldots, t_d, t'$:
\e
\gamma( t_1) =  \cdots =  \gamma(t_d ) = \gamma (t').
\e

\item One has the limiting conditions at $t\to \pm \infty$:
\ea
\lim_{t\to -\infty} \gamma &= x, \\
\lim_{t\to +\infty} \gamma &= y. 
\ea
Here, for $\abs{t}$ large enough, by folding all the intervals, we view $\gamma$ as a single map of the form
\ea
\gamma &\colon (-\infty, K] \to  \left( \left( G_{\sigma_m} \right)^{b}\right)^{\abs{\kund}}\text{, or} \\
\gamma &\colon [K, +\infty) \to  \left( \left( G_{\sigma_0} \right)^{\abs{\lund}}\right)^{a} .
\ea
\end{itemize}
\end{defi}

To study $\Mcal_{\ddhat}(x,y)$, one can view it as the zero set of a Fredholm section of a Banach bundle. One can also view it as an intersection of piecewise-smooth submanifolds, as a particular case of a pushforward moduli space; as for the grafted lines moduli spaces in Section~\ref{sec:informal_outline}, and as we explain below.

Given a grafted biforest $p=(\Lund, B) \in J(n)^{\kund}_{\lund}$, let 
\ea
X &= \left(( G_n)^b\right)^{\abs{\kund}}\text{, and} \\
Y &= \left(( G_0)^{\abs{\lund}}\right)^{a}.
\ea
The associated grafted graph $\Gcal_p$ provides a map 
\e
\Phi_p \colon X \to Y,
\e 
analogous to (\ref{eq:Phi_Lund_grafted_line}), i.e. reading the graph from height $h_{\max}+1$ to $h_{\min}-1$, with $h_{\max}$ and $h_{\min}$ standing respectively for the maximum and minimum of $H(\Lund) \cup h(\Vert (B) )$, and applying the following rules:
\begin{itemize}
\item when one encounters an ascending vertex, one applies the multiplication;
\item at a descending vertex, one applies the diagonal map;
\item at a grafting level, one applies the maps $\phi_{ij}$;
\item and on finite intervals of lenth $L$, one applies the time $L$ flow of $V$.
\end{itemize}  
Then, as a single map, we get as in (\ref{eq:Phi_grafted_line}),
\e\label{eq:Phi_grafted_biforest}
\Phi\colon J(n)^{\kund}_{\lund} \times \left( \left(G_n\right)^b \right)^{\abs{\kund}} \to \left( (G_0)^{\abs{\lund}}  \right)^a.
\e

\begin{lemma}
Equiping:
\begin{itemize}
\item $J_{\dd}$ with a Morse function $f_{J_{\dd}}$ with a single critical point $o$ on its interior, which is furthermore a local maximum; and a pseudo-gradient $V_{J_{\dd}}$ for it,
\item $X$ and $Y$ with the Morse functions respectively induced by $f_n$ and $f_0$; and their associated pseudo-gradients
\ea
f_{X}( \lbrace x_{ij} \rbrace ) &= \sum_{i,j} f_n (x_{ij}), \ \  V_{X}( \lbrace x_{ij} \rbrace ) = \sum_{i,j} V_n (x_{ij}); \\
f_{Y}( \lbrace y_{ij} \rbrace ) &= \sum_{i,j} f_0 (y_{ij}), \ \ V_{Y}( \lbrace y_{ij} \rbrace ) = \sum_{i,j} V_0 (y_{ij}).
\ea

\item $\widehat{X} = J_{\dd} \times X$ with $f_{\widehat{X}} = f_{J_{\dd}} + f_{X} $ and $V_{\widehat{X}} = V_{J_{\dd}} + V_{X} $
\end{itemize}
Then, with $\widehat{x} = (o,x)$, and  
$\tund = h_{\dd}^{-1} (h_{\max}+1) \subset \Gcal_p $ the level set (consisting in $b \cdot \abs{\kund}$ points), the map $ (p, \gamma) \mapsto (p, \gamma   (\tund)) $ gives a bijection from $\Mcal_{\dd}(x,y;V)$ to the pushforward moduli space of $\Phi$, $U_{\widehat{x}} \cap \Phi^{-1}(S_y) \simeq \Gamma(\Phi) \cap (U_{\widehat{x}}\times S_y )$.
\end{lemma}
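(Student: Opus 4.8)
The plan is to reduce the lemma to an unwinding of the definitions: I will exhibit an explicit inverse to $(p,\gamma)\mapsto (p,\gamma(\tund))$ and check that each defining clause of $\Mcal_{\dd}(x,y;V)$ matches one of the conditions cutting out the pushforward moduli space. First I record the shape of the target. Since $f_{J_{\dd}}$ has a single critical point $o$, an interior local maximum, and $V_{J_{\dd}}$ is chosen accordingly, $U_o=J_{\dd}$; and because $V_{\widehat{X}}=V_{J_{\dd}}+V_X$ the flow on $\widehat{X}=J_{\dd}\times X$ is the product flow, so $U_{\widehat{x}}=J_{\dd}\times U_x$. Hence a point of $U_{\widehat{x}}\cap\Phi^{-1}(S_y)$ is simply a pair $(p,\xi)$ with $p\in J_{\dd}$, $\xi\in U_x\subset X$ and $\Phi_p(\xi)\in S_y$, the identification with $\Gamma(\Phi)\cap(U_{\widehat{x}}\times S_y)$ being $(p,\xi)\mapsto((p,\xi),\Phi_p(\xi))$.

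Given such a $(p,\xi)$, I reconstruct $\gamma$ on $\Gcal_p$ as follows. The top level set $\tund=h_{\dd}^{-1}(h_{\max}+1)$ of $\Gcal_p$ has exactly $b\cdot\abs{\kund}$ points, which I match with the coordinates of $X=((G_n)^b)^{\abs{\kund}}$, and I set $\gamma$ equal to $\xi$ there. On the $b\abs{\kund}$ infinite rays above $\tund$ — which lie in $\Gcal^{\rm br}$, where by the breaking condition $V$ equals the given pseudo-gradients and so restricts, after folding, to $V_X$ — I let $\gamma$ be the flowlines issuing from the components of $\xi$; since $\xi\in U_x$ the folded map $(-\infty,K]\to X$ converges to $x$. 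Propagating $\xi$ downward through $\Gcal_p$ — group multiplication at ascending vertices, diagonal at descending vertices, $\phi_{i(i+1)}$ at grafting levels, and the domain-dependent flow of $V$ on each finite interval — determines $\gamma$ on all the remaining intervals, and by definition this is precisely the computation of $\Phi_p$; so at the bottom level set $h_{\dd}^{-1}(h_{\min}-1)$, which has $a\cdot\abs{\lund}$ points matching $Y=((G_0)^{\abs{\lund}})^a$, one reads off $\Phi_p(\xi)$, and on the $a\abs{\lund}$ rays below I take the flowlines issuing from its components, whose folded limit is $y$ precisely because $\Phi_p(\xi)\in S_y$. Global existence of every flow is free since the $G_i$ are compact. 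One then checks that this $\gamma$ satisfies every clause of Definition~\ref{def:mod_space}: the flow equation, the grafting condition, the multiplicative condition at ascending vertices and the coincidence condition at descending vertices hold by construction, while the two limit conditions are, by construction, exactly $\xi\in U_x$ and $\Phi_p(\xi)\in S_y$.

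For the converse, given $(p,\gamma)\in\Mcal_{\dd}(x,y;V)$, the folded restriction of $\gamma$ to the part above $\tund$ is a $V_X$-flowline (again using the breaking condition to know $V$ is unperturbed there) converging to $x$, so $\gamma(\tund)\in U_x$; and reading $\gamma$ from $\tund$ downward computes $\Phi_p(\gamma(\tund))=\gamma(h_{\dd}^{-1}(h_{\min}-1))\in S_y$. Thus $(p,\gamma(\tund))\in U_{\widehat{x}}\cap\Phi^{-1}(S_y)$. Since $\gamma$ is uniquely determined by $p$ together with its restriction to $\tund$ (everything downstream is forced by the ODE and the vertex and grafting conditions), this assignment is inverse to the reconstruction above, so the map is a bijection.

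The routine but lengthy part I would not spell out is the combinatorial bookkeeping identifying the rectangular grids of $X$ and $Y$ with the top and bottom level sets of $\Gcal_p$ and verifying that graph-reading coincides with $\Phi$ of (\ref{eq:Phi_grafted_biforest}). The step I expect to be the main obstacle is forcing the limit conditions at $t\to\pm\infty$ to translate into membership in $U_x$ and $S_y$: this is exactly where the breaking condition enters, ensuring that $V$ agrees with the direct-sum pseudo-gradient on $\Gcal^{\rm br}$, so that the infinite ends of $\gamma$ are honest $V_X$- and $V_Y$-flowlines rather than flowlines for a domain-dependent perturbation. A secondary subtlety is that when $p$ lies on a boundary stratum of $J_{\dd}$ several vertex or grafting heights can coincide; but graph-reading is unambiguous on the open dense stratum and extends continuously because the group operations are associative and operations on distinct branches commute, so $\Phi_p$ — and hence the reconstruction of $\gamma$ — is well defined on all of $J_{\dd}$.
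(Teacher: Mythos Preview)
Your proof is correct and follows essentially the same approach as the paper: construct the inverse by flowing up from $\tund$ to recover the top part of $\gamma$, and flowing down while applying the vertex and grafting operations to recover the rest, observing that this downward reading is precisely the definition of $\Phi_p$. The paper's own proof is a three-line sketch of exactly this idea; your version spells out the identification $U_{\widehat{x}}=J_{\dd}\times U_x$, the role of the breaking condition in matching the limit conditions with $U_x$ and $S_y$, and the continuity across transition walls, all of which are implicit in the paper.
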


\begin{proof}
 From $ \gamma   (\tund)$ one can reconstruct the upper part $\gamma_{|[h_{\max}+1, +\infty)}$ by flowing up, and $\gamma_{|(-\infty , h_{\max}+1]}$ by flowing down, and applying the various operations at vertices and grafting levels. This shows that the map in the statement has an inverse map.
\end{proof}

Observe that $J_{\dd}$ contains a collection of ``transition hyperplanes'' corresponding to where two vertices of grafting levels have the same heights. When one crosses these walls, the topological type of the corresponding grafted graph $\Gcal_p$ changes. The map $\Phi$ is smooth on the complement of these walls, but smoothness might fail at these walls. Nevertheless, the map $\Phi$ is continuous, since:
\begin{itemize}
\item the multiplication $m\colon G\times G \to G$ is associative: this ensures continuity when two vertices of the ascending forest collide.
\item the diagonal $\Delta \colon  G \to G\times G$ is coassociative: this ensures continuity when two vertices of the descending forest collide.
\item the Hopf relation holds at the monoid level, i.e. 
$\Delta \circ m = (m\times m) \circ \tau_{23} \circ (\Delta \times \Delta)$, 
where $\tau_{23} \colon G^4 \to G^4$ exchanges the second and third factors. This ensures continuity when an ascending vertex and a descending one exchange their positions (i.e. a ``Hopf pattern'' appears in the graph).
\item the maps $\phi_{ij}$ are monoid homomorphisms, this ensures continuity when a grafting level goes over an ascending vertex.
\item the maps $\phi_{ij}$ commute with $\Delta$, i.e. $\Delta \circ \phi_{ij} = (\phi_{ij} \times \phi_{ij}) \circ \Delta$. This ensures continuity when a grafting level goes over a descending vertex.
\end{itemize}
Therefore, $\Phi$ is piecewise-smooth. Moreover, $\Phi$ is smooth on each stratum, by smoothness of flows and the various maps appearing. 
It follows that $\Mcal_{\dd}(x,y;V)$ has virtual dimension
\ea
{\rm vdim} \Mcal_{\dd}(x,y;V) &= \ind(\widehat{x}) - \ind(y)
 = \ind(x) - \ind(y) + \dim J_{\dd} \\
&= \ind(x) - \ind(y) + v(\kund) + v(\lund) +n-1 .\nonumber
\ea

\begin{defi}\label{def:V_regular}
We say that $V\in \Xfrak(S)$ is \emph{regular} if for all $\ddhat, x,y$ such that ${\rm vdim}\Mcal_{\ddhat}(x,y)\leq 1$, each stratum of $\Gamma(\Phi)$ intersects $U_{\widehat{x}}\times S_y$  transversely. 
We denote $\Xfrak^{\rm reg}(S) \subset \Xfrak(S)$ the set of regular elements.
\end{defi}

Let us denote 
\e
\Gcal(\partial [n]) = \coprod_{\sigma \varsubsetneq [n]} \Gcal_{\sigma}(\dim \sigma),
\e
so that $\Gcal( [n]) =\Gcal(\partial [n]) \sqcup \Gcal_{[n]}(n )$. Let then 
\e
\Xfrak(\partial S) \subset  C^{\infty} ( \Gcal(\partial [n]) , \Xfrak (G_0) \sqcup \cdots \sqcup  \Xfrak (G_n) )
\e
be the subset satisfying all conditions of $\Xfrak( S)$. In other words, $\Xfrak(\partial S) \subset \prod_{i=0}^n \Xfrak(\partial_i S)$ is the subset such that all common boundaries agree. Let also 
\e
\Xfrak^{\rm reg}(\partial S) = \Xfrak(\partial S) \cap \left( \prod_{i=0}^n \Xfrak^{\rm reg}(\partial_i S) \right).
\e
The inclusion $\Gcal(\partial [n]) \subset \Gcal( [n])$ gives a restriction to the boundary map 
\e
R_{\partial} \colon  \Xfrak( S)\to \Xfrak (\partial S).
\e

If $W\in \Xfrak (\partial S)$, $R_{\partial}^{-1}(W) $ corresponds to maps $V\colon \Gcal_{[n]}(n) \to \coprod_i \Xfrak(G_i)$ with behaviour over boundaries and ends of $J_{[n]}(n)$ (i.e. boundaries of $\left(\overline{J}_{[n]}(n) \right) _{\leq 1}$) prescribed by $W$.

\begin{lemma}\label{lem:reg_extention}
Assume that $W\in \Xfrak^{\rm reg} (\partial S)$. Then $\Xfrak^{\rm reg} ( S) \cap R_{\partial}^{-1}(W) $ is a comeagre subset of $R_{\partial}^{-1}(W) $.
\end{lemma}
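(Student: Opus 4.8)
The plan is to apply a standard parametrized-transversality (Sard--Smale) argument to the moduli spaces $\Mcal_{\ddhat}(x,y;V)$, relative to the prescribed boundary behaviour $W$. First I would set up the universal moduli space. Over the affine space $R_{\partial}^{-1}(W)$ of admissible domain-dependent perturbations extending $W$, and for each fixed $\ddhat=(\sigma,\kund,\lund)$ and each pair $x,y$ with ${\rm vdim}\,\Mcal_{\ddhat}(x,y)\leq 1$, form the universal moduli space $\Mcal_{\ddhat}^{\rm univ}(x,y) = \{(V,p,\gamma) : V\in R_{\partial}^{-1}(W),\ (p,\gamma)\in\Mcal_{\ddhat}(x,y;V)\}$, realized as the zero set of a Fredholm section of a Banach bundle over (a Banach manifold chart of) $R_{\partial}^{-1}(W)\times J_{\ddhat}\times(\text{path space})$ --- equivalently, via the lemma identifying $\Mcal_{\ddhat}(x,y;V)$ with $\Gamma(\Phi)\cap(U_{\widehat x}\times S_y)$, as a fibered intersection problem whose source map depends on $V$. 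Working stratum by stratum of the piecewise-smooth map $\Phi$ (the strata indexed by the topological type of the grafted graph, i.e. by the chamber of $J_{\ddhat}$), one checks that the universal section is transverse to zero: this is where the freedom to perturb $V$ on $\Gcal^{\rm pert}$ is used. The key point is that at any solution $(V,p,\gamma)$, the flowline $\gamma$ passes through some point of the perturbation region $\Gcal^{\rm pert}_{\ddhat}$ at which one may deform $V$ in an arbitrary direction of $\mathfrak{X}(G_{\sigma_i})$; infinitesimally this surjects onto the cokernel of the linearized intersection operator, exactly as in the standard Morse-theoretic argument (cf. \cite{AudinDamian}, \cite[Sec.~2.8]{KMbook}) and its Floer-field-theoretic refinements (\cite{abouzaid2011topological,Mazuir1,seidel2008fukaya}). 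One must be slightly careful that the Level, Breaking, Shrinking, Spacing, Deletion and $W$ conditions cut out a closed affine subspace of the space of all domain-dependent vector fields, so that the allowed perturbations still form a Banach manifold on which the implicit function theorem applies; and that each of these linear constraints still leaves enough perturbation freedom near an interior point of $\Gcal^{\rm pert}$ not lying on any breaking region or spacing wall --- which is exactly why $\Gcal^{\rm pert}$ and $\Gcal^{\rm br}$ were separated by a buffer and why every grafted biforest admits a vacuous $\Lambda$-spacing level (the two remarks following the Spacing condition).

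Once transversality of the universal section is established, $\Mcal_{\ddhat}^{\rm univ}(x,y)$ is a (second-countable, paracompact) Banach manifold, and the projection $\pi\colon \Mcal_{\ddhat}^{\rm univ}(x,y)\to R_{\partial}^{-1}(W)$ is Fredholm of index $\leq 1$ (the index being ${\rm vdim}\,\Mcal_{\ddhat}(x,y) = \ind(x)-\ind(y)+\dim J_{\ddhat}$, by the dimension formula proved above). Applying the Sard--Smale theorem, the set of regular values of $\pi$ is comeagre in $R_{\partial}^{-1}(W)$; and $V$ is a regular value precisely when every stratum of $\Gamma(\Phi)$ meets $U_{\widehat x}\times S_y$ transversely, i.e. when $V$ satisfies Definition~\ref{def:V_regular} for that particular $(\ddhat,x,y)$. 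Finally I would take the countable intersection: there are only countably many triples $(\ddhat,x,y)$ (finitely many $\ddhat$ of each relevant type, and $\Crit(f_i)$ finite), and since all perturbations considered already lie over $W$ which is itself regular on $\partial S$, the comeagre conditions are compatible. A countable intersection of comeagre sets in the Baire space $R_{\partial}^{-1}(W)$ is comeagre, and this intersection is by definition $\Xfrak^{\rm reg}(S)\cap R_{\partial}^{-1}(W)$.

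The main obstacle is the transversality-of-the-universal-section step in the presence of all the imposed constraints on $\Xfrak(S)$, for two reasons. First, one must verify that perturbations satisfying the Level/Breaking/Shrinking/Spacing/Deletion/W conditions still suffice to achieve transversality for \emph{interior} configurations: the argument requires exhibiting, for each solution and each component of the cokernel, an admissible variation of $V$ supported away from all the loci where $V$ is pinned down (the $\Gcal^{\rm br}$ region, the shrinking inclusions, the spacing/deletion/W identification loci). This is plausible because a generic flowline spends positive time in an unconstrained part of $\Gcal^{\rm pert}$, but it needs a careful local analysis, and one should treat the low-dimensional strata (where a grafted graph degenerates, e.g. a vertical tree collapses or $c(\kund^f,\lund^f)\neq 1$) separately, invoking the compatibility identifications \eqref{eq:Gcal_pert_0}--\eqref{eq:Gcal_pert_degen_k} so that transversality on a degenerate stratum reduces to transversality on lower pieces already handled inductively. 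Second, because $\Phi$ is only piecewise smooth, one does not get a single Fredholm problem but a finite stratified family, and one must argue that achieving transversality on each closed stratum (and their intersections, the walls) simultaneously is possible --- again a countable/finite intersection of comeagre conditions, but one that must be organized by induction on the dimension of the stratum so that the boundary/wall behaviour is controlled before the top stratum is perturbed.
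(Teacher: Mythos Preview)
Your approach matches the paper's: form the universal moduli space over $R_{\partial}^{-1}(W)$, establish surjectivity of the linearized section via the freedom to vary $V$ on $\Gcal^{\rm pert}$, apply Sard--Smale, and take a countable intersection over $(\ddhat,x,y)$. The paper organizes the surjectivity step a little differently from your obstacle list: rather than seeking admissible variations near the boundary and ends of $J_{\ddhat}$, it observes that regularity of $W$ already cuts out $\partial\overline{\Mcal}_{\ddhat}(x,y;V)$ transversely, and since this is a stronger (parametrized) transversality condition and is open, it propagates to a neighbourhood; and on the components where the Deletion/W constraints determine $V$ completely, the moduli spaces decompose as products of simpler ones (forward-referencing Propositions~\ref{prop:descr_bdary_mod_spaces}, \ref{prop:mod_space_deletion}, \ref{prop:mod_space_W}) and inherit transversality from their factors.
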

\begin{proof}

The standard transversality argument based on Sard-Smale's theorem applies in our setting. Let us briefly sketch it, we refer for example to \cite{FloerHoferSalamon,AudinDamian} for more details. One first forms a universal moduli space
\e
F^{-1}(0) = \bigcup_{V\in R_{\partial}^{-1}(W)} \Mcal_{\ddhat}(x,y;V) \times \lbrace V \rbrace \subset \Bcal_{\ddhat}(x,y) \times R_{\partial}^{-1}(W),
\e
where $\Bcal_{\ddhat}(x,y)$ stands for graphs as in Definition~\ref{def:mod_space} but without the flow equation, and $F(\gamma,V)= \frac{\d \gamma}{\d t} - V$ is the operator bringing back the flow equation.

If one shows that $DF_{|F^{-1}(0)}$ is surjective, then $F^{-1}(0)$ is smooth, and the result follows from Sard-Smale's theorem applied to the projection $F^{-1}(0) \to R_{\partial}^{-1}(W)$.

Surjectivity of $DF_{|F^{-1}(0)}$ involves a duality argument: an element in the cokernel on the one hand satisfies a unique continuation principle, and on the other hand must be identically zero on open sets where perturbations of $V$ are unrestricted. The next three observations permit to conclude the proof.

First, \emph{in most components} of $\Gcal_{[n]}(n)$, over the interior of $\left(\overline{J}_{[n]}(n) \right) _{\leq 1}$, the unrestricted perturbation region is $\Gcal^{\rm pert}$, and intersects every connected components of fibers.

Second, over the boundary of $\left(\overline{J}_{[n]}(n) \right) _{\leq 1}$, i.e. the boundary and the ends of ${J}_{[n]}(n)$, since $W$ is regular, the boundaries and ends of the moduli spaces $\partial \overline{\Mcal}_{\ddhat}(x,y;V)$ are transversally cut out. Transversality for these is a stronger condition than transversality as elements of $\Mcal_{\ddhat}(x,y;V)$, which can be seen as a parametrized version of transversality, and is an open condition. It follows that $\Mcal_{\ddhat}(x,y;V)$ is automatically transverse in a neighborhood of those points.

Third, on some components of $\Gcal_{[n]}(n)$, perturbations are completely restricted: they are determined by $V$ on other components. In those cases, the corresponding moduli spaces can be expressed as products of moduli spaces corresponding to other components, as we shall see in Propositions~\ref{prop:descr_bdary_mod_spaces},~
\ref{prop:mod_space_deletion} and 
\ref{prop:mod_space_W}. Therefore they are transversely cut out as soon as the other component ones are.
\end{proof}

\begin{defi}\label{def:Liepert}
Let $\LieMonpert$ be the simplicial set whose $n$-simplices consist in $\widehat{S} = (S, V)$, where $S$ is an $n$-simplex of $N(\LieMon ')$, and $V\in \Xreg(S)$.

A \emph{coherent choice of perturbations} is a simplicial section $\mathbb{V} \colon \LieMon \to \LieMonpert$. Concretely, it is a choice for any $S$ of $V\in \Xreg (S)$, consistent with restrictions to boundaries of $S$.
\end{defi}

\begin{lemma}\label{lem:exist_coh_choices_pert}
Coherent choice of perturbations exist.
\end{lemma}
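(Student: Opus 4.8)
The plan is to construct $\mathbb{V}$ by induction on the skeleta of $\Lie$, i.e. on the dimension of simplices, using at each stage that the relevant space of extensions is nonempty because it is a comeagre subset of a nonempty affine (in particular, contractible) space. Concretely, I would first observe that for each $n$-simplex $S\in N(\Lie')_n$, the space $\Xfrak(S)$ is nonempty: it is cut out of the space of domain-dependent vector fields by a list of \emph{closed linear} conditions (the level, breaking, shrinking, spacing, deletion, and W conditions are all of the form ``$V$ agrees with a prescribed $V'$ on some subset'' or ``$V$ commutes with certain gluing/projection identifications''), so $\Xfrak(S)$ is an affine subspace of $C^{\infty}(\Gcal([n]),\coprod_i \Xfrak(G_i))$, hence convex and contractible, and it is nonempty because one can always take $V$ to be the unperturbed pseudo-gradient $V_{\sigma_i}$ on each level $\Gcal_\sigma(m;i)$ — this visibly satisfies all the conditions (the identifications in the spacing and deletion conditions are built precisely so that the split-up unperturbed vector fields glue to the unperturbed one).

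Next I would set up the induction. Suppose $\mathbb{V}$ has been defined compatibly on all simplices of $\Lie$ of dimension $<n$, satisfying $V\in\Xreg(S)$ for each such $S$ and agreeing under all face maps; I want to extend it over the $n$-simplices. Fix a nondegenerate $n$-simplex $S$ (degenerate simplices are forced by the simplicial identities, and one checks the degeneracy of a regular perturbation is regular — or more simply one only needs to specify $\mathbb{V}$ on nondegenerate simplices since $N(\Lie')$ is the nerve of a category and a section is determined by its values there, compatibly). By the inductive hypothesis, the faces $\partial_i S$ carry perturbations $V_i\in\Xreg(\partial_i S)$ that agree on overlaps, so they assemble to an element $W\in\Xfrak^{\rm reg}(\partial S)\subset\prod_i\Xfrak^{\rm reg}(\partial_i S)$. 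Now apply Lemma~\ref{lem:reg_extention}: $\Xfrak^{\rm reg}(S)\cap R_{\partial}^{-1}(W)$ is a comeagre subset of $R_{\partial}^{-1}(W)$, and $R_{\partial}^{-1}(W)$ is nonempty (extend $W$ by the unperturbed vector field in the interior of $\left(\overline{J}_{[n]}(n)\right)_{\leq 1}$, which is consistent with $W$ on the boundary since $W$ already satisfies the breaking condition there) and, being an affine subspace of a Fréchet/Banach-of-class-$C^\ell$ space (a $C^\ell$ Baire space after the usual Taubes trick), comeagre subsets are nonempty. Pick any $V$ in this comeagre set and set $\mathbb{V}(S)=(S,V)$; by construction $R_\partial(V)=W$, so the new choice is compatible with the faces.

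Finally I would note the one technical point that needs care: to run the Baire-category/Sard--Smale machinery one works with the $C^\ell$ (or weighted $C^\varepsilon$) completions of the perturbation spaces, proves comeagreness there for each $\ell$, and then recovers a smooth regular perturbation by the standard argument (the set of $C^\infty$ perturbations that are $C^\ell$-regular for all relevant moduli problems is still comeagre, or one uses that the regular set is open and dense in each finite regularity and intersects over an exhausting sequence). Since the excerpt's Lemma~\ref{lem:reg_extention} already packages the transversality statement, the only work left here is the bookkeeping of the induction and the nonemptiness of $\Xfrak(S)$ and of $R_\partial^{-1}(W)$. The main obstacle — and the thing to be careful about — is checking that the unperturbed vector field really does lie in $\Xfrak(\partial S)$ and in $R_\partial^{-1}(W)$ simultaneously, i.e. that all the structural identifications (\ref{eq:Gcal_pert_0})--(\ref{eq:Gcal_pert_degen_k}) and the deletion/W maps are \emph{compatible} in the sense that a single global unperturbed choice satisfies all of them at once; this is where one uses that these identifications are all induced by the same gluing maps $g_{\dd^0,\dd^1}$ of Section~\ref{sec:higher_bimultipl}, so consistency is automatic. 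Everything else is a routine induction over skeleta.
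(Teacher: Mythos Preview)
Your proposal is correct and follows essentially the same approach as the paper: induct on the dimension of simplices, assemble the previously chosen perturbations into $W\in\Xfrak^{\rm reg}(\partial S)$, and invoke Lemma~\ref{lem:reg_extention} to extend regularly over $\Gcal_{[n]}(n)$. The paper's proof is very terse (three sentences), and your additional discussion of nonemptiness of $\Xfrak(S)$ and $R_\partial^{-1}(W)$, the Baire/Sard--Smale bookkeeping, and degenerate simplices fills in details the paper leaves implicit; one minor imprecision is that ``extend $W$ by the unperturbed vector field in the interior'' does not quite work as stated (the shrinking and spacing conditions force the extension near $\partial(\overline{J}_{[n]}(n))_{\leq 1}$ to match $W$, not $V_{\sigma_i}$), but the affine structure you identify makes the correct extension argument routine.
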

\begin{proof}
By induction one can construct $\mathbb{V}_n \colon (\LieMon)_n \to (\LieMonpert)_n$.

Assume $\mathbb{V}_{n-1} $ is constructed, we want to find

\e
V \colon \Gcal([n]) = \Gcal(\partial [n]) \sqcup \Gcal_{[n]}(n) \to \coprod_i \Xfrak(G_i)
\e
Since $\mathbb{V}_n$ is simplicial, its restriction to $ \Gcal(\partial [n])$ is determined by  $\mathbb{V}_{n-1} $. From Lemma~\ref{lem:reg_extention}, it extends to a regular $V$ on $\Gcal_{[n]}(n)$.

\end{proof}

\begin{prop}\label{prop:Liepert_wKan}
$\LieMonpert$ is a weak Kan complex.
\end{prop}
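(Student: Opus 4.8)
The plan is to peel the two layers of an $n$-simplex of $\Liepert$ apart and handle them in turn: first fill the underlying horn in $N(\Lie')$, then build a regular perturbation on top of it by invoking Lemma~\ref{lem:reg_extention} twice. So fix $n\ge 2$ and $1\le i\le n-1$, and suppose given an inner horn $\Lambda^n_i\to\Liepert$, i.e.\ $(n-1)$-simplices $(S^{(j)},V^{(j)})$ for $j\ne i$ with matching faces. The $S^{(j)}$ form an inner horn $\Lambda^n_i\to N(\Lie')$, and since the simplicial nerve of an ordinary category is a weak Kan complex with \emph{unique} inner fillers, there is a unique $S\in N(\Lie')_n$ with $\partial_j S=S^{(j)}$ for $j\ne i$; concretely all objects $(G_k,f_k,V_k)$ and all edges $\phi_{k(k+1)}$ already appear in the horn (using $n\ge 2$ and $1\le i\le n-1$), and the rest of the data is forced by the composition relations $\phi_{kl}=\phi_{k(k+1)}\circ\cdots\circ\phi_{(l-1)l}$. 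It then remains to produce $V\in\Xreg(S)$ such that $\partial_j(S,V)=(S^{(j)},V^{(j)})$ for every $j\ne i$.

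The key combinatorial observation is which faces of $[n]$ are \emph{not} seen by the horn. A proper face $\sigma\subsetneq[n]$ is contained in $\partial_j[n]$ for some $j\ne i$ unless $\sigma=\partial_i[n]$; hence the horn already prescribes $V$ on $\Gcal_\sigma(\dim\sigma)$ for all $\sigma$ except $\sigma=\partial_i[n]$ and $\sigma=[n]$, these prescriptions are mutually compatible by the horn condition, and each is regular since $V^{(j)}\in\Xreg(\partial_j S)$. I would fill the missing $i$-th face first. Every proper face of $\partial_i[n]$ omits a vertex other than $i$, hence is prescribed, so the horn data assembles to a regular boundary datum $W_i$ for the $(n-1)$-simplex $\partial_i S$, in the sense required by Lemma~\ref{lem:reg_extention}. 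Applying that lemma (with $R_\partial$ taken for $\partial_i S$) gives a comeagre, hence nonempty, set $\Xreg(\partial_i S)\cap R_\partial^{-1}(W_i)$; pick $V^{(i)}$ in it. By construction $V^{(i)}$ agrees with all the $V^{(j)}$, $j\ne i$, on common lower faces, so the whole collection $\{V^{(j)}\}_{0\le j\le n}$ defines an element $W\in\Xfrak^{\rm reg}(\partial S)$. A second application of Lemma~\ref{lem:reg_extention}, now to the $n$-simplex $S$ with boundary datum $W$, produces $V\in\Xreg(S)\cap R_\partial^{-1}(W)$, and $(S,V)$ is the required filler of the horn.

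The substance of the argument is entirely in Lemma~\ref{lem:reg_extention} (the Sard--Smale transversality extension) plus the elementary fact that nerves of categories are inner-fillable; the only things that need care here are the bookkeeping and the order of operations. Specifically: checking that an inner horn misses exactly the faces $\partial_i[n]$ and $[n]$; checking that the data over every other face genuinely constitutes a \emph{consistent, regular} boundary datum of the precise shape demanded as hypothesis of Lemma~\ref{lem:reg_extention} (this is where one must first fill $\partial_i S$ before one can even phrase the extension problem for $S$); and checking that the two-stage extension ``fill $\partial_i S$, then fill the interior of $S$'' yields face-compatibility on the nose rather than merely up to homotopy. The base case $n=2$, $i=1$ is a degenerate instance of the same scheme: the horn is a pair of composable group morphisms, $\partial_1[2]=\{0,2\}$ is the unique missing proper face, and $\Gcal_{\{0,2\}}(1)$ carries the only perturbation that must be freshly chosen before the interior of $\Gcal_{[2]}(2)$. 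I do not expect any genuine obstacle beyond this organization.
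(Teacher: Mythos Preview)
Your proof is correct and follows essentially the same approach as the paper: fill the underlying horn in $N(\Lie')$ (which the paper leaves implicit), then apply Lemma~\ref{lem:reg_extention} twice, first to produce the missing face $\partial_i[n]$ and then to fill the interior $[n]$. Your write-up is in fact more careful than the paper's---you spell out why every proper face of $\partial_i[n]$ already lies in the horn, why the assembled boundary data are regular, and why the two-stage extension yields strict face compatibility---whereas the paper compresses all this into a few lines.
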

\begin{proof} This follows from Lemma~\ref{lem:reg_extention}, applied twice. Denote

\e
\Gcal(\partial [n]) = \coprod_{\sigma \varsubsetneq [n]} \Gcal_{\sigma}(\dim \sigma), \ \ 
\Gcal(\Lambda^n_i) = \coprod_{\begin{subarray}{c} \sigma \varsubsetneq [n] \\  \sigma \neq \partial_i [n] \end{subarray}} \Gcal_{\sigma}(\dim \sigma) .
\e
Consider an inner horn of $\LieMonpert$, i.e. an $n$-simplex $S$ of $N(\LieMon)$, Morse functions and pseudo-gradients $\left\lbrace f_i, V_i \right\rbrace_i $, and a regular 
\e
V\colon \Gcal (\Lambda^n_i) \to \coprod_i \Xfrak (G_i).
\e
The lemma applied to $V$ on $\Gcal (\partial \Lambda^n_i) =\Gcal (\partial (\partial_i [n])) $ permits to extend $V$ to 
\e
\Gcal (\partial [n]) =  \Gcal(\Lambda^n_i) \sqcup  \Gcal_{\partial_i [n]} (n-1).
\e
Applying the lemma a second time permits to extend from $\partial [n]$ to $ [n]$.

\end{proof}

\begin{prop}\label{prop:descr_bdary_mod_spaces}
Fix an $n$-simplex $S \in N(\LieMon')_n$, and $V\in \Xreg (S)$ (which we live implicit). Let $\ddhat =(\sigma, \kund, \lund)$ and $x,y$ critical points as in Definition~\ref{def:mod_space}. Then,
\begin{itemize}
\item if  ${\rm vdim} \Mcal_{\ddhat}(x,y) <0$, $\Mcal_{\ddhat}(x,y) = \emptyset$,
\item if  ${\rm vdim} \Mcal_{\ddhat}(x,y)=0$, $\Mcal_{\ddhat}(x,y)$ is a compact 0-manifold, oriented relatively to $x$ and $y$.
\item if  ${\rm vdim} \Mcal_{\ddhat}(x,y)=1$, $\Mcal_{\ddhat}(x,y)$ compactifies to a piecewise-smooth 1-manifold $\overline{\Mcal}_{\ddhat}(x,y)$, oriented relatively to $x,y$, and whose boundary is given by:
\ea\label{eq:bdary_mod_spaces}
\partial (\overline{\Mcal}_{\ddhat}(x,y)) &= \coprod_{x'} (-1)^{} \cdot \Mcal_{\partial}(x,x') \times \Mcal_{\ddhat}(x',y) \\
&\sqcup \coprod_{y'}   (-1)^{\dim J_{\dd}} \cdot \Mcal_{\ddhat}(x,y')\times \Mcal_{\partial}(y',y) \nonumber\\
&\sqcup \coprod_{i=1}^{m-1}(-1)^{i} \cdot  \Mcal_{\partial_i \ddhat}(x,y) \nonumber\\
&\sqcup \coprod_{\begin{subarray}{c}   \ddhat = \ddhat^1 \sharp \ddhat^0 \\ z \end{subarray}} (-1)^{\rho^{0,1}} \cdot  \Mcal_{\ddhat^0}(x,z)\times \Mcal_{\ddhat^1}(z,y) , \nonumber
\ea
Where we recall that $\partial_i \ddhat = (\partial_i \sigma, \kund, \lund)$; 
 $x',y',z$ are critical points in the appropriate products of monoids, and of the appropriate Morse indices (so that the moduli spaces appearing all have virtual dimension zero).
\end{itemize}
Furthermore,

\begin{itemize}
\item If $\dim \sigma = 0$, $\lund = \One_b$, and $\kund = (1, \ldots 1, k_i, 1, \ldots 1)$, decomposing $x$ and $y$ into their components corresponding to the $i$-th part of $\kund$ and the vertical part of $\kund$:
\ea
x' \in \left( \left( \Crit f_{\sigma_0}  \right)^b \right)^{a-1} ,  & \ \ \  y' \in \left( \left( \Crit f_{\sigma_0}  \right)^b \right)^{a-1} ,  \\
x'' \in \left( \left( \Crit f_{\sigma_0}  \right)^b \right)^{k_i} ,  & \ \ \  y'' \in \left(  \Crit f_{\sigma_0}  \right)^b  ;
\ea
One has:
\e\label{eq:mod_spaces_atilde_1}
\Mcal_{\ddhat}(x,y) = \begin{cases} (\Mcal_{\sigma_0})^{k_i}_{\One_b}(x'', y'') &\text{if } x'=y', \\
\emptyset &\text{otherwise.} \end{cases}
\e
Likewise exchanging the roles of $\kund$ and $\lund$.

\item If $\dim \sigma = 0$, $\lund = \One_b$, and $\tilde{a} \geq 2$, then 
\e\label{eq:Mcal_deg_emptyset}
\Mcal_{\ddhat}(x,y) =  \emptyset .
\e
Likewise exchanging the roles of $\kund$ and $\lund$.
\end{itemize}
\end{prop}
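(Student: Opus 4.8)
The plan is to deduce everything from the identification $\Mcal_{\ddhat}(x,y)\simeq\Gamma(\Phi)\cap(U_{\widehat{x}}\times S_y)$ established above, where $\Phi\colon J_{\ddhat}\times X\to Y$ is the piecewise-smooth ``read the graph'' map, smooth on the complement of the transition hyperplanes. By regularity of $V$ (Definition~\ref{def:V_regular}), the intersection is transverse on each stratum, so $\Mcal_{\ddhat}(x,y)$ is a piecewise-smooth manifold of dimension ${\rm vdim}$; in particular it is empty when ${\rm vdim}<0$. For ${\rm vdim}=0,1$ the relative orientation is the one induced on $\Gamma(\Phi)\cap(U_{\widehat{x}}\times S_y)$ by a choice of orientation $o_x$ of $U_x$ (together with the fixed orientation of $U_o\simeq J_{\ddhat}$) and $o_y$ of $U_y$, exactly as in the Morse pushforward construction of Section~\ref{ssec:Morse_background}; reversing $o_x$ or $o_y$ reverses it. A crossing of a transition hyperplane is an \emph{interior} point of the moduli space: since $\Phi$ is continuous there --- by associativity of $m$, coassociativity of $\Delta$, the group-level Hopf relation, and the homomorphism and comultiplicativity properties of the $\phi_{ij}$ listed before Definition~\ref{def:V_regular} --- a one-dimensional component simply passes from one smooth stratum to the next.

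Compactness and the boundary formula follow from the standard compactness theorem for spaces of broken grafted flowgraphs (cf.\ \cite{AudinDamian,FloerHoferSalamon}) together with a gluing theorem, both made possible by the perturbation conditions of Section~\ref{ssec:Perturbations}. A sequence in $\Mcal_{\ddhat}(x,y)$ has a subsequence that either converges inside $\Mcal_{\ddhat}(x,y)$ or degenerates, and the possible degenerations are: (i) the parameter $p$ reaching the boundary of the partial compactification $\left(\overline{J}_{\ddhat}\right)_{\leq 1}$ of Section~\ref{sec:higher_bimultipl}, which by Remark~\ref{rem:J_over_cube} means some $L_i\to 0$ (producing a point of $\Mcal_{\partial_i\ddhat}(x,y)$), some $L_i\to+\infty$, or a vertex or grafting height escaping to $\pm\infty$ --- the latter two producing a splitting $\ddhat=\ddhat^1\sharp\ddhat^0$; and (ii) Morse breaking of a semi-infinite flowline at the top (in $G_{\sigma_m}$) or the bottom (in $G_{\sigma_0}$), the Breaking condition forcing the flow to be unperturbed away from vertices and grafting levels so that no interior breaking occurs. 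When ${\rm vdim}=0$ each degenerate configuration lies in a moduli space of virtual dimension $\leq -1$, hence empty by regularity, so $\Mcal_{\ddhat}(x,y)$ is already compact. When ${\rm vdim}=1$ the gluing theorem --- whose hypotheses are exactly the Breaking, Shrinking and Spacing conditions, which make the moduli space a genuine fibre product near each boundary stratum --- shows that every element of the products appearing in~\eqref{eq:bdary_mod_spaces} arises as a limit with a half-neighbourhood of boundary attached, and that there are no other limits; hence $\overline{\Mcal}_{\ddhat}(x,y)$ is a compact piecewise-smooth $1$-manifold with the stated boundary. The signs are then bookkeeping: $\partial_i J_{\ddhat}\simeq(-1)^i J_{\partial_i\ddhat}$ carries the boundary orientation of the face $\{L_i=0\}$ of the cube $\overline{I}^{\,n-1}$; each splitting term carries the orientation $(-1)^{\rho^{0,1}}$ of the gluing map of Lemma~\ref{prop:or_glueing_maps}; top breaking carries a fixed sign, while bottom breaking carries the extra $(-1)^{\dim J_{\ddhat}}$ coming from commuting the factor $\Mcal_{\partial}(y',y)$ past the parameter factor $J_{\ddhat}$, exactly as $\partial$ acts on $CM_*(\widetilde{X}_n)$ in Section~\ref{sec:informal_outline}.

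For the last two items, assume $\dim\sigma=0$ and $\lund=\One_b$, so the descending forest is $b$ bi-infinite vertical lines and the Henriques intersection graph is $\Gamma(U,D)\simeq\coprod_{j=1}^b\langle U\rangle$. If $\kund=(1,\ldots,1,k_i,1,\ldots,1)$, then $\langle U\rangle$ consists of the $k_i$-leaved tree together with $a-1$ bi-infinite edges, and a direct inspection of the construction of $\Gcal(0)^{\kund}_{\lund}$ shows that $\Phi_p$ splits, up to reordering factors, as the product of the $b$ copies of the tree-multiplication map defining $(\Mcal_{\sigma_0})^{k_i}_{\One_b}$ with, on each of the $b(a-1)$ bi-infinite edges (which carry no vertices, so that the Breaking condition forces $V=V_{\sigma_0}$ there), the diagonal-type pushforward, whose moduli space is one point if the two corresponding critical-point components coincide and is empty otherwise. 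Since the parameter spaces $J(0)^{\kund}_{\lund}$ and $J(0)^{(k_i)}_{\One_b}$ have the same dimension (the vertical trees being rigid), and the Deletion condition makes $V$ respect the splitting, this yields~\eqref{eq:mod_spaces_atilde_1} with its orientation, and transversality of $\Mcal_{\ddhat}(x,y)$ reduces to that of $(\Mcal_{\sigma_0})^{k_i}_{\One_b}(x'',y'')$; the case exchanging $\kund$ and $\lund$ is symmetric. If instead $\tilde{a}\geq 2$, then $\tilde{b}=0$ gives $c(\kund,\lund)=\tilde{a}\geq 2\neq 1$, so by definition $J(0)^{\kund}_{\lund}=\emptyset$ and hence $\Mcal_{\ddhat}(x,y)=\emptyset$, which is~\eqref{eq:Mcal_deg_emptyset}.

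The step I expect to be the main obstacle is the ${\rm vdim}=1$ analysis: establishing the gluing theorem in this piecewise-smooth, multi-level, forest-indexed setting --- so that each degeneration type contributes exactly a half-neighbourhood of boundary, with no stratum missing or spurious --- and, interlaced with it, checking that the orientation conventions on $J_{\ddhat}=I^{\,n-1}\times J^{\kund}_{\lund}$, the relative orientations of the Morse moduli spaces, and the gluing sign $\rho^{0,1}$ of Lemma~\ref{prop:or_glueing_maps} combine to give precisely the signs displayed in~\eqref{eq:bdary_mod_spaces}.
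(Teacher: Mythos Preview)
Your argument is largely parallel to the paper's, but there is one genuine gap in the boundary analysis of the ${\rm vdim}=1$ case. When you write that ``a vertex or grafting height escaping to $\pm\infty$ \dots\ produces a splitting $\ddhat=\ddhat^1\sharp\ddhat^0$'' and that ``there are no other limits'', you are implicitly assuming that every geometric limit of this kind corresponds to a term actually appearing in the last sum of~\eqref{eq:bdary_mod_spaces}. But that sum only ranges over splittings with $J_{\ddhat^f}\neq\emptyset$ for both $f=0,1$, i.e.\ with $\dim\sigma^f\geq 1$ or $c(\kund^f,\lund^f)=1$. A priori a sequence could limit to a broken configuration whose top or bottom floor has $\dim\sigma^f=0$, $\lund^f=\One_{b^f}$ and $\tilde a^f\geq 2$ (or the symmetric case). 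Then $J_{\ddhat^f}=\emptyset$ by definition, so there is no corresponding term in the formula; the fact that $\Mcal_{\ddhat^f}=\emptyset$ is, as the paper puts it, a tautology --- what is \emph{not} a tautology is that such configurations do not arise as limits.

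The paper closes this gap with an extra dimension count: decompose the offending floor $\gamma^f$ along its $\tilde a^f$ nonvertical trees, obtaining a point in a product $\Mcal_{\ddhat^f_1}\times\cdots\times\Mcal_{\ddhat^f_{\tilde a^f}}$; since $J_{\ddhat^f_1}\times\cdots\times J_{\ddhat^f_{\tilde a^f}}\simeq (J^{\kund^f}_{\lund^f}/\rr)/\rr^{\tilde a^f-1}$, this product has virtual dimension $1-\tilde a^f<0$ and is empty by regularity. (The same decomposition, for $\tilde a^f=1$, also yields~\eqref{eq:mod_spaces_atilde_1}: the vertical part must be constant.) You have all the ingredients for this argument --- indeed your derivation of~\eqref{eq:mod_spaces_atilde_1} uses essentially the same splitting idea --- but you have not applied it to rule out these spurious boundary limits, so the $\subset$ direction of~\eqref{eq:bdary_mod_spaces} is incomplete as written. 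Note also that the gluing theorem, which you invoke for ``no other limits'', runs in the opposite direction (from broken to unbroken); it establishes $\supset$, not $\subset$.
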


\begin{proof} The statements for ${\rm vdim} \Mcal_{\ddhat}(x,y) \leq 0$, as well as piecewise smoothness and relative orientability when ${\rm vdim} \Mcal_{\ddhat}(x,y) =1$; follow from the fact that $V$ is regular.

(\ref{eq:Mcal_deg_emptyset}) is a tautology since in that case we defined $\Gcal = \emptyset$. What is not a tautology is that there is no corresponding contributions in the boundary equation (\ref{eq:bdary_mod_spaces}), as we explain below.

In (\ref{eq:bdary_mod_spaces}), the inclusion $ \partial (\overline{\Mcal}_{\ddhat}(x,y)) \subset RHS$ follows from the compactness result on flowlines, and codimension considerations: breaking in negative codimension doesn't occur since $V$ is regular. In particular, if either $\ddhat^{0}$ or $\ddhat^{1}$ is of the form of (\ref{eq:Mcal_deg_emptyset}), the corresponding moduli space has extra symmetries, and projects to a moduli space of negative dimension, which must be empty. 

More specifically, assume for example that $\ddhat^{0} = ([\sigma_0],\kund, \One_b)$, with $\tilde{a}\geq 1$, 
 and that a sequence $\gamma_n$ tends to a broken grafted flowgraph $(\gamma^0, \gamma^1)$, with $\gamma^0$ of type $\ddhat^{0}$.

On the one hand, for $\gamma^0$ and $\gamma^1$ to exist, they must both have virtual dimension zero. 
On the other hand, decompose $\gamma^0$ in its vertical part ($k_i=1$), and parts corresponding to each nonvertical tree of $\kund$:
\e
\gamma^0 = \gamma_{\rm vert} \sqcup \gamma_1 \sqcup \cdots \sqcup\gamma_{\tilde{a}}.
\e
This gives a point in the product moduli space
\e
 (\gamma_1 , \ldots , \gamma_{\tilde{a}}) \in \Mcal_{\ddhat_1}(x_1 , y_1) \times \cdots \times \Mcal_{\ddhat_{\tilde{a}}}(x_{\tilde{a}} , y_{\tilde{a}}),
\e
and this product has virtual dimension $1-{\tilde{a}}$, since $J_{\dd_1} \times \cdots \times J_{\dd_{\tilde{a}}}  \simeq \left(J^{\kund^0}_{\lund^0} /\rr\right)/\rr^{1-{\tilde{a}}} $. Therefore, if ${\tilde{a}}\geq 2$, it is empty, and such breaking does not occur. If ${\tilde{a}}=1$, the vertical part $\gamma_{\rm vert}$ must be constant, which gives (\ref{eq:mod_spaces_atilde_1}).

Finally, the inclusion $ \partial (\overline{\Mcal}_{\ddhat}(x,y)) \supset RHS$ in (\ref{eq:bdary_mod_spaces}) is the standard gluing argument: broken graphs can be glued, and the pre-gluing map on moduli spaces is defined using the abstract gluing maps $g_{\dd^0, \dd^1}$ constructed in Section~\ref{sec:higher_bimultipl}. This also shows that the orientation of $g_{\dd^0, \dd^1}$ corresponds to  the orientations as boundary components.
\end{proof}

\begin{prop}\label{prop:mod_space_deletion}
Assume that $\ddhat = ([\sigma_0],\kund,\lund)$ is as in the setting of (\ref{eq:D_kund_i_lund}): $\lund$ is almost vertical,  $k_i = 1$, and  $\widehat{\kund} = (k_1, \ldots , \widehat{k_i} , \ldots, k_a)$. 
Decompose $x$ to $\widehat{x}$ corresponding to $\widehat{\kund}$, and to $x_1, \ldots , x_b$ for the vertical tree at position $i$. 
Similarly, decompose $y$ to $\widehat{y}$ and  $y_1, \ldots , y_b$. Then,
\e
\Mcal_{\ddhat}(x,y) \simeq (\Mcal_{\sigma})^{\widehat{\kund}}_{\lund} (\widehat{x},\widehat{y}) \times (\Mcal_{\sigma})^{1}_{l_1} (x_1,y_1) \times \cdots \times  (\Mcal_{\sigma})^{1}_{l_b} (x_b,y_b) .
\e

Likewise, if $\ddhat = ([\sigma_0],\kund,\lund)$ is as in  (\ref{eq:D_kund_lund_i}), then with analogous decompositions of $x,y$:
\e
\Mcal_{\ddhat}(x,y) \simeq (\Mcal_{\sigma})^{\kund}_{\widehat{\lund}} (\widehat{x},\widehat{y}) \times (\Mcal_{\sigma})^{k_1}_{1} (x_1,y_1) \times \cdots \times  (\Mcal_{\sigma})^{k_a}_{1} (x_a,y_a) .
\e
\end{prop}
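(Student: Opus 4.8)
The strategy is to mimic the last part of the proof of Proposition~\ref{prop:descr_bdary_mod_spaces}: the deletion condition imposed on $V$ forces a grafted flowgraph of type $\ddhat$ to split, componentwise, into grafted flowgraphs of the smaller types appearing in~(\ref{eq:Gcal_del_asc_vertic_tree}). First I would record the decomposition of the domain. Since $k_i=1$, the $i$-th tree of the ascending forest is a single infinite edge with no vertices; deleting it changes neither the vertex counts $v(\cdot)$ nor the symmetry dimension $c(\cdot,\cdot)$, so the projection $p\mapsto\widehat p$ is a bijection $J_\ddhat\simeq J_\sigma(0)^{\widehat\kund}_\lund$. Because $\lund$ is almost vertical, the part of the Henriques intersection graph lying over the $i$-th column has no internal edges and no vertices of valence $>3$; concretely, over each $p$ the tautological grafted graph splits as a disjoint union
\[
(\Gcal_\ddhat)_p \;\simeq\; \big(\Gcal_\sigma(0)^{\widehat\kund}_\lund\big)_{\widehat p}\ \sqcup\ \coprod_{j=1}^{b}\big(\Gcal_\sigma(0)^1_{l_j}\big)_{p},
\]
where the $j$-th summand is a line when $l_j=1$ and a ``Y'' (three half-lines meeting at a descending vertex) when $l_j=2$. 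This is precisely the decomposition underlying the map~(\ref{eq:Gcal_del_asc_vertic_tree}), and under it the input and output data decompose as $x=(\widehat x,x_1,\dots,x_b)$ and $y=(\widehat y,y_1,\dots,y_b)$, as in the statement.

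Next I would split flowgraphs. Given $(p,\gamma)\in\Mcal_\ddhat(x,y)$, each defining condition of Definition~\ref{def:mod_space} — the flow equation, multiplicativity at ascending vertices, coincidence at descending vertices, and, after folding the intervals, the limit conditions — is local to a connected component of $(\Gcal_\ddhat)_p$; moreover, by the deletion condition, $V$ on $(\Gcal_\ddhat)_p$ is pulled back along~(\ref{eq:Gcal_del_asc_vertic_tree}). Hence the restriction of $\gamma$ to $\big(\Gcal_\sigma(0)^{\widehat\kund}_\lund\big)_{\widehat p}$ is a grafted flowgraph for $V$ of type $([\sigma_0],\widehat\kund,\lund)$, i.e.\ an element of $(\Mcal_\sigma)^{\widehat\kund}_\lund(\widehat x,\widehat y)$, while its restriction to $\big(\Gcal_\sigma(0)^1_{l_j}\big)_p$ is a grafted flowgraph of type $([\sigma_0],1,l_j)$, i.e.\ an element of $(\Mcal_\sigma)^1_{l_j}(x_j,y_j)$. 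This defines a map from $\Mcal_\ddhat(x,y)$ to the product. Conversely, since $(\Gcal_\ddhat)_p$ is literally the disjoint union of these pieces, a tuple of flowgraphs on the pieces (over a common parameter $\widehat p$, which determines $p$) reassembles into a flowgraph on $(\Gcal_\ddhat)_p$: the flow equation holds again by the deletion condition, the vertex conditions are checked componentwise, and the folded limits are the concatenation of the individual ones. This is inverse to the previous map.

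The resulting bijection is the identity on flowgraph data and the canonical identification $J_\ddhat\simeq J_\sigma(0)^{\widehat\kund}_\lund$ on the parameter; in particular, when $V$ is regular it is a diffeomorphism of transversely cut out manifolds, and transversality of $\Mcal_\ddhat(x,y)$ is inherited from that of the factors — this is the situation alluded to in the third observation of the proof of Lemma~\ref{lem:reg_extention}. Since the vertical pieces carry no moduli, the relative orientation of the product coincides with that of $(\Mcal_\sigma)^{\widehat\kund}_\lund(\widehat x,\widehat y)$, and matching it with the relative orientation of $\Mcal_\ddhat(x,y)$ under $J_\ddhat\simeq J_\sigma(0)^{\widehat\kund}_\lund$ is immediate. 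This establishes the first displayed isomorphism. The second, for $\kund$ almost vertical with $l_i=1$, follows verbatim after exchanging the roles of the ascending and descending forests, using~(\ref{eq:Gcal_del_desc_vertic_tree}) and the dual half of the deletion condition.

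The only place the hypotheses do genuine work — and the point one must check most carefully — is the claim that the grafted-flowgraph conditions decouple along the decomposition of $(\Gcal_\ddhat)_p$, with nothing linking the deleted vertical ascending tree to its neighbours beyond what is already recorded in the summands $\big(\Gcal_\sigma(0)^1_{l_j}\big)_p$. This is exactly where one uses that $\lund$ (resp.\ $\kund$) is almost vertical, and it is why the deletion condition was built into the definition of $\Xfrak(S)$ in the first place; everything else is bookkeeping already carried out for Proposition~\ref{prop:descr_bdary_mod_spaces}.
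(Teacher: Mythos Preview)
Your argument is correct and follows essentially the same route as the paper: identify $J(0)^{\kund}_{\lund}\simeq J(0)^{\widehat{\kund}}_{\lund}$ (since removing a vertical ascending tree changes neither $v(\cdot)$ nor $c(\cdot,\cdot)$ when $\lund$ is almost vertical), decompose the tautological graph and hence each flowgraph along~(\ref{eq:Gcal_del_asc_vertic_tree}), invoke the deletion condition on $V$, and reassemble for the inverse. The paper's proof is terser and adds a brief virtual-dimension remark to justify that each piece lands in the correct zero-dimensional moduli space, but your component-by-component verification of the defining conditions achieves the same thing more explicitly.
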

\begin{proof}
The key observation here is that since $\lund$ is almost vertical, one has 
\e
J(0)^{\kund}_{\lund} \simeq J(0)^{\widehat{\kund}}_{\lund} \simeq J(0)^{\widehat{\kund}}_{\lund} \times  J(0)^{1}_{l_1} \times \cdots \times  J(0)^{1}_{l_b}.
\e

Let $\gamma \in \Mcal_{\ddhat}(x,y)$, and decompose it as $\gamma = \widehat{\gamma} \sqcup \gamma_1 \sqcup \cdots \sqcup \gamma_b$. From the fact that the total dimension is zero, and from the above observation, it follows that $ \widehat{\gamma}$, $ \gamma_1$, ..., $\gamma_b$ all have virtual dimension zero, therefore they define a point in $(\Mcal_{\sigma})^{\widehat{\kund}}_{\lund} (\widehat{x},\widehat{y}) \times (\Mcal_{\sigma})^{1}_{l_1} (x_1,y_1) \times \cdots \times  (\Mcal_{\sigma})^{1}_{l_b} (x_b,y_b)$, since the pseudo-gradient $V$ satisfies the appropriate condition. Conversely, by forming the union, a point in $(\Mcal_{\sigma})^{\widehat{\kund}}_{\lund} (\widehat{x},\widehat{y}) \times (\Mcal_{\sigma})^{1}_{l_1} (x_1,y_1) \times \cdots \times  (\Mcal_{\sigma})^{1}_{l_b} (x_b,y_b)$ defines a point in $\Mcal_{\ddhat}(x,y)$.

\end{proof}

\begin{prop}\label{prop:mod_space_W}(Identities for simplification relations W) Assume $\sigma = [\sigma_0, \sigma_1]$ and $\lund = \One_b$. 
Decompose $x \in  in (\ddhat)$ to $x_1, \ldots, x_a$ and $y \in  out (\ddhat)$ to $y_1, \ldots, y_a$, with:
\ea
x_i \in \left( \left( \Crit f_{\sigma_0}  \right)^b \right)^{k_i} ,  & \ \ \  y_i \in \left( \Crit f_{\sigma_1}  \right)^b    .
\ea
Then one has:
\e
(\Mcal_{\sigma})^{\kund}_{\One_b} (x,y) \simeq (\Mcal_{\sigma})^{k_1}_{\One_b} (x_1,y_1) \times \cdots \times  (\Mcal_{\sigma})^{k_a}_{\One_b} (x_a,y_a) .
\e
\end{prop}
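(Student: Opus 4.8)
The plan is to follow the proof of Proposition~\ref{prop:mod_space_deletion} essentially verbatim, the point being that a vertical descending forest forces a product decomposition of both the parameter space and its tautological grafted graph. Since $\lund=\One_b$ one has $v(\lund)=0$ and $D_{\One_b}$ is a single point, so $J_\sigma(1)^{\kund}_{\One_b}$ is just the moduli space $U^{\kund}$ of ascending forests and hence splits tree by tree; this is the identification recalled just before (\ref{eq:Gcal_W_lvertic}), namely $J_\sigma(1)^{\kund}_{\One_b}\simeq J_\sigma(1)^{k_1}_{\One_b}\times\cdots\times J_\sigma(1)^{k_a}_{\One_b}$. Over a point $p=(p_1,\dots,p_a)$ the associated Henriques intersection graph is correspondingly a disjoint union $(\Gcal_\sigma(1)^{\kund}_{\One_b})_p\simeq\coprod_i(\Gcal_\sigma(1)^{k_i}_{\One_b})_{p_i}$, compatibly with the single grafting level at height $0$ and with the height functions; this is precisely the map (\ref{eq:Gcal_W_lvertic}).

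First I would set up the decomposition. Given $(p,\gamma)\in(\Mcal_\sigma)^{\kund}_{\One_b}(x,y)$, restrict $\gamma$ to each piece $(\Gcal_\sigma(1)^{k_i}_{\One_b})_{p_i}$ to obtain $\gamma_i$. Condition~W says that $V$ commutes with (\ref{eq:Gcal_W_lvertic}), so each $\gamma_i$ is again a flowline of $V$ on its domain; the level condition, the grafting condition, and the multiplicative and coincidence conditions at ascending (resp. descending) vertices all involve only points lying on a single tree, hence localize to the $\gamma_i$; and folding the intervals near $\pm\infty$ shows that the asymptote $x$ (resp. $y$) of $\gamma$ is the tuple of asymptotes $x_i$ (resp. $y_i$) of the $\gamma_i$. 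Thus $(p_i,\gamma_i)\in(\Mcal_\sigma)^{k_i}_{\One_b}(x_i,y_i)$, and conversely the disjoint union of any tuple of such data is an element of $(\Mcal_\sigma)^{\kund}_{\One_b}(x,y)$; the two assignments are mutually inverse. The virtual dimensions agree because $v(\kund)=\abs{\kund}-a=\sum_i(k_i-1)=\sum_i v(k_i)$, and, as already noted in the proof of Lemma~\ref{lem:reg_extention}, regularity of $V$ on the factors forces regularity of the product; so when ${\rm vdim}\leq 1$ both sides are honest (piecewise-smooth) manifolds of the expected dimension.

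It remains to check that the bijection respects relative orientations, and this is the one step I expect to need genuine care. Here I would unwind the orientation convention: under the identification $i^{\kund}_{\One_b}$ of $J^{\kund}_{\One_b}$ with $\rr^{\Vert(\kund)}$, the set $\Vert(\kund)$ is the ordered disjoint union of the consecutive blocks $\Vert(k_i)$, so the product identification $\rr^{\Vert(\kund)}\simeq\prod_i\rr^{\Vert(k_i)}$ is orientation-preserving, while all the $\heartsuit$- and Koszul-type signs that could otherwise intervene vanish because $\lund$ is vertical, exactly as in the computation in the proof of Proposition-Definition~\ref{propdef:fBialg}. Confirming that the product orientation assembled from the $i^{k_i}_{\One_b}$ coincides with the one induced by $i^{\kund}_{\One_b}$, with no residual sign, is the only nonroutine point; everything else is a direct transcription of the domain-decomposition arguments already used for Proposition~\ref{prop:mod_space_deletion} and for the degenerate cases (\ref{eq:mod_spaces_atilde_1})--(\ref{eq:Mcal_deg_emptyset}) of Proposition~\ref{prop:descr_bdary_mod_spaces}.
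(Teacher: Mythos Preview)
Your proposal is correct and follows essentially the same approach as the paper: the key observation is the product decomposition $J(1)^{\kund}_{\One_b}\simeq\prod_i J(1)^{k_i}_{\One_b}$, and the bijection is given by decomposing the $1$-grafted graph according to $\kund$, exactly as in Proposition~\ref{prop:mod_space_deletion}. Your treatment is in fact more thorough than the paper's, which does not spell out the orientation check you carry out in your final paragraph.
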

\begin{proof}Here, the key observation is that
\e
J(1)^{\kund}_{\One_b} \simeq J(1)^{k_1}_{\One_b} \times \cdots \times  J(1)^{k_a}_{\One_b}.
\e
The proof is then similar to Proposition~\ref{prop:mod_space_deletion}: the bijection is given by decomposing the 1-grafted graph according to $\kund$.
\end{proof}

\subsection{The functor from $\LieMon$}
\label{ssec:fun_from_LieMon}

Let us define the simplicial map $\LieMonpert \to \fBialg$. Let $(S,V)$ be an $n$-simplex in $\LieMonpert$. 
For $\ddhat$ and $x,y$ as in Definition~\ref{def:mod_space}, by Proposition~\ref{prop:descr_bdary_mod_spaces}, one can consider the normalization 
\e
\abs{\Mcal_{\ddhat}(x,y)}_R \colon \abs{x}_R \to \abs{y}_R .
\e
\begin{theo}\label{th:fun_LieMon_final}
The assignment 
\ea
(G,f,V) &\mapsto CM_*(G) ,\\
(S,V) &\mapsto \lbrace \varphi_{\ddhat} \rbrace;
\ea
with:
\e
\varphi_{\ddhat} = \sum_{\ind(y) = \ind(x) +\dim J_{\dd}} \abs{\Mcal_{\ddhat}(x,y)}_R \colon CM_*(G_{\sigma_m}) \to CM_*(G_{\sigma_m}) .
\e
defines a simplicial map $\LieMonpert \to \fBialg$.
\end{theo}
\begin{proof}
From (\ref{eq:bdary_mod_spaces}), it follows that $\varphi_{\ddhat}$ satisfies the coherence relation (\ref{eq:cohrel_R_ddhat}), i.e. this defines a simplicial map to $\PreBialg$. From (\ref{eq:mod_spaces_atilde_1}), (\ref{eq:Mcal_deg_emptyset}) and Propositions~\ref{prop:mod_space_deletion} and \ref{prop:mod_space_W}, it follows that this maps lands in $\fBialg$.
\end{proof}

\subsection{Proof of the Corollaries}
\label{ssec:proof_cor}

\begin{proof}[Proof of Corollary~\ref{cor:Manact_to_uBimodact}] Recall that one has a functor $\Manact \to \LieMon$ given by (\ref{eq:Manact_to_LieMon}). One can therefore compose this functor with the one of Theorem~\ref{th:LieMon_to_fBialg}. For an $n$-simplex in the nerve $N(\Manact)$
\e
\left\lbrace  T_i = (G_i, X_i, H_i)\ ,\ \Phi_{i(i+1)} = (\varphi_{i(i+1)}, f_{i(i+1)}, \psi_{i(i+1)}) \right\rbrace,
\e
we obtain a collection of operations, with $\sigma = [\sigma_0, \ldots, \sigma_d ]$:
\ea
(\alpha_\sigma)^{\kund}_{\lund} &\colon (A_{\sigma_d}^{b})^{\abs{\kund}} \to (A_{\sigma_0}^{\abs{\lund}})^{a}\text{, with} \\
A_{i} &= CM_*(G_i) \oplus CM_*(X_i) \oplus CM_*(H_i) \oplus CM_*(pt). \label{eq:Ai_decomp}
\ea

Observe now that the operations $(\mu_\sigma)^{\kundb}_{\lund}$ one wants to get correspond to blocks of $(\alpha_\sigma)^{\kund}_{\lund}$  w.r.t. the decomposition (\ref{eq:Ai_decomp}).
\end{proof}

\begin{proof}[Proof of Corollary~\ref{cor:fun_Man_Kom}] Now embed $\Man \hookrightarrow \Manact$ via $X \mapsto (1,X,1)$, and apply the functor of Corollary~\ref{cor:Manact_to_uBimodact}. For any $n$-simplex in $N(\Man)$, one gets operations of the form $(\mu_\sigma)^{\kundb}_{\lund}$. These operations for $\kundb = (0|1|0)$ correspond to the $\infty$-functor of Corollary~\ref{cor:fun_Man_Kom}.
\end{proof}

\section{Relation with gauge theory and Floer theory}
\label{sec:rel_other_appr}

We now put our results in a broader perspective. They fit in the following diagram, which we will explain in the next subsections.

\begin{center}
\begin{tikzcd}%[row sep=small]
& \color{blue} \Man \ar[r, blue, "CM_*"] \ar[d, blue]\ar[drr, blue, dashrightarrow]
 & \color{blue}\fCoalg \ar[d, blue, crossing over] \ar[drr, blue, dashrightarrow]
  & \Lie \Gcal r \ar[r, "CM_*"] \ar[d, crossing over]
   & \fBialg \ar[d]
    & (\infty,1) \\
\color{blue}\Cob_{2..4} \ar[r, blue, "\Don_{2..4}"] \ar[drr, blue, dashrightarrow] 
& \color{blue}\Symp  \ar[r, blue, "\Fuk"] \ar[drr, blue, dashrightarrow]
 & \color{blue}{A_\infty}\text{-}\mathcal{C}at \ar[drr, blue, dashrightarrow]\ar[from=ll, blue, crossing over, bend right=30, ""']  
  & \Lier \ar[r, "CM_*"] \ar[d, crossing over]
   & \uBimod \ar[d]
    & (\infty,2) \\
&
 &\Cob_{1..4} \ar[r, "\Don_{1..4}"] \ar[rr, bend right=30, ""']
  & \Ham \ar[r, "\Fuk"]
   & \dCat
    & (\infty,3) 
\end{tikzcd}
\end{center}

\subsection{Categorifications and extended TFT}
\label{ssec:higher_ext_extended_TFT}

The functors of Theorem~\ref{th:LieMon_to_fBialg} (restricted to Lie groups) and Corollary\ref{cor:fun_Man} correspond to the first line of the above diagram. Both should be part of a common (partial) $(\infty,2)$-functor ($CM_*$ on the second line) that should categorify the functor of Corollary~\ref{cor:Manact_to_uBimodact}, as explained below. And this categorification should be seen as a Morse counterpart to a Fukaya category $(\infty,3)$-functor ($\Fuk$ on the third line).

In \cite{ham} we introduced two ``partial'' 2-categories $\Lier$ and $\Ham$, which are 2-categorical analogs of the Moore-Tachikawa category \cite{MooreTachikawa}, respectively in the non-symplectic and in the real symplectic setting. 
Oversimplifying, these include elementary diagrams as below:

\begin{center}
\begin{tikzcd} 
& \Lier \arrow[rr, "T^*"]  &  & \Ham & \\
G \arrow[r, bend left=50, "X", ""{name=U, below}]
\arrow[r, bend right=50, "Y"{below},  ""{name=D}]
& H 
\arrow[Rightarrow, from=U, to=D, "C"] 
& \mapsto
&^G \arrow[r, bend left=50, "M", ""{name=U, below}]
\arrow[r, bend right=50, "N"{below},  ""{name=D}]
& H ,
\arrow[Rightarrow, from=U, to=D, "L"] 
\end{tikzcd}
\end{center}
\begin{itemize}
\item In both $\Lier$ and $\Ham$, objects are compact Lie groups.
\item 1-morphisms from $G$ to $H$ in $\Lier$ are compact smooth manifold with a $G\times H$-action; and in $\Ham$ 
are symplectic manifolds equipped with a Hamiltonian action of $G\times H$.
\item 2-morphisms in $\Lier$ are $(G\times H)$-invariant correspondences (i.e. subsets $C\subset X\times Y$); and 2-morphisms in $\Ham$ are $(G\times H)$-equivariant Lagrangian correspondences $L\subset M^- \times N$.

\item 1-morphisms compose via quotienting (resp. symplectic reduction) by the diagonal action:
\begin{center}
\begin{tikzcd}
G_0  \ar[r, "X_{01}"]\ar[rr, bend right=25,"(X_{01}\times X_{12})/{G_1}"'] 
& G_1 \ar[r, "X_{12}"]
 & G_2 
  & 
   & G_0  \ar[r, "M_{01}"]\ar[rr, bend right=25,"(M_{01}\times M_{12})\red G_1"'] 
    & G_1 \ar[r, "M_{12}"]
     & G_2 .
\end{tikzcd}
\end{center}

\item 2-morphisms compose as correspondences.
\end{itemize}

The category of compact Lie groups $\Lie \Gcal r$ embeds in $\Lier$; and $\Lier$ embeds in $\Ham$ via a ``cotangent 2-functor'' $T^* \colon \Lier \to \Ham$ that sends objects, 1-morphisms and 2-morphisms respectively to themselves, their cotangent bundle, and their conormal bundle.

Moreover, $\Man$ embeds in the endomorphisms of the trivial group $End_{\Lier}(1)$, which we indicate as a dotted arrow \begin{tikzcd}[cramped, sep=small] 
\Man \ar[r, dashrightarrow]
  & \Lier
\end{tikzcd}. Therefore, $\Lier$ contains both $\Man$ and $\Lie$.

Furthermore, $\Lier$ categorifies $\Manact$, in the sense that objects and 1-morphisms of $\Manact$ can be seen respectively as 1-morphisms and 2-morphisms in $\Lier$. We expect that $\uBimodact$ can be categorified in a similar way to an $(\infty,2)$-category $\uBimod$, which should contain both $\fCoalg$ and $\fBialg$, and should be the target of a functor from $\Lier$ that categorifies the functor of Corollary~\ref{cor:Manact_to_uBimodact}.

Moving down to the third line, $f$-bialgebras, $u$-bimodules and their morphisms have categorical counterparts that we introduced in \cite[Sec.~3.6,~3.7]{biass}; in the same way than the \Ainf -(co)category $\Fuk(M)$ is a categorical counterpart of the \Ainf-coalgebra $CM_*(X)$. We believe these counterpart should form (some sort of) $(\infty,3)$-category $\dCat$; and that an $(\infty,3)$-functor, whose effect on simple diagrams is described by \cite[Conj.~B]{biass} (or a wrapped variant from \cite[Conj.~C]{biass}), should exist.

In \cite{ham} we constructed a (partial, quasi-) 2-functor $\Don_{1..3}\colon \Cob_{1..3} \to \Ham$\footnote{We are oversimplifying here, it takes values in a ``completion'' $\Hamhat$, see \cite{ham} for more details.}. We believe it should extend to dimension four, and composing it with the above $(\infty,3)$-functor should produce an extended TFT that should contain (equivariant versions of) Instanton homology the Donaldson polynomials.

\subsection{Relation with work of Wehrheim, Woodward, Ma'u and Bottman}
\label{ssec:Rel_MWWB}

What we described above is an extension down to dimension 1 of \WW's ``Floer Field theory'' \cite{WWfft,Wehrheimphilo}. They suggest that Weinstein's symplectic category $\Symp$ could be seen as a 2-category, with Lagrangian Floer  homology groups as 2-morphism spaces. Following the Atiyah-Floer conjecture, they suggest there should be an extended TFT in dimensions 2..4 with values in this 2-category, corresponding to Donaldson's invariants and Instanton homology in dimensions 4 and 3 respectively.

In particular, they show that Lagrangian correspondences induce functors between Donaldson categories. In \cite{MauWehrheimWoodward}, as a chain-level counterpart, they show it induces an \Ainf -functor between (some variation of) Fukaya categories. They suggest that $\Symp$ should form some sort of 2-category, with Lagrangian Floer chain groups as 2-morphisms. Such a structure, referred to as $(A_{\infty},2)$-category, is currently under construction \cite{BottmanCarmeli,Bottman_flowcat}.

The starting point of these constructions is the use of quilted discs as in the right side of Figure~\ref{fig:quilts_multiplihedra}, which realize the multiplihedron \cite{MauWoodwardrealiz}, and can be used to define \Ainf -functors. Bottman considers generalizations of those, called witch balls, in order to define the 2-associahedron \cite{Bottman_2_ass}, which dictates the algebraic structure of $(A_{\infty},2)$-categories.

Our approach is a priori different: the natural counterpart of our grafted trees are quilts as in the left of Figure~\ref{fig:quilts_multiplihedra}, and are different than those of \cite{MauWehrheimWoodward}; even though they both realize the multiplihedron. However, the corresponding generalization, i.e. $J(n)_{\lund}$, is combinatorially different from Bottman's 2-associahedron. It would be interesting to compare the resulting algebraic structures: are they equivalent, in a certain sense?

\begin{figure}[!h]
    \centering
    \def\svgwidth{.80\textwidth}
    %% Creator: Inkscape 1.2.2 (b0a8486541, 2022-12-01), www.inkscape.org
%% PDF/EPS/PS + LaTeX output extension by Johan Engelen, 2010
%% Accompanies image file '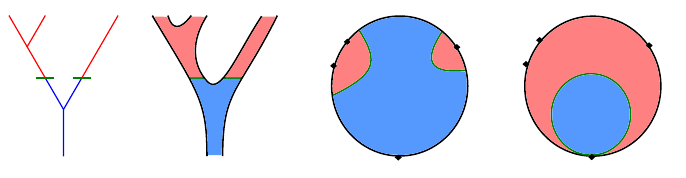' (pdf, eps, ps)
%%
%% To include the image in your LaTeX document, write
%%   \input{<filename>.pdf_tex}
%%  instead of
%%   \includegraphics{<filename>.pdf}
%% To scale the image, write
%%   \def\svgwidth{<desired width>}
%%   \input{<filename>.pdf_tex}
%%  instead of
%%   \includegraphics[width=<desired width>]{<filename>.pdf}
%%
%% Images with a different path to the parent latex file can
%% be accessed with the `import' package (which may need to be
%% installed) using
%%   \usepackage{import}
%% in the preamble, and then including the image with
%%   \import{<path to file>}{<filename>.pdf_tex}
%% Alternatively, one can specify
%%   \graphicspath{{<path to file>/}}
%% 
%% For more information, please see info/svg-inkscape on CTAN:
%%   http://tug.ctan.org/tex-archive/info/svg-inkscape
%%
\begingroup%
  \makeatletter%
  \providecommand\color[2][]{%
    \errmessage{(Inkscape) Color is used for the text in Inkscape, but the package 'color.sty' is not loaded}%
    \renewcommand\color[2][]{}%
  }%
  \providecommand\transparent[1]{%
    \errmessage{(Inkscape) Transparency is used (non-zero) for the text in Inkscape, but the package 'transparent.sty' is not loaded}%
    \renewcommand\transparent[1]{}%
  }%
  \providecommand\rotatebox[2]{#2}%
  \newcommand*\fsize{\dimexpr\f@size pt\relax}%
  \newcommand*\lineheight[1]{\fontsize{\fsize}{#1\fsize}\selectfont}%
  \ifx\svgwidth\undefined%
    \setlength{\unitlength}{328.81889764bp}%
    \ifx\svgscale\undefined%
      \relax%
    \else%
      \setlength{\unitlength}{\unitlength * \real{\svgscale}}%
    \fi%
  \else%
    \setlength{\unitlength}{\svgwidth}%
  \fi%
  \global\let\svgwidth\undefined%
  \global\let\svgscale\undefined%
  \makeatother%
  \begin{picture}(1,0.25862069)%
    \lineheight{1}%
    \setlength\tabcolsep{0pt}%
    \put(0,0){\includegraphics[width=\unitlength,page=1]{quilts_multiplihedra.pdf}}%
    \put(0.17168799,0.11246753){\color[rgb]{0,0,0}\makebox(0,0)[lt]{\lineheight{1.25}\smash{\begin{tabular}[t]{l}$\to$\end{tabular}}}}%
    \put(0.39259647,0.11232799){\color[rgb]{0,0,0}\makebox(0,0)[lt]{\lineheight{1.25}\smash{\begin{tabular}[t]{l}$\to$\end{tabular}}}}%
    \put(0.70214755,0.11267479){\color[rgb]{0,0,0}\makebox(0,0)[lt]{\lineheight{1.25}\smash{\begin{tabular}[t]{l}$\neq$\end{tabular}}}}%
  \end{picture}%
\endgroup%

      \caption{From left to right: a grafted tree, its quilted counterpart, and the same quilt in the disc model; which is different from the quilted discs in \cite{MauWoodwardrealiz}.}
      \label{fig:quilts_multiplihedra}
\end{figure}

Let us also mention Fukaya's work \cite{Fukaya_functor}, that should informally be seen as a 2-functor $\Symp \to {A_\infty}\text{-}\mathcal{C}at$. Fukaya's construct is more indirect in nature, and relies on a systematic use of the Yoneda embedding. It is unclear to us how it relates to our construction.

%\begin{figure}[!h]
%    \centering
%    \def\svgwidth{.50\textwidth}
%    \input{.pdf_tex}
%      \caption{.}
%      \label{fig:}
%\end{figure}

%\begin{defi}\label{def:} 
%
%\end{defi}

\bibliographystyle{alpha}
\bibliography{biblio}

@article {FloerHoferSalamon,
    AUTHOR = {Floer, Andreas and Hofer, Helmut and Salamon, Dietmar},
     TITLE = {Transversality in elliptic {M}orse theory for the symplectic
              action},
   JOURNAL = {Duke Math. J.},
  FJOURNAL = {Duke Mathematical Journal},
    VOLUME = {80},
      YEAR = {1995},
    NUMBER = {1},
     PAGES = {251--292},
      ISSN = {0012-7094},
   MRCLASS = {58E05 (57R70 58F05)},
  MRNUMBER = {1360618},
MRREVIEWER = {Karl Friedrich Siburg},
       DOI = {10.1215/S0012-7094-95-08010-7},
       URL = {https://doi.org/10.1215/S0012-7094-95-08010-7},
}

@article {PilaudPoliakova_Hochschild,
    AUTHOR = {Pilaud, Vincent and Poliakova, Daria},
     TITLE = {Hochschild polytopes},
   JOURNAL = {S\'em. Lothar. Combin.},
  FJOURNAL = {S\'eminaire Lotharingien de Combinatoire},
    VOLUME = {91B},
      YEAR = {2024},
     PAGES = {Art. 1, 12},
      ISSN = {1286-4889},
   MRCLASS = {52B05 (05E10)},
  MRNUMBER = {4818633},
}

@article{abouzaid2011topological,
  title={A topological model for the {F}ukaya categories of plumbings},
  author={Abouzaid, Mohammed},
  journal={Journal of Differential Geometry},
  volume={87},
  number={1},
  pages={1--80},
  year={2011},
  publisher={Lehigh University}
}

@incollection {HinichSchechtman,
    AUTHOR = {Hinich, V. A. and Schechtman, V. V.},
     TITLE = {On homotopy limit of homotopy algebras},
 BOOKTITLE = {{$K$}-theory, arithmetic and geometry ({M}oscow, 1984--1986)},
    SERIES = {Lecture Notes in Math.},
    VOLUME = {1289},
     PAGES = {240--264},
 PUBLISHER = {Springer, Berlin},
      YEAR = {1987},
      ISBN = {3-540-18571-2},
   MRCLASS = {55U35 (18D35 19D99)},
  MRNUMBER = {923138},
MRREVIEWER = {V.\ P.\ Snaith},
       DOI = {10.1007/BFb0078370},
       URL = {https://doi.org/10.1007/BFb0078370},
}

@article {Bottman_flowcat,
    AUTHOR = {{Bottman}, Nathaniel},
     TITLE = {On the architecture of the {S}ymplectic $({A}_\infty,2)$-{C}ategory},
      year={2024},
   JOURNAL = {arXiv:2412.18993},
}

@misc{LurieHA,
  title={Higher algebra},
  author={Lurie, Jacob},
  year={2017}
}

@book{seidel2008fukaya,
  title={Fukaya categories and Picard-Lefschetz theory},
  author={Seidel, Paul},
  volume={10},
  year={2008},
  publisher={European Mathematical Society}
}

@book {Mescher_book,
    AUTHOR = {Mescher, Stephan},
     TITLE = {Perturbed gradient flow trees and {$A_\infty$}-algebra
              structures in {M}orse cohomology},
    SERIES = {Atlantis Studies in Dynamical Systems},
    VOLUME = {6},
 PUBLISHER = {Atlantis Press, [Paris]; Springer, Cham},
      YEAR = {2018},
     PAGES = {xxv+171},
      ISBN = {978-3-319-76583-9; 978-3-319-76584-6},
   MRCLASS = {58E05 (55P43 57R70)},
  MRNUMBER = {3791518},
       DOI = {10.1007/978-3-319-76584-6},
       URL = {https://doi.org/10.1007/978-3-319-76584-6},
}

@article {FukayaOh_zeroloop,
    AUTHOR = {Fukaya, Kenji and Oh, Yong-Geun},
     TITLE = {Zero-loop open strings in the cotangent bundle and {M}orse
              homotopy},
   JOURNAL = {Asian J. Math.},
  FJOURNAL = {Asian Journal of Mathematics},
    VOLUME = {1},
      YEAR = {1997},
    NUMBER = {1},
     PAGES = {96--180},
      ISSN = {1093-6106,1945-0036},
   MRCLASS = {58E05 (58D29 58F09)},
  MRNUMBER = {1480992},
MRREVIEWER = {Joa\ Weber},
       DOI = {10.4310/AJM.1997.v1.n1.a5},
       URL = {https://doi.org/10.4310/AJM.1997.v1.n1.a5},
}

@article{Fukaya_functor,
      title={Unobstructed immersed {L}agrangian correspondence and filtered {A} infinity functor}, 
      author={Kenji Fukaya},
      year={2017},
      journal={arXiv:1706.02131},
}

@article {MauWoodwardrealiz,
    AUTHOR = {Ma'u, Sikimeti and Woodward, Christopher},
     TITLE = {Geometric realizations of the multiplihedra},
   JOURNAL = {Compos. Math.},
  FJOURNAL = {Compositio Mathematica},
    VOLUME = {146},
      YEAR = {2010},
    NUMBER = {4},
     PAGES = {1002--1028},
      ISSN = {0010-437X,1570-5846},
   MRCLASS = {18D50 (14H10 52B20)},
  MRNUMBER = {2660682},
MRREVIEWER = {Mirroslav\ Tzanov\ Yotov},
       DOI = {10.1112/S0010437X0900462X},
       URL = {https://doi.org/10.1112/S0010437X0900462X},
}

@article {Bottman_2_ass,
    AUTHOR = {Bottman, Nathaniel},
     TITLE = {2-associahedra},
   JOURNAL = {Algebr. Geom. Topol.},
  FJOURNAL = {Algebraic \& Geometric Topology},
    VOLUME = {19},
      YEAR = {2019},
    NUMBER = {2},
     PAGES = {743--806},
      ISSN = {1472-2747,1472-2739},
   MRCLASS = {53D37 (52B12)},
  MRNUMBER = {3924177},
       DOI = {10.2140/agt.2019.19.743},
       URL = {https://doi.org/10.2140/agt.2019.19.743},
}

@article{OhTanaka_quotients_localizations,
      title={Infinity-categorical universal properties of quotients and localizations of {A}-infinity-categories}, 
      author={Yong-Geun Oh and Hiro Lee Tanaka},
      year={2024},
      journal={arXiv:2003.05806}, 
}

@article{OhTanaka_actions,
      title={Continuous and coherent actions on wrapped {F}ukaya categories}, 
      author={Yong-Geun Oh and Hiro Lee Tanaka},
      year={2020},
      journal={arXiv:1911.00349},
}

@article {MauWehrheimWoodward,
    AUTHOR = {Ma'u, Sikimeti and Wehrheim, Katrin and Woodward, Chris T.},
     TITLE = {{$A_\infty$} functors for {L}agrangian correspondences},
   JOURNAL = {Selecta Math. (N.S.)},
  FJOURNAL = {Selecta Mathematica. New Series},
    VOLUME = {24},
      YEAR = {2018},
    NUMBER = {3},
     PAGES = {1913--2002},
      ISSN = {1022-1824},
   MRCLASS = {53D40},
  MRNUMBER = {3816496},
MRREVIEWER = {Michael J. Usher},
       DOI = {10.1007/s00029-018-0403-5},
       URL = {https://doi.org/10.1007/s00029-018-0403-5},
}

@inproceedings {Fukaya_htpy,
    AUTHOR = {Fukaya, Kenji},
     TITLE = {Morse homotopy, {$A^\infty$}-category, and {F}loer homologies},
 BOOKTITLE = {Proceedings of {GARC} {W}orkshop on {G}eometry and {T}opology
              '93 ({S}eoul, 1993)},
    SERIES = {Lecture Notes Ser.},
    VOLUME = {18},
     PAGES = {1--102},
 PUBLISHER = {Seoul Nat. Univ., Seoul},
      YEAR = {1993},
   MRCLASS = {57R57 (55N35 57N10 58E05)},
  MRNUMBER = {1270931},
MRREVIEWER = {Karl Friedrich Siburg},
}

@article {BottmanCarmeli,
    AUTHOR = {Bottman, Nathaniel and Carmeli, Shachar},
     TITLE = {{$(A_\infty,2)$}-categories and relative 2-operads},
   JOURNAL = {High. Struct.},
  FJOURNAL = {Higher Structures},
    VOLUME = {5},
      YEAR = {2021},
    NUMBER = {1},
     PAGES = {401--421},
   MRCLASS = {18N65 (18M60 18M75 18N10)},
  MRNUMBER = {4367226},
}

@article{biass,
      title={Bialgebras, and {L}ie monoid actions in {M}orse and {F}loer theory, {I}}, 
      author={Guillem Cazassus and Alexander Hock and Thibaut Mazuir},
      year={2024},
      journal={arXiv:2410.16225},
}

@inproceedings{MooreTachikawa,
  title={On 2d {TQFT}s whose values are holomorphic symplectic varieties},
  author={Moore, Gregory W and Tachikawa, Yuji},
  booktitle={Proc. Symp. Pure Math},
  volume={85},
  pages={191},
  year={2012}
}

@inproceedings {Teleman_icm,
    AUTHOR = {Teleman, Constantin},
     TITLE = {Gauge theory and mirror symmetry},
 BOOKTITLE = {Proceedings of the {I}nternational {C}ongress of
              {M}athematicians---{S}eoul 2014. {V}ol. {II}},
     PAGES = {1309--1332},
 PUBLISHER = {Kyung Moon Sa, Seoul},
      YEAR = {2014},
   MRCLASS = {57R56 (18D05 55N91)},
  MRNUMBER = {3728663},
MRREVIEWER = {Ramses Fernandez-Valencia},
}

@article {WWfft,
    AUTHOR = {Wehrheim, Katrin and Woodward, Chris},
     TITLE = {Floer field theory for coprime rank and degree},
   JOURNAL = {Indiana Univ. Math. J.},
  FJOURNAL = {Indiana University Mathematics Journal},
    VOLUME = {69},
      YEAR = {2020},
    NUMBER = {6},
     PAGES = {2035--2088},
      ISSN = {0022-2518,1943-5258},
   MRCLASS = {53D40 (18F99 32G15 53D37)},
  MRNUMBER = {4170087},
MRREVIEWER = {Alexander\ Fel\cprime shtyn},
       DOI = {10.1512/iumj.2020.69.8018},
       URL = {https://doi.org/10.1512/iumj.2020.69.8018},
}

@preamble{
   "\def\cprime{$'$} "
}

@incollection {Wehrheimphilo,
    AUTHOR = {Wehrheim, Katrin},
     TITLE = {Floer field philosophy},
 BOOKTITLE = {Advances in the mathematical sciences},
    SERIES = {Assoc. Women Math. Ser.},
    VOLUME = {6},
     PAGES = {3--90},
 PUBLISHER = {Springer, [Cham]},
      YEAR = {2016},
      ISBN = {978-3-319-34139-2; 978-3-319-34137-8},
   MRCLASS = {57R56 (57R57 57R58 81P05)},
  MRNUMBER = {3654490},
       DOI = {10.1007/978-3-319-34139-2\_1},
       URL = {https://doi.org/10.1007/978-3-319-34139-2_1},
}

@article {ham,
    AUTHOR = {Cazassus, Guillem},
     TITLE = {A two-category of {H}amiltonian manifolds, and a {$(1+1+1)$}
              field theory},
   JOURNAL = {Indiana Univ. Math. J.},
  FJOURNAL = {Indiana University Mathematics Journal},
    VOLUME = {72},
      YEAR = {2023},
    NUMBER = {5},
     PAGES = {2101--2144},
      ISSN = {0022-2518,1943-5258},
   MRCLASS = {57K41 (18N20 57R56 57R58 81T45)},
  MRNUMBER = {4671895},
MRREVIEWER = {Wenmin\ Gong},
}

@article {equiv,
    AUTHOR = {Cazassus, Guillem},
     TITLE = {Equivariant {L}agrangian {F}loer homology via cotangent bundles of ${EG}_{N}$},
journal = {Journal of Topology},
volume = {17},
number = {1},
pages = {e12328},
doi = {https://doi.org/10.1112/topo.12328},
url = {https://londmathsoc.onlinelibrary.wiley.com/doi/abs/10.1112/topo.12328},
}

@book {AudinDamian,
    AUTHOR = {Audin, Mich\`ele and Damian, Mihai},
     TITLE = {Morse theory and {F}loer homology},
    SERIES = {Universitext},
      NOTE = {Translated from the 2010 French original by Reinie Ern\'e},
 PUBLISHER = {Springer, London; EDP Sciences, Les Ulis},
      YEAR = {2014},
     PAGES = {xiv+596},
      ISBN = {978-1-4471-5495-2; 978-1-4471-5496-9; 978-2-7598-0704-8},
   MRCLASS = {53-02 (53D40 58E05)},
  MRNUMBER = {3155456},
MRREVIEWER = {Sonja\ Hohloch},
       DOI = {10.1007/978-1-4471-5496-9},
       URL = {https://doi.org/10.1007/978-1-4471-5496-9},
}

@incollection {GrothShortCourse,
    AUTHOR = {Groth, Moritz},
     TITLE = {A short course on {$\infty$}-categories},
 BOOKTITLE = {Handbook of homotopy theory},
    SERIES = {CRC Press/Chapman Hall Handb. Math. Ser.},
     PAGES = {549--617},
 PUBLISHER = {CRC Press, Boca Raton, FL},
      YEAR = {2020},
      ISBN = {978-0-815-36970-7},
   MRCLASS = {18Nxx (55U35 55U40)},
  MRNUMBER = {4197994},
}

@article{Mazuir1,
      title={Higher algebra of ${A}_\infty$ and ${\Omega}{BA}s$-algebras in {M}orse theory {I}}, 
      author={Thibaut Mazuir},
      year={2022},
      journal={arXiv:2102.06654},
}

@article{Mazuir2,
      title={Higher algebra of ${A}_\infty$ and ${\Omega}{BA}s$-algebras in {M}orse theory {II}}, 
      author={Thibaut Mazuir},
      year={2022},
      journal={arXiv:2102.08996},
}

@article{PascaleffSibilla,
      title={Speculations on higher {F}ukaya categories}, 
      author={James Pascaleff and Nicolò Sibilla},
      year={2025},
      journal={arXiv:2503.20906},
}

@book {KMbook,
    AUTHOR = {Kronheimer, Peter and Mrowka, Tomasz},
     TITLE = {Monopoles and three-manifolds},
    SERIES = {New Mathematical Monographs},
    VOLUME = {10},
 PUBLISHER = {Cambridge University Press},
   ADDRESS = {Cambridge},
      YEAR = {2007},
     PAGES = {xii+796},
      ISBN = {978-0-521-88022-0},
   MRCLASS = {57R57 (53C27 57N10 57R57 57R58)},
  MRNUMBER = {2388043 (2009f:57049)},
MRREVIEWER = {Vicente Mu{\~n}oz},
       DOI = {10.1017/CBO9780511543111},
       URL = {http://dx.doi.org/10.1017/CBO9780511543111},
}

@article {HLSlie,
    AUTHOR = {Hendricks, Kristen and Lipshitz, Robert and Sarkar, Sucharit},
     TITLE = {A simplicial construction of {$G$}-equivariant {F}loer
              homology},
   JOURNAL = {Proc. Lond. Math. Soc. (3)},
  FJOURNAL = {Proceedings of the London Mathematical Society. Third Series},
    VOLUME = {121},
      YEAR = {2020},
    NUMBER = {6},
     PAGES = {1798--1866},
      ISSN = {0024-6115,1460-244X},
   MRCLASS = {53D40 (57R58)},
  MRNUMBER = {4201124},
MRREVIEWER = {Jun\ Zhang},
       DOI = {10.1112/plms.12385},
       URL = {https://doi.org/10.1112/plms.12385},
}

@article {HLSfinite,
    AUTHOR = {Hendricks, Kristen and Lipshitz, Robert and Sarkar, Sucharit},
     TITLE = {A flexible construction of equivariant {F}loer homology and
              applications},
   JOURNAL = {J. Topol.},
  FJOURNAL = {Journal of Topology},
    VOLUME = {9},
      YEAR = {2016},
    NUMBER = {4},
     PAGES = {1153--1236},
      ISSN = {1753-8416,1753-8424},
   MRCLASS = {57R58 (53D40 57R91)},
  MRNUMBER = {3620455},
MRREVIEWER = {Jianfeng\ Lin},
       DOI = {10.1112/jtopol/jtw022},
       URL = {https://doi.org/10.1112/jtopol/jtw022},
}

\end{document}